\DeclareSymbolFontAlphabet{\mathbb}{AMSb}
\DeclareSymbolFontAlphabet{\mathbbl}{bbold}
\DeclareMathAlphabet{\mathpzc}{OT1}{pzc}{m}{it}
 \newcommand{\scal}[1]{\left\langle #1 \right\rangle}
 \newcommand{\sett}[1]{\left\{   #1   \right\}}
 \newcommand{\norm}[1]{\left\|   #1   \right\|}
 \newcommand{\abso}[1]{\left|   #1   \right|}
 \newcommand{\paar}[1]{\left( #1 \right)}
 \newcommand{\fd}{\mathfrak{D}}
 \newcommand{\fh}{\mathfrak{H}}
 \newcommand{\fp}{\mathfrak{P}}
\newcommand{\vertiii}[1]{{\left\vert\kern-0.25ex\left\vert\kern-0.25ex\left\vert #1 
		\right\vert\kern-0.25ex\right\vert\kern-0.25ex\right\vert}}
\numberwithin{equation}{section}
\newtheorem{theorem}{\quad Theorem}[section]
\newtheorem{lemma}[theorem]{\quad Lemma}
\newtheorem{corollary}[theorem]{\quad Corollary}
\newtheorem{remark}[theorem]{\quad Remark}
\newtheorem{proposition}[theorem]{\quad Proposition}
\newcommand{\nii}{\mathscr{I}}
\newcommand{\ps}{\mathpzc{k}}
\newcommand{\lt}{{L^2(\mathbb{R}^2)}}
\newcommand{\dd}{\;{\rm d}}
\newcommand{\dz}{\;{\rm d}x{\rm d}y}
\newcommand{\ff}{\varphi}
\newcommand{\ee}{{\rm e}}
\newcommand{\ii}{{\rm i}}
\newcommand{\la}{\langle}
\newcommand{\ra}{\rangle}
\newcommand{\N}{\mathbb{N}}
\newcommand{\e}{\varepsilon}
\newcommand{\lam}{\lambda}
\newcommand{\x}{{X_\alpha}}
\newcommand{\xx}{{\dot{X}_\alpha}}
\newcommand{\what}{\widehat}
\newcommand{\nd}{{\partial_x^{-1}}}
\newcommand{\dx}{D_x^{\alpha}}
\newcommand{\mo}{\mu_1}  \newcommand{\po}{{p_1}}
\newcommand{\moo}{\mu_2}  \newcommand{\poo}{{p_2}}
\newcommand{\vr}{{\varrho}}
\newcommand{\al}{\alpha}
\newcommand{\rr}{\mathbb{R}}
\newcommand{\rt}{{\mathbb{R}^2}}
\newcommand{\rrr}{{R_{b,d}}}
\title{On the  Kadomtsev–Petviashvili equation  with double-power  	nonlinearities}  
\author[1]{Amin Esfahani$^\ast$}
\author[2]{Steve Levandosky}
\author[3,4]{Gulcin M. Muslu}
\affil[1]{Department of Mathematics, Nazarbayev University, Astana 010000, Kazakhstan}
\affil[2]{Mathematics and Computer Science Department, College of the Holy Cross, Worcester, MA 01610, USA }
\affil[3]{Istanbul Technical University, Department of Mathematics, Maslak 34469, Istanbul, Turkey}
\affil[4]{Istanbul Medipol University, School of Engineering and Natural Sciences, Beykoz 34810, Istanbul, Turkey}
\date{}
\begin{document}
	\maketitle
	\begin{abstract}
		\let\thefootnote\relax\footnotetext{Amin Esfahani (saesfahani@gmail.com, amin.esfahani@nu.edu.kz)}
		\let\thefootnote\relax\footnotetext{Steve Levandosky (slevando@holycross.edu)}
		\let\thefootnote\relax\footnotetext{Gulcin M. Muslu (gulcin@itu.edu.tr,
			gulcinmihriye.muslu@medipol.edu.tr)}
		
		\let\thefootnote\relax\footnotetext{$^\ast$Corresponding Author}
		
	In this paper, we delve into the study of the generalized KP equation, which incorporates double-power nonlinearities. Our investigation covers various aspects, including the existence of solitary waves, their nonlinear stability, and instability. Notably, we address a broader class of nonlinearities represented by $\mu_1|u|^{p_1-1}u+\mu_2|u|^{p_2-1}u$, with $p_1>p_2$, encompassing cases where $\mu_1>0$ and $\mu_1<0<\mu_2$.
		One of the distinct features of our work is the absence of scaling, which introduces several challenges in establishing the existence of ground states. To overcome these challenges, we employ two different minimization problems, offering novel approaches to address this issue. Furthermore, our study includes a nuanced analysis to ascertain the stability of these ground states.
		Intriguingly, we extend our stability analysis to encompass cases where the convexity of the Lyapunov function is not guaranteed. This expansion of stability criteria represents a significant contribution to the field.
		Moving beyond the analysis of solitary waves, we shift our focus to the associated Cauchy problem. Here, we derive criteria that determine whether solutions exhibit finite-time blow-up or remain uniformly bounded within the energy space.
		Remarkably, our study unveils a notable gap in the existing literature, characterized by the absence of both theoretical evidence of blow-up and uniform boundedness. To explore this intriguing scenario, we employ the integrating factor method, providing a numerical investigation of solution behavior. This method distinguishes itself by offering spectral-order accuracy in space and fourth-order accuracy in time.
		Lastly, we rigorously establish the strong instability of the ground states, adding another layer of understanding to the complex dynamics inherent in the generalized KP equation. 
		
		\textbf{Keywords}:   Kadomtsev–Petviashvili equation, Solitary wave, Stability, Blow-up, Integrating factor method
		
		\textbf{MSC 2020}: 35Q53, 35C08, 37K40, 37K45  
	\end{abstract}

	
	
	
	
	\section{Introduction}
	The Kadomtsev-Petviashvili (KP) equation is a nonlinear partial differential equation of crucial importance in the study of various physical phenomena. It was originally derived by Kadomtsev and Petviashvili in their seminal work \cite{kp}. The KP equation takes the form:
	\begin{equation}\label{kp}
		(u_t + u_{xxx} + uu_x)_x + \varepsilon u_{yy} = 0, \qquad \varepsilon = \pm 1,
	\end{equation}
	This equation was initially developed to investigate the transverse stability of the solitary wave solution of the Korteweg-de Vries (KdV) equation, which is given by:
	\begin{equation}\label{kdv}
		u_t + u_x + \left(\frac13 - B\right)u_{xxx} + uu_x = 0,
	\end{equation}
	The parameter $B \geq 0$ in Equation \eqref{kdv} represents the Bond number, reflecting different surface tension effects in the context of surface hydrodynamical waves.
	
	Equation \eqref{kp}, when considered with $\varepsilon = -1$, is known as the KP-II equation. It models gravity surface waves in a shallow water channel where the water depth is less than 0.46 cm. This specific form of the KP equation is particularly relevant for the study of these waves and their behavior under varying conditions.
	On the other hand, when Equation \eqref{kp} is considered with $\varepsilon = 1$, it becomes the KP-I equation. This version of the equation describes capillary waves on the surface of liquid or oblique magneto-acoustic waves in a plasma. These capillary waves are characterized by their interactions with surface tension forces and are observed in a variety of physical scenarios.
	The KP equation, with its variations based on the value of $\varepsilon$, continues to be a fundamental tool for understanding the dynamics of various types of waves in different media. Its applications extend to fields such as fluid dynamics, plasma physics, and surface wave phenomena, making it a central equation in the study of nonlinear wave phenomena.

	When considering the effects of higher-order nonlinearity, the KdV equation transforms into the following equation:
	\begin{equation}\label{gardner}
		u_t + u_x + \left(\frac{1}{3} - B\right)u_{xxx} + uu_x - \varsigma u^2u_x = 0.
	\end{equation}
	This equation is known as the Gardner equation, which characterizes large-amplitude internal waves (see \cite{grimshaw} and references therein). The sign of the coefficient $\varsigma$ in \eqref{gardner} varies depending on the physical scenario under examination. In the context of internal waves, this polarity hinges on the stratification \cite{grimshaw}; specifically, it is consistently positive in the scenario of a two-layer fluid. Beyond its significance in plasma physics \cite{rtp, watanabe}, Equation \eqref{gardner} also emerges from asymptotic theory concerning internal waves in a two-layer liquid exhibiting a density jump at the interface \cite{ky, miles}. Remarkably, through a linear transformation, the linear term $u_x$ can be eliminated, leading to an explicit solitary wave solution:
	\begin{equation}\label{gardner solitons}
		u(x, t) = \frac{6AB_0}{1 + R\cosh\left(\sqrt{A}(x - x_0 - AB_0t)\right)},
	\end{equation}
	where $B_0 = \frac{1}{3} - B$ and $R = \pm\sqrt{1 - 6\varsigma A}$ (see \cite{gpps}).
	
	\begin{figure}[ht]
		\begin{center}
			\scalebox{0.2}{\includegraphics{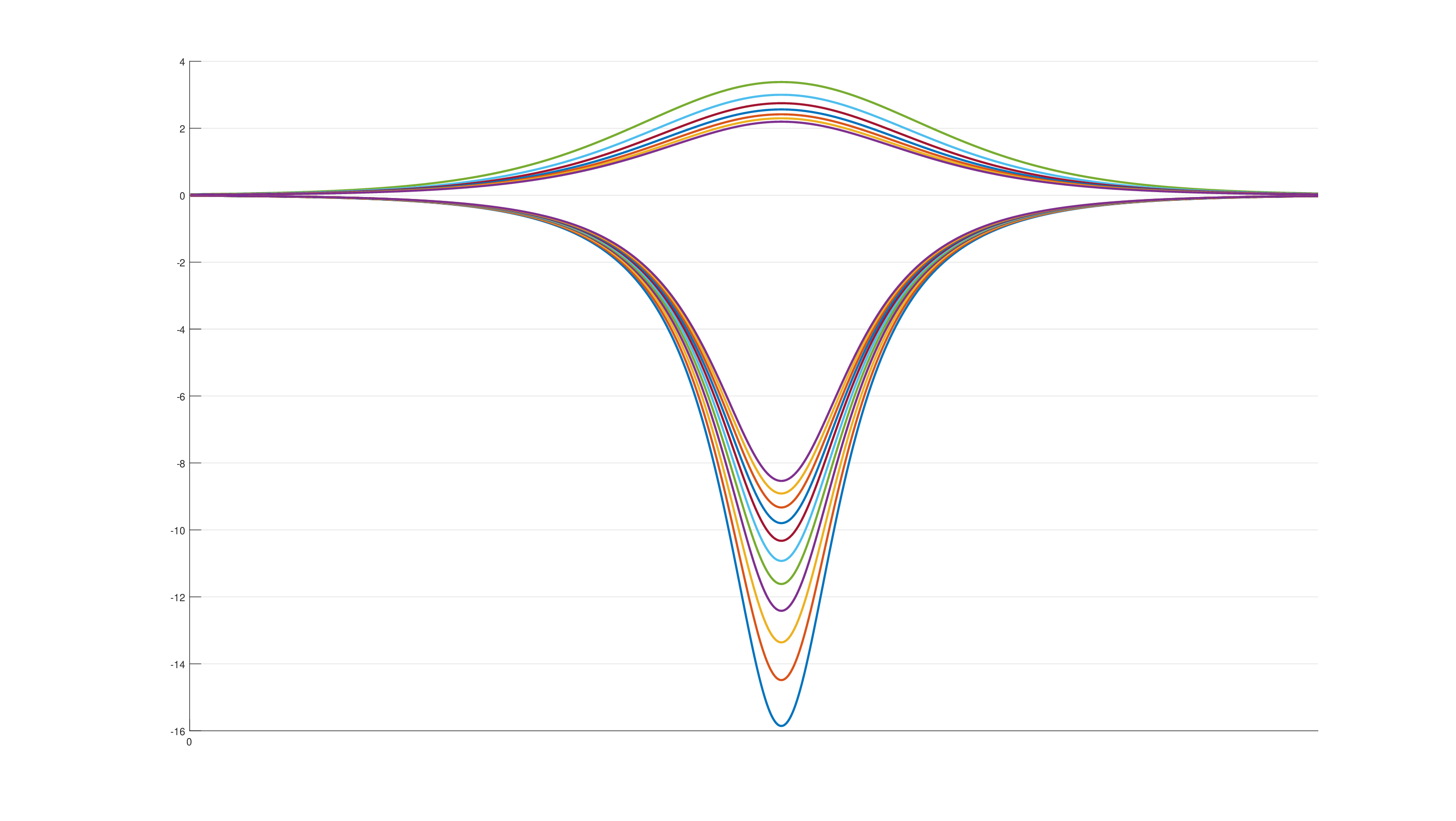}}
			\caption{Solitary waves of the Gardner equation \eqref{gardner} given by \eqref{gardner solitons} with various values of $\varsigma$. }\label{gard}
	\end{center}\end{figure}
	
	Similar to \eqref{kp}, a high-dimensional Gardner equation was derived in \cite{osb} to describe the propagation of weakly nonlinear and weakly dispersive dust ion acoustic waves in a collisionless unmagnetized plasma. This plasma consists of warm adiabatic ions, static negatively charged dust grains, nonthermal electrons, and isothermal positrons.
	
	This paper focuses on the following generalized Kadomtsev–Petviashvili (KP) equation with double-power nonlinearities \cite{osb,sbd,skt}:
	\begin{equation}\label{main-fifthkp}
		\left(u_t-D_x^{2\al} u_x+(f(u))_x\right)_x+ \e u_{yy}=0, \qquad \e=\pm1,
	\end{equation}
	where $u=u(x,y,t)$ is a real-valued function, and $f(u)=\mo f_1(u)+\moo f_2(u)$ where $\mo,\moo\in\rr$, and $f_j(u)=|u|^{p_j-1}u$ ($j=1,2$).
	
	The operator $D^{2\al}_x$ with $\al>0$ represents the Riesz potential of order $-2\al$ in the $x$-direction. It is defined by the usual Fourier multiplier operator with the symbol $|\xi|^{2\al}$. When $\al=0$, the equation \eqref{main-fifthkp} reduces to:
	\begin{equation}\label{dispersionless}
		(u_t-u_x+(f(u))_x)_x + \e u_{yy}=0,
	\end{equation}
	where the operator $\nd$ is defined via the Fourier transform $\widehat{\nd}=(i\xi)^{-1}$. This equation models the propagation of short pulses in some media and is known as the Khokhlov–Zabolotskaya (or the dispersionless) equation (see \cite{roud} and references therein).
	It is noteworthy that when $\alpha=1/2$, Equation \eqref{main-fifthkp} becomes the relevant KP version of the Benjamin-Ono equation. This version has been derived from the two-fluid system in the weakly nonlinear regime (see \cite{choicam}). It was also derived in \cite{abs,cgh} for long weakly nonlinear internal waves in stratified fluids of large depth. When $\alpha=2$, \eqref{main-fifthkp} arises as a two-dimensional model for capillary-gravity water waves in the regime of critical surface tension when the Bond number is close to the critical value $1/3$ (see \cite{abst,nonlinearity,karbel}), and can be regarded as a two-dimensional version of the Kawahara equation.
	
	Hence, equation \eqref{main-fifthkp} bears some connection with the full dispersion KP equation derived in \cite{lann-1} and discussed in \cite{lann-2} as an alternative model to KP (with fewer unphysical shortcomings) for gravity-capillary surface waves in the weakly transverse regime. It is known that the KP-I equation features Zaitsev traveling waves, which are localized in the $x$ direction and periodic in $y$. Suitable transformations of parameters produce solutions that are periodic in $x$ and localized in $y$. Due to its integrability properties, the KP-I equation possesses a localized, finite energy, explicit solitary wave called the lump \cite{mzbim}:
	\[
	\psi(x-ct,y)=
	\frac{8c\left(1-\frac c3(x-ct)^2+\frac{c^2}{3}y^2\right)}{\left(1+\frac c3(x-ct)^2+\frac{c^2}{3}y^2\right)^2}.
	\]
	
	Notice that equation \eqref{main-fifthkp} formally conserves energy (Hamiltonian) denoted as $E$, along with momentum quantities $M$ and $\mathbb{P}$, which are defined as follows:
	\begin{equation}\label{mass}
		M(u)=\frac12\|u\|_\lt^2,
	\end{equation}
	\[
	\mathbb{P}(u)=\int_\rt u(\nd u_y)\dz,
	\]
	\begin{equation}\label{energy}
		E(u)=\frac{1}{2}\int_\rt\left(|\dx u|^2-\e|\nd u_y|^2\right)\dz-K(u),
	\end{equation}
	where 
	\[
	K(u)=\mu_1K_1(u)+\mu_2K_2(u)= \int_\rt F (u)\dz=\mu_1\int_\rt F_1(u)\dz+\mu_2\int_\rt F_2(u)\dz,
	\]
	and $F_j$ for $j=1,2$ represents the primitive function of $f_j$ with $F_j(0)=0$.
	
	In this paper, our primary focus encompasses the investigation of both the existence and stability of solitary waves of \eqref{main-fifthkp}, as well as the analysis of the long-term behavior of solutions derived from \eqref{main-fifthkp}. A solitary wave is understood to be a solution in the form of $u(x,y,t)=v(x-ct,y)$. It is important to recognize that based on the concepts presented in \cite{dbs-0}, \eqref{main-fifthkp} does not exhibit nontrivial solitary wave solutions when $\e=+1$. Consequently, in the subsequent analysis, we consider \eqref{main-fifthkp} with the assumption that $\e=-1$. In this case, the solution $v$ satisfies the equation:
	\begin{equation}\label{gkp}
		\left(cv+D_x^{2\al} v- f(v) \right)_{xx}  + v_{yy}=0.
	\end{equation}
	It is worth noting that well-posedness results concerning the Cauchy problem associated with \eqref{main-fifthkp} have been established in specific scenarios. For instance, when $\mu_2=0$ and $p_1=2$, various works such as \cite{grun,lps,ylhd,sanwaschi} have provided insights into well-posedness. In the context of homogeneous nonlinearity with $\mu_2=0<\mu_1$ and $\al=1,2$, de Bouard and Saut \cite{dbs-0} introduced a minimization procedure for the associated norm of \eqref{kp}, subject to the constraint $K(u)=\lam>0$. They employed the concentration-compactness principle to establish the existence of solitary wave solutions. Through appropriate scaling of the Lagrange multiplier, they demonstrated that the minimizer satisfies \eqref{main-fifthkp}. However, in the presence of nonhomogeneous nonlinearity where $\mu_1,\mu_2\neq0$ this method becomes inapplicable due to the absence of scaling invariance. 
		To overcome this challenge, we divide our existence results into parts. In the case $\mo>0$ with $\po>\poo$, we employ the Pankov-Nehari manifold method \cite{pankov}. While a similar result can potentially be achieved using the Mountain-pass argument (as detailed in \cite[Chapter 7]{willem}) under certain constraints on $f$, our approach offers an additional advantage: the stability analysis of solitary waves can be inferred. 
	
In the case where $\mo < 0 < \moo$ with $\po > \poo$, the situation is significantly different, and the previously mentioned argument is not applicable. This is because the leading nonlinearity term, $\mo |u|^{\po-1}u$, is a positive term in the energy. To address this challenge, we introduce a new minimization problem (see \eqref{new-min-highlow}). While this new minimization problem helps establish the existence of ground states, it complicates the proof of stability. To address this challenge, we introduce a novel minimization problem (see \eqref{new-min-highlow}) that helps establish the existence of ground states, although it complicates the proof of stability. This novel approach, distinct from previous methods, is inspired by the work of Colin and Ohta \cite{colin}, allowing us to achieve another stability result without relying on the convexity condition of the Lyapunov function (see Theorem \ref{stab-theopq}).
		An important observation is that there exists a natural scaling invariance associated with \eqref{main-fifthkp} when $\mu_2=0$. More specifically, the scaling transformation
	\begin{equation}\label{scaling}
		u_\lam(x,y,t)=\lam^{\frac{2\al}{p_1-1}}u(\lam^{2\al+1}x,\lam y,\lam^{\al+1}t),\quad\lam>0
	\end{equation}
	preserves the form of \eqref{main-fifthkp}. This implies that $u_\lam$ is also a solution provided that $u$ is a solution of \eqref{main-fifthkp}. In contrast, when dealing with combined nonlinearities, no such scaling exists that leaves \eqref{main-fifthkp} invariant. This lack of invariance adds complexity to the systematic study of the Cauchy problem associated with \eqref{main-fifthkp}. A similar situation is encountered in the context of the following nonlinear Schrödinger equation with combined power-type nonlinearities:
	\begin{equation}\label{nls}
		\ii u_t+\Delta u=\mu_1 |u|^{p_1-1}u+\mu_2|u|^{p_2-1}u.
	\end{equation}
	Inspired by the comprehensive study presented by Tao et al. in \cite{tvs}, our focus in this paper is on establishing sharp criteria concerning the dichotomy of global existence versus finite-time blow-up. This includes obtaining insights into the local and global well-posedness, finite-time blow-up in weighted spaces, asymptotic behavior in both energy and weighted spaces, and the scattering versus blow-up phenomenon for specific cases of \eqref{nls}. Building upon the methodologies presented in \cite{cmz,mzz}, we aim to provide novel insights into understanding the intricate dynamics of \eqref{nls}. Drawing inspiration from these works, our aim is to develop a deeper understanding of the ground states of \eqref{main-fifthkp}, their stability, and their behavior in the face of perturbations and nonlinear interactions.

	The structure of the paper unfolds as follows: Section \ref{sect-exist} delves into the examination of the existence of ground states within the context of \eqref{main-fifthkp}. The subsequent section, Section \ref{sect-stability}, probes into the stability of these ground states. In Section \ref{sect-bound-blow}, we derive the criteria dictating whether the solutions remain uniformly bounded in the energy space or eventually blow up. The exploration of strong instability occupies Section \ref{sect-strong-inst}.
	Furthermore, in Section \ref{sect-numerical-s}, we present an intricately devised numerical approach that amalgamates the Fourier pseudo-spectral method with the integration factor method. This approach is proficiently employed to solve the generalized KP equation featuring double-power nonlinearities. The accuracy of this numerical method is meticulously monitored through both mass conservation error and Fourier coefficients. Significantly, the method exhibits a spectral-order precision in space and a fourth-order precision in time.
	The investigation then proceeds to the dynamic evolution of solutions for the generalized KP equation involving supercritical, subcritical, and critical nonlinearities. Particular attention is devoted to the analysis of a gap within Section \ref{sect-bound-blow}. This gap, wherein neither a blow-up nor a uniform boundedness outcome has been theoretically established, is a focal point of inquiry.


	\subsection*{Notation}
	We  denote the $L^2(\rr^2)$-inner product by $\langle\cdot,\cdot\rangle$.  We shall also denote by $\widehat\ff$ the Fourier transform of $\ff$, defined as
	\[
	\widehat\ff(\zeta)=\int_{\rr^2}\;\ff(z)e^{-\ii z\cdot \zeta}\;\dd z.
	\]
	For $s\in\rr$, we denote by
	$H^s\left(\rr^2\right)$, the nonhomogeneous Sobolev space defined by
	\[
	H^s\left(\rr^2\right)=\left\{\ff\in\mathscr{S}'\left(\rr^2\right)\;:\;\|\ff\|_{H^s\left(\rr^2\right)}<\infty\right\},
	\]
	where
	\[
	\|\ff\|_{H^s\left(\rr^2\right)}
	=\left\|\left(1+|\zeta|^2\right)^\frac{s}{2}\widehat{\ff}(\zeta)\right\|_{L^2\left(\rr^2\right)},
	\]
	and $\mathscr{S}'\left(\rr^2\right)$ is the space of tempered distributions. Let ${X_\al}$ be the closure of $\partial_x(C_0^\infty(\rr^2))$ for the norm
	\begin{equation}
		\|\ff_x\|_{X_\al}^2= \|\dx\ff_x\|_{L^2(\rr^2)}^2+\|\ff_y\|_{L^2(\rr^2)}^2
		+\|\ff_{x}\|_{L^2(\rr^2)}^2.
	\end{equation}
	The homogeneous space $\xx$ is defined by the norm
	\begin{equation}
		\|\ff\|_{\xx}^2= \|\dx\ff\|_{L^2(\rr^2)}^2+\|\nd\ff_y\|_{L^2(\rr^2)}^2.
	\end{equation}
	Let
	\[
	X^s=\left\{u\in H^s(\rr^2);\;\left(\xi^{-1}\what{u}(\xi,\eta)\right)^\vee\in H^s(\rr^2)\right\},
	\]
	with the norm
	\[
	\|u\|_{X^s}=\|u\|_{H^s(\rr^2)}+\left\|\left(\xi^{-1}\what{u}\right)^\vee\right\|_{H^s(\rr^2)},
	\]
	where $`\vee$' is the Fourier inverse transform.

	\section{Existence of Ground States}\label{sect-exist}
	In this section, we establish the existence of traveling wave solutions of equation \eqref{main-fifthkp}. We define the functional $I(u)$  as follows:
	\[
	I(u)=\frac{1}{2}\int_\rt\left(cu^2+|\dx u|^2+|\nd u_y|^2\right)\dz.
	\]
	A function $u$ is a weak solution of \eqref{gkp} if and only if \(I'(u) = K'(u)\), or in other words, if \(u\) is a critical point of the functional \(S = I - K\). In \cite{dbs-1}, critical points of \(S\) were shown to exist for homogeneous nonlinearities by demonstrating the existence of minimizers of \(I(u)\) under the constraint \(K(u) =\lambda > 0\). These minimizers satisfy \(I'(u) =\theta K'(u)\), where \(\theta\in\mathbb{R}\) is a Lagrange multiplier. By homogeneity of the nonlinearity, this multiplier could be scaled out to obtain a solution of \eqref{gkp}.
	
	The same approach can be applied to prove the existence of minimizers for the same variational problem involving more general nonlinearities that satisfy suitable growth conditions. However, for non-homogeneous nonlinearities, the Lagrange multiplier cannot be scaled out. In such cases, the dependence of the Lagrange multiplier \(\theta\) on the constraint parameter \(\lambda\) can become quite intricate, especially when \(\mu_1 \mu_2 < 0\). Instead, we consider an alternative constrained minimization problem. This is driven by the observation that if \(u\) is a critical point of \(S\), then \(P(u) = 0\), where
	\[
	P(u)=\langle S'(u),u\rangle=2I(u)-N(u)
	\]
	and
	\[
	N(u)=\int_{\mathbb{R}^2}\left(\mu_1 |u|^{p_1+1}+\mu_2 |u|^{p_2+1}\right)\mathrm{d}x\mathrm{d}y.
	\]
	We now seek solutions of \eqref{gkp} that minimize the action \(S\) among nontrivial solutions by solving the minimization problem
	\begin{equation}\label{minimum}
		m=\inf_{u\in N_0}S(u),
	\end{equation}
	where
	\[
	N_0=\{u\in X_\alpha,\;u\not\equiv0,\;P(u)=0\}.
	\]
	These solutions satisfy \(S'(u) =\theta P'(u)\) for some \(\theta\), which, due to the homogeneity of the constraint \(P(u) = 0\), can be demonstrated to be zero. Consequently, \(S'(u) = 0\), and $u$ is a weak solution of \eqref{gkp}. A ground state is a solution of \eqref{gkp} that minimizes the action \(S\) among all nontrivial solutions of \eqref{gkp}. The main result of this section is the following. 
	
	\begin{theorem}\label{theorem-exist}
		Let $\mo>0$ and $1<\poo<\po<2^\ast$.	Suppose that $\{u_n\}$ is a minimizing sequence  of \eqref{minimum}. Then there exists a subsequence, renamed by the same, a sequence $\{z_n\}\subset\rt$, and $u\in X_\al$ such that $u_n(\cdot-z_n)\to u$ strongly in $X_\al$, $P(u)=0$ and $S(u)=m$.
	\end{theorem}
	
	\begin{remark}
		By a slight modification, one can extend Theorem \ref{theorem-exist} to the cases $f(u)=\mu_1f_1(u)+\mu_2f_2(u)$ where $f_1$ and $f_2$ are homogeneous of degree $p_1$ and $p_2$, respectively, $p_1>p_2$, and there exist $u\in X_\al$ such that $\mu_1K_1(u)>0$. This includes for example even nonlinearities $f(u)=\mu_1|u|^{p_1}+\mu_2|u|^{p_2}$ and mixed-parity nonlinearities such as $f(u)=\mu_1|u|^{p_1}+\mu_2|u|^{p_2-1}u$ and $f(u)=\mu_1|u|^{p_1-1}u+\mu_2|u|^{p_2}$ with $\mu_1>0$. See \cite{esfahani-levan-dpde}.   
	\end{remark}
	\begin{figure}[ht]
		\begin{center}
			\scalebox{0.5}{\includegraphics{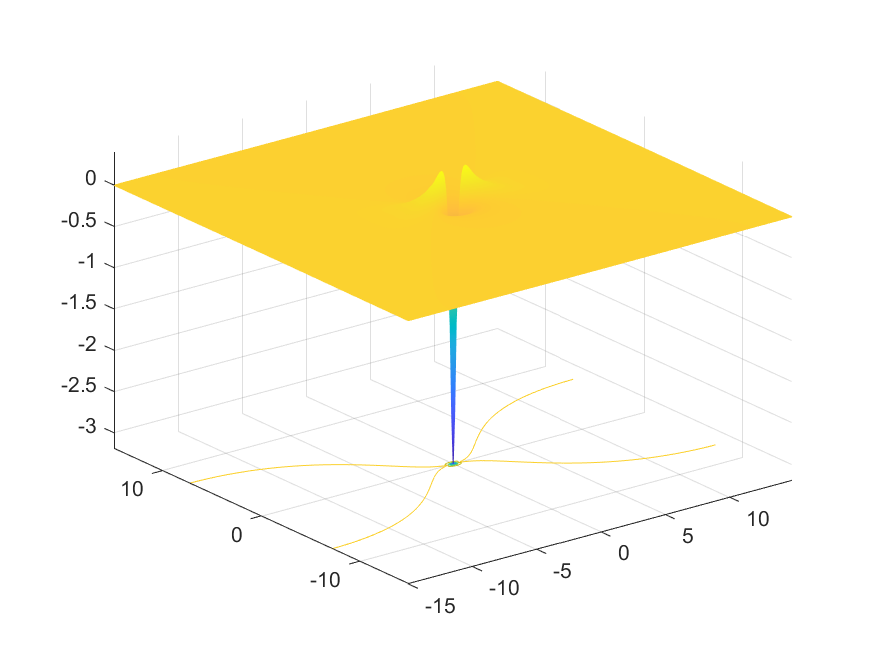}}
			\caption{Numerical solitary wave of  \eqref{main-fifthkp} with $\al=1$, $f(u)=|u|^3-2u^2$. }\label{nume-slt-1}
	\end{center}\end{figure}

	To prove Theorem \ref{theorem-exist} we first recall the well-known result from \cite{B:besov,liuwang} that provides an embedding for any \(q\in[2, 2^*]\):
	\begin{equation}\label{embed}
		X_\alpha\hookrightarrow L^q(\mathbb{R}^2),
	\end{equation}
	where
	\[
	2^*=\begin{cases}
		\infty^-&\alpha\geq2,\\
		\frac{3\alpha+2}{2-\alpha}&\alpha<2.
	\end{cases}
	\]
	Moreover the embedding 	$X_\al\hookrightarrow L^q_{\rm loc}(\rt) $ is compact if $q\in(2,2^	\ast)$.
	
	Related to \eqref{embed}, we recall  the following anisotropic Sobolev-type inequality  from the results of  Besov,   Il'in, and Nikolskii \cite{B:besov} (see also \cite[Proposition 2.2]{Borluk-Bruell-Nilsson} and \cite[Lemma 1.1]{lps})), together with an interpolation:
	\begin{equation}\label{bin}
		\|u\|_{L^{p+1}(\rr^2)}^{p+1}\leq \rho_p 
		\|u\|_\lt^{2c_p}
		\|\dx u\|_\lt^{\frac{p-1}{\al}}
		\|\nd u_y\|_\lt^{\frac{p-1}{2}},\quad u\in X_\al,
	\end{equation}
	provided
	\[
	c_p=\frac{3\al+2+p(\al-2)}{4\al}>0.
	\] 
	See also \cite[Lemma 2.1]{blt}.   
	
	Next we set
	\[
	N_\sigma=\{u\in X_\al,\;u\not\equiv0,\; P(u)=\sigma\}
	\]
	and 
	define
	\begin{equation}
		m_\sigma=\inf_{u\in N_\sigma} S_0(u),
	\end{equation}
	where\[
	S_0=S-\frac{1}{\gamma+1}P,
	\]
	$\gamma=\po$ if $\mu_2<0$, and $\gamma=\poo$ if $\moo>0$.
	Hereafter in this section, we consider the case $\moo<0$ in the proof of Theorem \ref{theorem-exist}. The argument also works for the case $\moo>0$ and is simpler. 
	\begin{lemma}\label{lem23}
		For any $\sigma\in\rr$, the set $N_\sigma$ is nonempty. Moreover, $N_\sigma$ is bounded away from zero if $\sigma\leq0$.
	\end{lemma}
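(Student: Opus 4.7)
The plan is to prove the two assertions separately: nonemptiness by combining two one-parameter deformations of a fixed reference function, and the lower bound by the Sobolev embedding \eqref{embed}.

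For nonemptiness, I will fix a nontrivial $u_0 \in \x$ (for instance $u_0=\partial_x\ff$ with $\ff\in C_0^\infty(\rt)$ chosen so that $\|\nd(u_0)_y\|_\lt>0$) and compare two scalings. Along the amplitude dilation $tu_0$,
\[
P(tu_0) = t^2 \|u_0\|_\x^2 - \mo t^{\po+1}\|u_0\|_{L^{\po+1}(\rt)}^{\po+1} - \moo t^{\poo+1}\|u_0\|_{L^{\poo+1}(\rt)}^{\poo+1},
\]
which tends to $-\infty$ as $t\to\infty$ because $\mo>0$ and $\po>\poo\geq1$ make $-\mo t^{\po+1}$ the dominant term. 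Along the anisotropic $x$-stretching $v_\lam(x,y)=u_0(x/\lam,y)$, a Fourier-side computation gives
\[
P(v_\lam) = \lam\|u_0\|_\lt^2 + \lam^{1-2\al}\|\dx u_0\|_\lt^2 + \lam^3\|\nd(u_0)_y\|_\lt^2 - \lam N(u_0),
\]
so $P(v_\lam)\to+\infty$ as $\lam\to+\infty$ thanks to the cubic $\lam^3$ term. Since $\x\setminus\{0\}$ is path-connected (being an infinite-dimensional normed space punctured at the origin) and $P$ is continuous on $\x$ (by the continuity of \eqref{embed} into $L^{\po+1}$ and $L^{\poo+1}$), the image $P(\x\setminus\{0\})$ is a connected subset of $\rr$ that is unbounded above and below, and therefore equals $\rr$. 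Hence $N_\sigma\neq\emptyset$ for every $\sigma\in\rr$.

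For the lower bound when $\sigma\leq0$, I will start from $2I(u)-N(u)=\sigma\leq0$ and use $2I(u)=\|u\|_\x^2$ to obtain $\|u\|_\x^2\leq N(u)$. Since $\moo<0$, the negative term may be dropped, and then \eqref{embed} (valid because $c_{\po}>0$ keeps $\po+1$ strictly below the critical exponent $2^\ast$) yields
\[
\|u\|_\x^2 \leq \mo\|u\|_{L^{\po+1}(\rt)}^{\po+1}\leq \mo C^{\po+1}\|u\|_\x^{\po+1},
\]
which rearranges (using $\po>1$) to the uniform bound $\|u\|_\x\geq (\mo C^{\po+1})^{-1/(\po-1)}>0$ on $N_\sigma$.

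The main obstacle is producing arbitrarily large \emph{positive} values of $P$: pure amplitude rescaling $tu_0$ only fills the interval $(-\infty,\max_{t>0}P(tu_0)]$ and cannot exceed a threshold depending on $u_0$. The essential observation is that under the $x$-stretching $v_\lam$ the $\nd\partial_y$-part of the quadratic form scales like $\lam^3$ while the nonlinearities and the $L^2$-norm scale only linearly in $\lam$, and this asymmetry is what drives $\sup P=+\infty$. The remaining ingredients (continuity of $P$ and path-connectedness of $\x\setminus\{0\}$) are routine.
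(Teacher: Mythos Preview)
Your proof is correct for the lemma as stated, but takes a different route than the paper. For nonemptiness, the paper splits into cases: for $\sigma>0$ it uses the $y$-dilation $u_C(x,y)=u(x,Cy)$ (under which $\|\nd u_y\|_\lt^2$ scales like $C$, driving $P\to+\infty$) followed by amplitude scaling down; for $\sigma\leq 0$ it first exhibits a specific function $Q$ with $N(Q)>0$ (the ground state of \eqref{homog}, via an interpolation argument) and then shows that $r\mapsto P(rQ)$ hits $\sigma$ at a \emph{unique} $r>0$. Your approach replaces all of this by a single connectedness argument: $x$-stretching gives $P\to+\infty$ (via the $\lam^3$ term, which is the same mechanism as the paper's $y$-dilation, just implemented in the other variable), amplitude scaling gives $P\to-\infty$, and the intermediate value theorem on the path-connected set $\x\setminus\{0\}$ finishes. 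This is cleaner and avoids the auxiliary ground state entirely.

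One thing to be aware of: the paper's proof establishes more than the lemma asserts, namely the uniqueness of $r$ with $P(rQ)=\sigma$ when $\sigma\leq 0$, and this uniqueness is explicitly invoked in the proof of the next lemma (the monotonicity of $m_\sigma$). Your argument does not give this; so if you adopt your proof, you would need to recover the uniqueness separately when you reach that step. For the lower bound, your simplification of dropping the $\moo$-term (since $\moo<0$) is perfectly fine and slightly sharper than the paper's version, which keeps both terms and bounds them together.
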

	\begin{proof}
		Let $\sigma\in\rr$ and fix any nonzero $u\in X_\al$. For $C\in\rr$, we define
		\[
		u_C(x,y)=u(x,Cy).
		\]
		Then it is easy to see that  
		\[
		\lim_{C\to+\infty}	P(u_C)=+\infty.
		\]
		Hence, there exists $C>0$ such that $P(u_C)>\sigma$. On the other hand, since $K_1(u_C)>0$ and $p_1>p_2>1$,
		\[
		\lim_{A\to\infty}P(Au_C)=-\infty.
		\]
		Thus, there exists $A>0$ such that $P(Au_C)=\sigma$. This proves $N_\sigma$ is not empty.
		
		Now, let $\sigma\leq0$ and $u\in N_\sigma$. Then, we have from \eqref{embed} that
		\[
		0\geq P(u)\geq C_1\|u\|_{X_\al}^2-C_2(\|u\|_{X_\al}^{\po+1}+\|u\|_{X_\al}^{\poo+1}),
		\]
		for some constants $C_1,C_2>0$. Therefore, $\|u\|_{X_\al}\geq C>0$, where $C$ is a constant depending only on $\po$, $\poo$, $\mo$ and $\moo$.
	\end{proof}
	
	In the following lemma, we show that $m_\sigma$ is decreasing on $\sigma\leq0$ which shows the strict subadditivity condition of $m_\sigma$.
	\begin{lemma}
		We have $m_\sigma\geq0$ for all $\sigma\in\rr$. Moreover,   $m_\sigma$ positive and strictly decreasing on $(-\infty,0]$.
	\end{lemma}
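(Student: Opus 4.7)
The first step is to make $S_0$ explicit. With $r=\po$ (case $\moo<0$), expanding $S_0=S-(\po+1)^{-1}P$ causes the $\int_\rt|u|^{\po+1}\dz$ contributions to cancel, leaving
\begin{equation*}
S_0(u)=\frac{\po-1}{\po+1}\,I(u)+\frac{(\po-\poo)(-\moo)}{(\po+1)(\poo+1)}\int_\rt|u|^{\poo+1}\dz,\qquad(\ast)
\end{equation*}
and both coefficients are positive since $\po>\poo\geq1$ and $\moo<0$. Hence $S_0\geq0$ on $\x$ and $m_\sigma\geq0$ for every $\sigma\in\rr$. For $\sigma\leq0$, Lemma \ref{lem23} supplies $\|u\|_\x\geq C_\sigma>0$ on $N_\sigma$, so $I(u)=\tfrac12\|u\|_\x^2\geq C_\sigma^2/2$, and the first term of $(\ast)$ alone forces $S_0(u)\geq(\po-1)C_\sigma^2/[2(\po+1)]>0$, whence $m_\sigma>0$.

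For the strict monotonicity, fix $\sigma_1<\sigma_2\leq 0$ and let $\{u_n\}\subset N_{\sigma_1}$ be a minimizing sequence. The plan is to amplitude-scale each $u_n$ into $N_{\sigma_2}$ and quantify the drop in $S_0$. Since $S_0(u_n)$ is bounded, $(\ast)$ bounds $I(u_n)$ and $\int_\rt|u_n|^{\poo+1}\dz$ uniformly; inserting $P(u_n)=\sigma_1$ and using $\mo>0$ also bounds $\int_\rt|u_n|^{\po+1}\dz$ uniformly. Define $g_n(\lambda):=P(\lambda u_n)$, which is continuous with $g_n(0)=0\geq\sigma_2$ and $g_n(1)=\sigma_1<\sigma_2$. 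The intermediate value theorem provides $\lambda_n\in(0,1)$ with $g_n(\lambda_n)=\sigma_2$, so $v_n:=\lambda_n u_n\in N_{\sigma_2}$. Formula $(\ast)$ also shows that $\lambda\mapsto S_0(\lambda u_n)$ is strictly increasing for $\lambda\geq 0$, which already yields $S_0(v_n)<S_0(u_n)$ pointwise in $n$.

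The main obstacle is upgrading this to a \emph{uniform} gap, and this is where the uniform bounds play their role. The derivative
\begin{equation*}
g_n'(\lambda)=4\lambda I(u_n)-(\po+1)\lambda^{\po}\mo\!\int_\rt|u_n|^{\po+1}\dz-(\poo+1)\lambda^{\poo}\moo\!\int_\rt|u_n|^{\poo+1}\dz
\end{equation*}
satisfies $|g_n'(\lambda)|\leq M$ on $\lambda\in[0,1]$ with $M$ independent of $n$, so the mean value theorem gives $\sigma_2-\sigma_1\leq M(1-\lambda_n)$, i.e.\ $\lambda_n\leq 1-\delta_0$ for $\delta_0:=(\sigma_2-\sigma_1)/M>0$ uniformly in $n$. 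Combined with $I(u_n)\geq C_{\sigma_1}^2/2$ and $(\ast)$, this delivers
\begin{equation*}
S_0(u_n)-S_0(v_n)\geq\frac{\po-1}{\po+1}(1-\lambda_n^2)\,I(u_n)\geq\delta>0
\end{equation*}
with $\delta$ independent of $n$. Since $m_{\sigma_2}\leq S_0(v_n)\leq S_0(u_n)-\delta$, sending $n\to\infty$ gives $m_{\sigma_2}\leq m_{\sigma_1}-\delta<m_{\sigma_1}$, as required.
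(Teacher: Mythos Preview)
Your argument is correct and follows essentially the same strategy as the paper: amplitude-scale $u\in N_{\sigma_1}$ down to $N_{\sigma_2}$, use a uniform Lipschitz bound on $\lambda\mapsto P(\lambda u)$ together with the mean value theorem to force $\lambda_n\le 1-\delta_0$, and then convert this into a uniform drop in $S_0$ via the lower bound $\|u\|_{X_\alpha}\ge C$ from Lemma~\ref{lem23}. Your explicit formula $(\ast)$ for $S_0$ makes the computations cleaner than the paper's, which instead differentiates $h(r)=S_0(ru)$ and appeals to the bound $\|ru\|_{X_\alpha}\ge C$ along the whole segment $r\in[r_u,1]$; your route avoids having to check that $P(ru)\le 0$ for intermediate $r$. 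Two cosmetic remarks: the constant from Lemma~\ref{lem23} is actually independent of $\sigma$, so you may write $C$ rather than $C_\sigma$; and in the IVT step, when $\sigma_2=0$ the endpoint value $g_n(0)=0$ equals $\sigma_2$, so to guarantee $\lambda_n\in(0,1)$ (hence $v_n\not\equiv 0$) you should note that $g_n(\lambda)\sim 2\lambda^2 I(u_n)>0$ for small $\lambda>0$.
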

	\begin{proof}
		Since $\frac{1}{\po+1}N(u)\geq K(u)$, then $S_0(u)\geq0$ for all $u\in X_\al$. This means that $m_\sigma\geq0$ for any $\sigma\in\rr$. If $\sigma\leq 0$, then by Lemma \ref{lem23}, there exits $C$ such that $\|u\|_{X_\al}\geq C$. Then 
		\[
		S_0(u)\geq C_{p}C^2
		\]
		for any $u\in N_\sigma$. Hence, $m_\sigma>0$.
		
		Next, for $\sigma<\sigma_2\leq0$, we can consider $u\in N_\sigma$ such that $S_0(u)<2m_{\sigma_2}$. We will get the desired inequality $m_{\sigma_1}>m_{\sigma_2}$, if no such $u$  exists. First note that $N(u)>0$ so if we define $g(r)=P(ru)=r^2(2I(u)-r^{-2}N(ru))$ then since $\left<N'(v),v\right>\geq(\gamma+1)N(v)$ for all $v\in X_\al$ it follows that $\frac{d}{dr}(r^{-2}N(ru))>0$ and thus there exists at most one $r>0$ such that $g(r)=\sigma_2$. Hence by the reasoning in the proof of Lemma \ref{lem23}, there exists a unique $r_u<1$ such that $P(r_uu)=\sigma_2$. We show that there exists $r_0<1$, independent of $u$, such that $r_u\leq r_0$. By using 
		\[
		S_0(u)\geq\left(1-\frac2{\po+1}\right)I(u)\geq
		\left(1-\frac2{\po+1}\right)\|u\|_{X_\al}^2,
		\]
		we obtain that there is $C>0$ such that $\|u\|_{X_\al}\leq C$ for all $u$ such that $S_0(u)<2m_{\sigma_2}$. Hence, it follows immediately that $g'(r)\geq- C_0$ for all $r<1$, where $g(r)=P(ru)$. Then we have by integrating from $r_u$ to 1 that 
		\[
		\sigma_1-\sigma_2=g(1)-g(r_u)\geq -C_0(1-r_u).
		\]
		and thereby $r_u\leq1-\frac{\sigma_2-\sigma_1}{C_0}=:r_0$.
		Now as $P(ru)\leq0$ for $r_u\leq r\leq1$, then similar to the proof of Lemma \ref{lem23}, it holds that $\|ru\|_{X_\al}\geq C$. Consequently, $h'(r)\geq2(1-\frac2{\po+1}) C^2$ for all $r_u\leq r\leq 1$, where $h(r)=S_0(ru)$. It follows that $S_0(u)-S_0(ru)\geq 2(1-\frac2{\po+1}) \frac{C^2}{C_0}(\sigma_2-\sigma_1)$. The fact $m_{\sigma_2}\leq S_0(r_uu)$ reveals that $$S_0(u)\geq m_{\sigma_2}2\left(1-\frac2{\po+1}\right) \frac{C^2}{C_0}(\sigma_2-\sigma_1)$$
		for all $u\in N_\sigma$ such that $S_0(u)\leq 2m_{\sigma_2}$. Therefore, $m_{\sigma_1}>m_{\sigma_2}$, and the proof is complete.
	\end{proof}

	\begin{figure}[ht]
		\begin{center}
			\scalebox{0.18}{\includegraphics{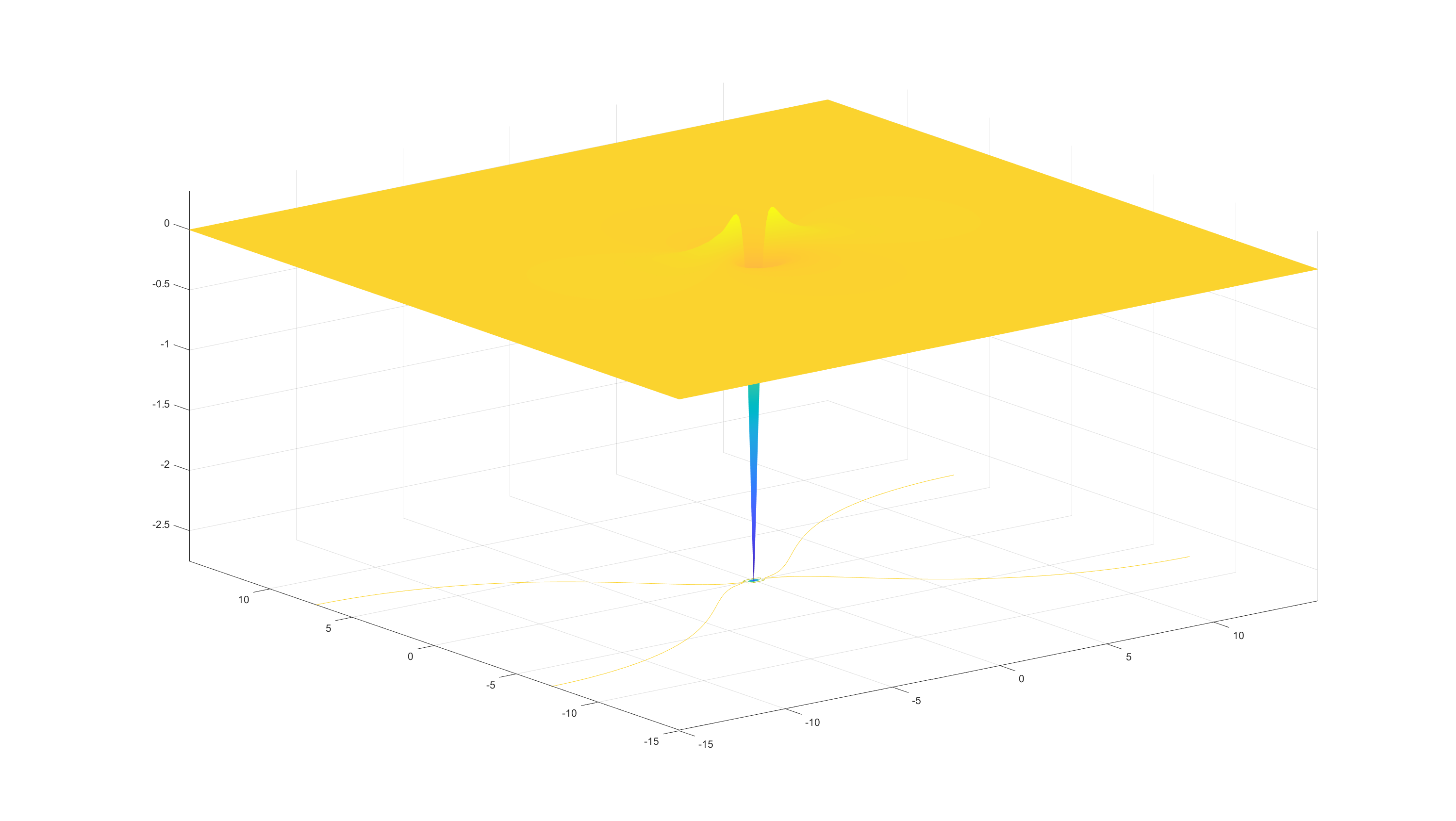}   	\ \includegraphics{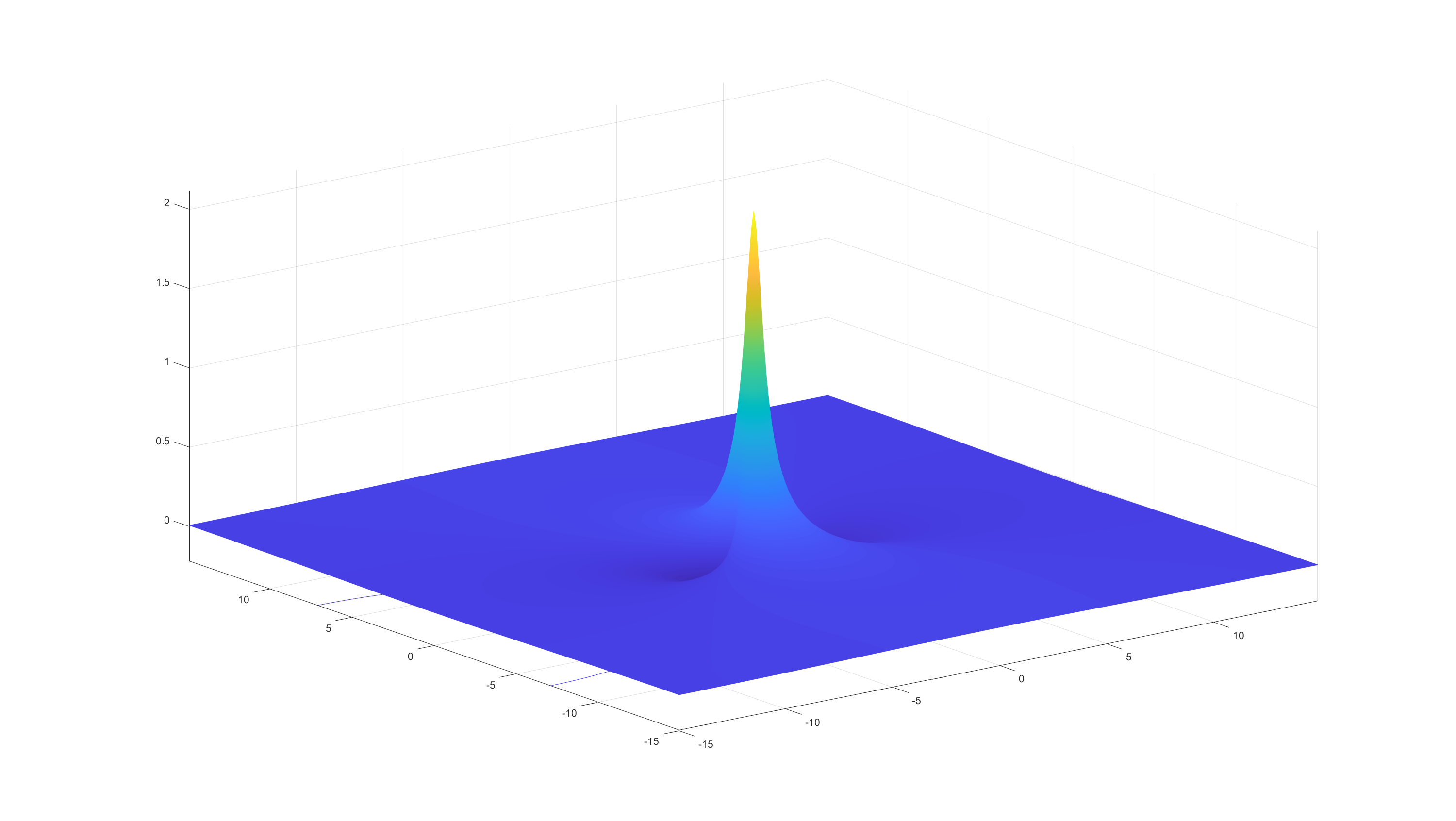}}
			\caption{Numerical solitary waves of  \eqref{main-fifthkp} with $\al=1$, $f(u)=u^4-2u^2$ (left) and $f(u)=u^4+2u^2$ (right). } \label{nume-slt-2}
	\end{center}\end{figure}
	
		\begin{figure}[ht]
		\begin{center}
			\scalebox{0.18}{\includegraphics{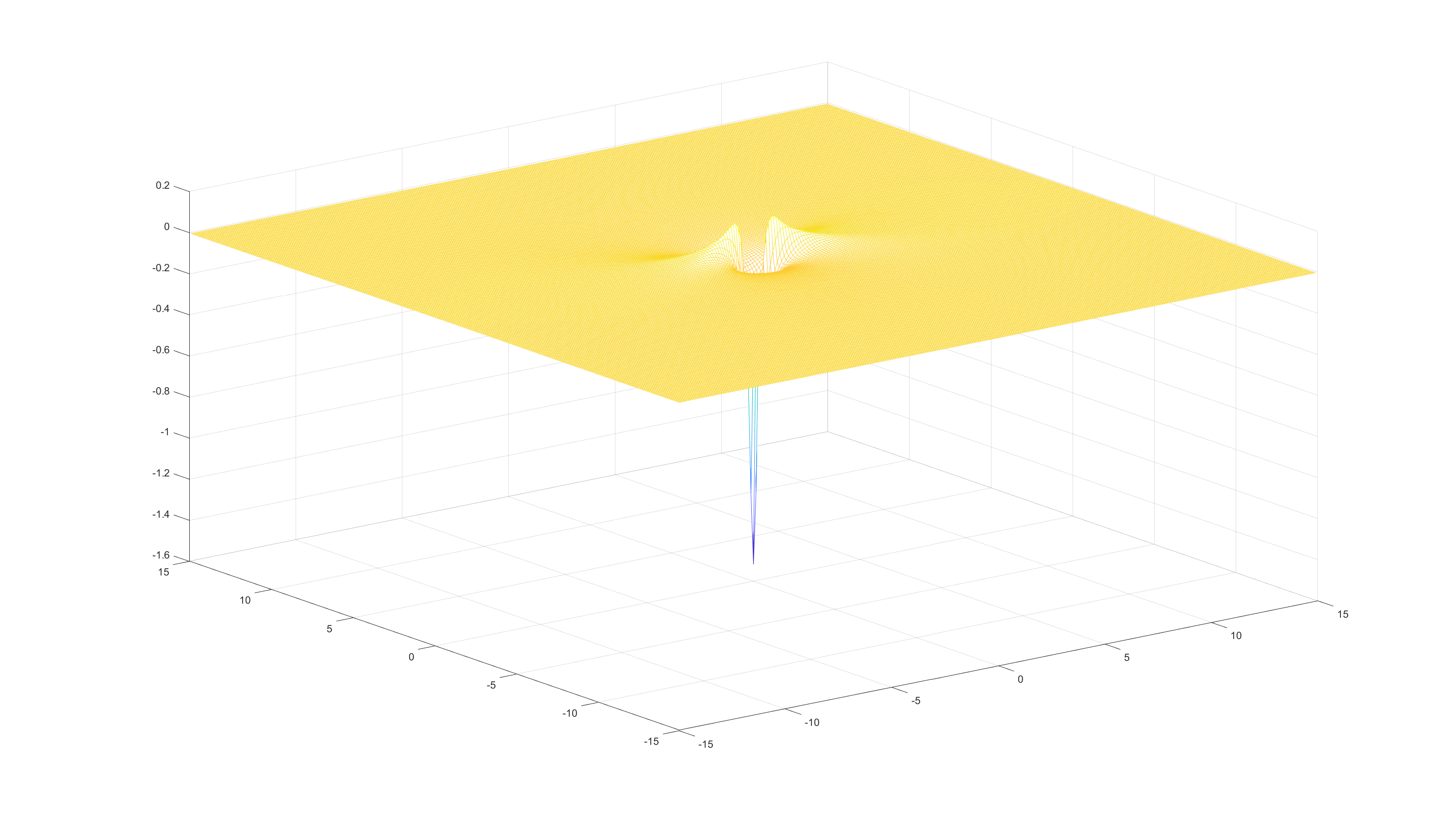} \ \includegraphics{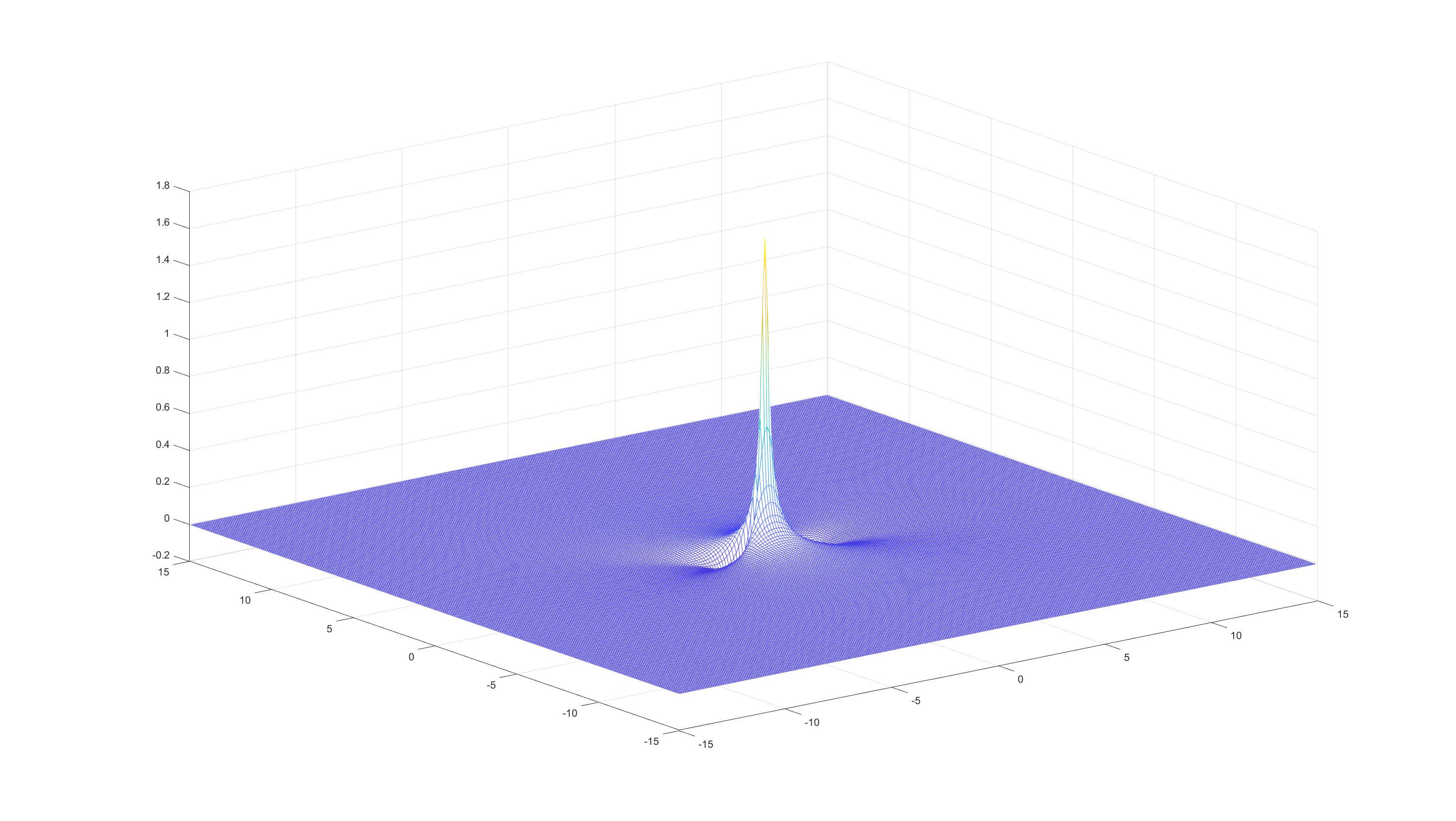}}  				\caption{Numerical solitary waves of  \eqref{main-fifthkp} with $\al=1$, $f(u)=-2|u|^3u+|u|u$ (left) $f(u)=-\frac32|u|^3u+|u|u$ (right). } \label{nume-slt-212}
	\end{center}\end{figure}
	
	\begin{proof}[Proof of Theorem \ref{theorem-exist}]
		Let $\{u_n\}$ be a minimizing sequence. Then it is bounded. Indeed, since $S(u_n)$ and $P(u_n)$ are bounded, $S_0(u_n)$ is bounded, and because of\[
		S_0(u_n)\gtrsim\left(1-\frac{2}{\po+1}\right)I(u_n),
		\]
		the sequence $\{u_n\}$ is bounded in $\x$. Now, since $m>0$, the sequence $\{S(u_n)\}$ is uniformly bounded below for large enough $n$, and then $\{u_n\}$ is uniformly  bounded below in $X_\al$. To get the minimizer function $u\in X_\al$, we apply the concentration-compactness principle (see \cite{AMC,lps-0}) for the sequence $\chi_n=|u_n|^2+|\dx u_n|^2+|\nd(u_n)_y|^2$, and $\lim_{n\to\infty}\int_\rt\chi_n\dz=L>0$, up to a subsequence (see also the proof of Theorem \ref{criticalcase-normalized}). The sequence $\chi_n$ can be normalized such that $\int_\rt\chi_n\dz=L$. We rule out the vanishing and dichotomy cases. If the vanishing case occurs, then by an argument similar to \cite{dbs-0,dbs-1}, $u_n\to0$ for any $2<q<2^\ast$ as $n\to\infty$. This implies that $K(u_n)\to0$ as $n\to\infty$. Hence, $I(u_n)\to0$ and $S(u_n)\to0$ as $n\to\infty$, because $P(u_n)\to0$. This contradicts $S(u_n)\to m>0$. Suppose that the dichotomy case occurs. Then, there are the bounded sequences   $\{v_n\},\{w_n\}\subset X_\al$ such that
		\[
		\begin{split}
			&		\lim_{n\to\infty}\|u_n-v_n-w_n\|_{\x}=0,\\
			&\lim_{n\to\infty} K(u_n)-K(v_n)-K(w_n)=0 
		\end{split}
		\]
		and $N(u_n)-N(v_n)-N(w_n)\to0$ as $n\to\infty$. These imply that $P(u_n)-P(v_n)-P(w_n)\to0$ and $$S_0(u_n)-S_0(v_n)-S_0(w_n)\to0$$ as $n\to\infty$. Suppose (by extracting subsequences if necessary) that $\sigma_1=\lim_{n\to\infty}  P(v_n)$ and $\sigma_2=\lim_{n\to\infty} P(w_n)$. Then $\sigma_1+\sigma_2=0$. If $\sigma_1>0$, then there is $n_0\in\N$ such that $\sigma_{2,n}=P(w_n)<\sigma_2/2$ for all $n\geq n_0$. Since $m_\sigma$ is strictly decreasing in $\sigma$, so 
		$S_0(w_n)\geq m_{\sigma_{2,n}}>m_{\sigma_2/2}$ for all $n\geq n_0$. As $S_0(v_n)\geq0$  for all $n$, then 
		\[
		S_0(v_n)+S_0 w_n\geq\sigma_{\frac12\sigma_2}
		\]
		for all $n\geq n_0$. It is concluded to the contradiction
		\[
		m=\lim_{n\to\infty} S(u_n)=\lim_{n\to\infty}S_0(u_n)=\lim_{n\to\infty}\left(S_0(v_n)+S_0(w_n)\right)\geq\sigma_{\frac12\sigma_2}>m.
		\]
		A similar contradiction holds for the case $\sigma_1<0$. Next, we consider  $\sigma_0=\sigma_2=0$. In this case, we have from the coercivity of $I$ that $I_1,I_2>0$, where $I_1=\lim_{n\to\infty}I(v_n)$ and $I_2=\lim_{n\to\infty}I(w_n)$. Then, for any $\epsilon>0$  there is $n_0\in\N$ such that $I(v_n)<2N(v_n)(1+\epsilon)^{\po-1}$
		and $I(w_n)<2N(w_n)(1+\epsilon)^{\po-1}$ for all $n\geq n_0$. Since $\{v_n\}$ is bounded in $X_\al$, then $K(v_n)-K(\theta v_n)\leq C(\theta-1)$ for all $n$ and $\theta>1$, and $N(v_n)-N(\theta v_n)\leq C(\theta-1)$ for some $C>0$. If $P(v_n)>0$, then $P(\theta v_n)=0$ for some $\theta<\left(\frac{2I(v_n)}{N(v_n)}\right)^{\frac1{\po-1}}$, because we have for $\theta>1$ that $P(\theta v_n)\leq2\theta^2I(v_n)-\theta^{\po+1}N(v_n)$. A straightforward computation shows for some $C_1>0$ that
		\[
		m\leq S_0(\theta v_n)
		\leq S_0(v_n)+C(\theta-1)\leq S_0(v_n+C_1\epsilon).
		\]
		Hence, $S_0(v_n)\geq m-C_1\epsilon$. This inequality also trivially holds if $P(v_n)\leq0$, and also for $S_0(w_n)$. Therefore, we obtain that
		\[
		S_0(v_n)+	 S_0(w_n)\geq 2m-2C_1\epsilon
		\]
		for all $n\geq n_0$. This obviously shows that $m\lim_{n\to\infty}S(u_n)\geq 2m-2C_1\epsilon$, and consequently, $m\geq 2m$ which is a contradiction. Thus the dichotomy does not occur. Finally, the compactness case should occur. So, by using Lemma 3.3 in \cite{dbs-1}, there is a sequence $\{z_n\}\subset\rt$ and some $u\in X_\al$ such that $\tilde u_n=u_n(\cdot-z_n)\rightharpoonup u$  in $X_\al$ and  $\tilde u_n(\cdot-z_n)\to u$ in $L^q_{\rm loc}(\rt)$ for any $q\in(2,2^\ast)$, and thereupon the strong convergence in $L^q(\rt)$ is deduced. Hence $K(\tilde u_n)=K(u_n)\to K(u)$  and $N(\tilde  u_n)=N(u_n)\to N(u)$ as $n\to\infty$. The weak lower semicontinuity of $I$ shows that
		\[
		S(u)+K(u)=I(u)\leq \liminf_{n\to\infty}I(u_n)=\liminf_{n\to\infty}(S(u_n)+K(u_n))=m+K(u).
		\]
		By a similar computation, $P(u)\leq0$ and $S(u)\leq m$. But, we have $S_0(u)<m_\sigma$ for all $\sigma<0$, and thereby $P(u)=0$. Indeed if $P(u)=\sigma<0$ then we get the contradiction $m_\sigma\leq S_0(u)<m_\sigma$. hence, $u$ achieves the minimum $m$. Moreover, as $\lim_{n\to\infty}S(\tilde u_n)=m=S(u)$, then $\lim_{n\to\infty} I(\tilde u_n)=\lim_{n\to\infty} (S(\tilde u_n)+K(\tilde u_n))=I(u)$. Consequently, we obtain from $\tilde u_n\rightharpoonup u$ in $X_\al$ that $\tilde u_n\to u$ in $X_\al$.
	\end{proof}
	
	\begin{figure}[ht]
		\begin{center}
			\scalebox{0.16}{\includegraphics{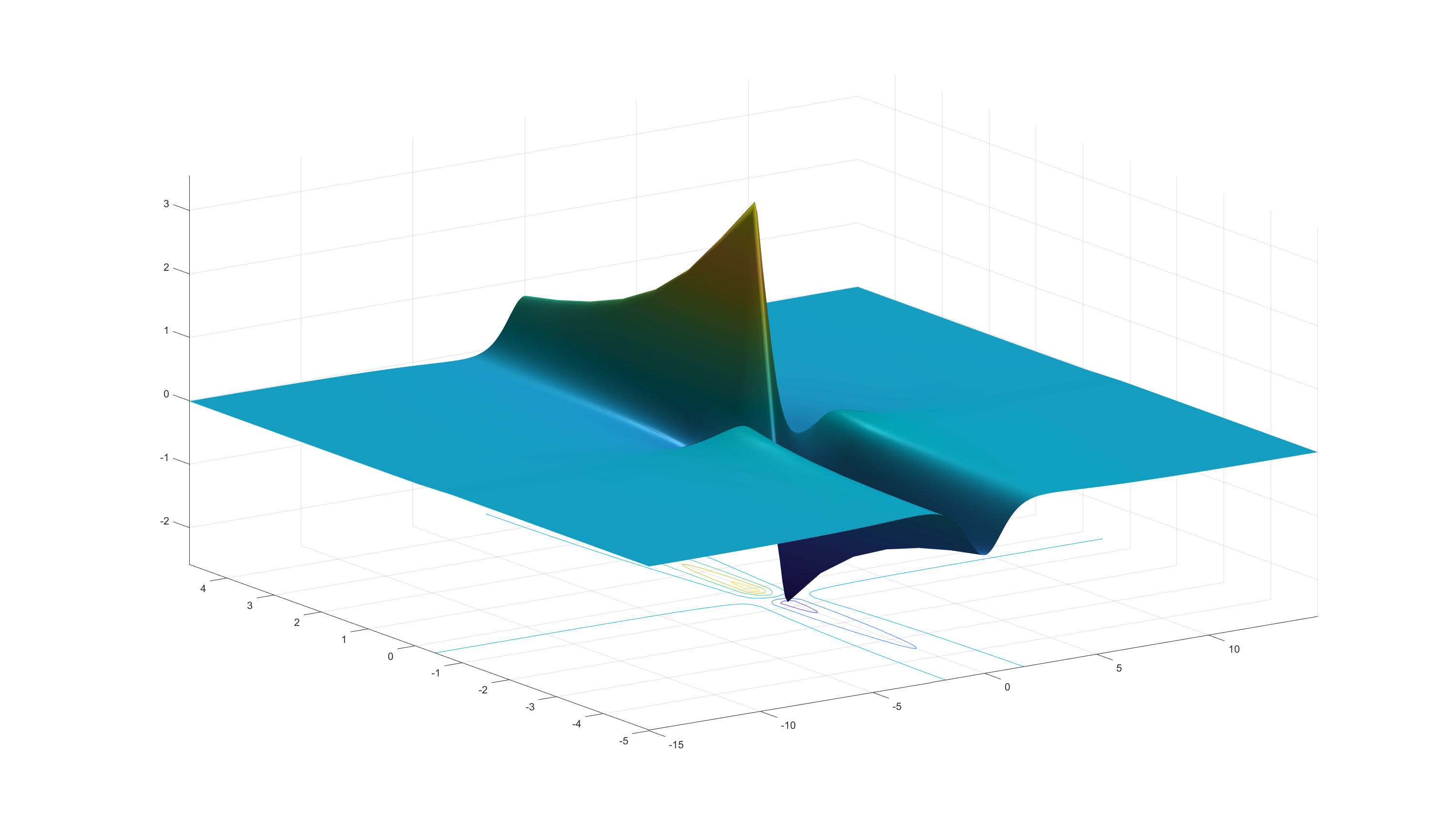}  	 \ \includegraphics{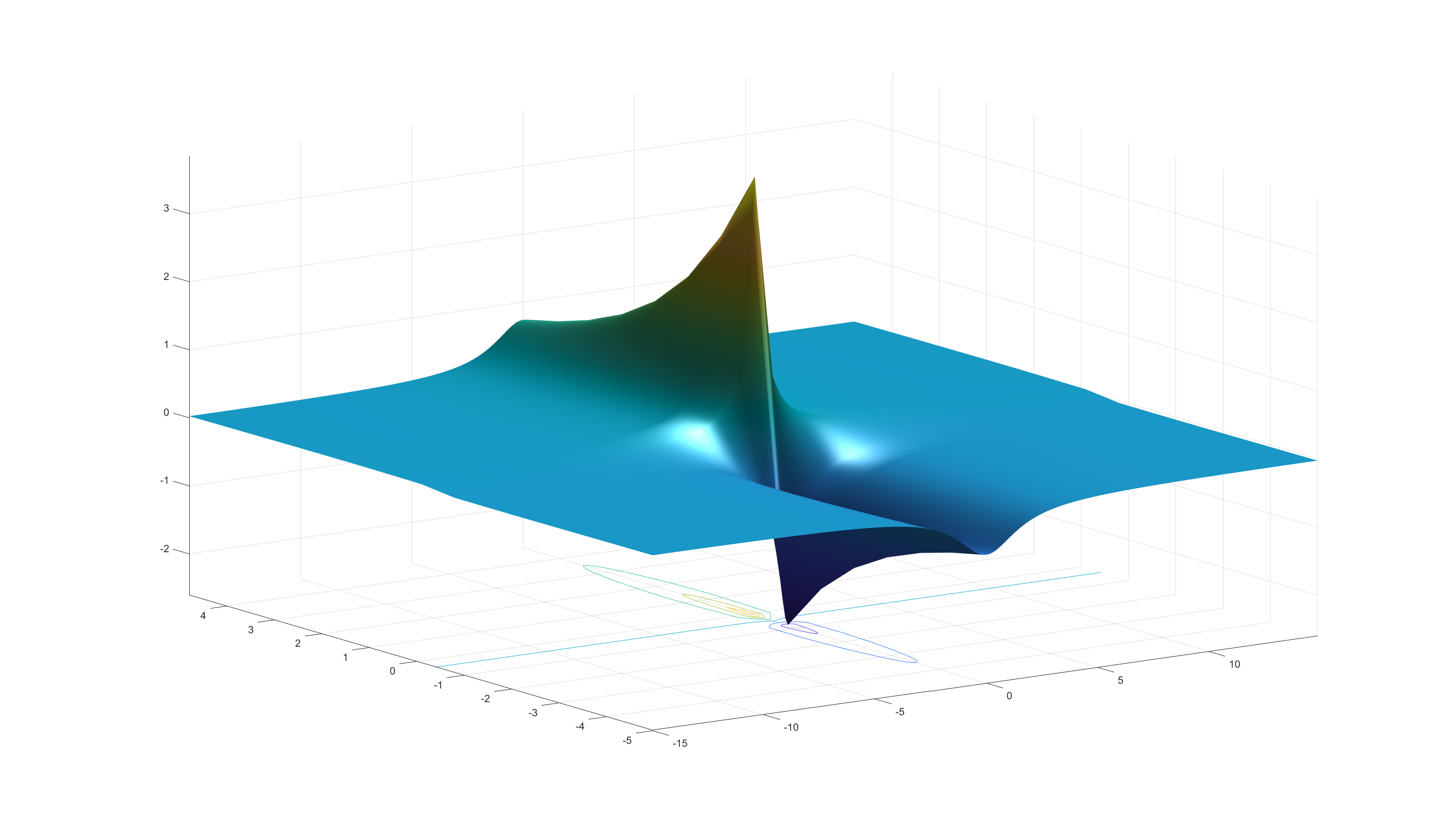}}
			\caption{The numerical surfaces of the projections of solitary waves of  \eqref{main-fifthkp} on the $XZ$-plane (left) and  the $YZ$-plane (right) with $\al=1$ and $f(u)=|u|^3+\mu_2 u^2$ with various values of $\mu_2$. }\label{nume-slt-3} 
	\end{center}\end{figure}

	\begin{remark}
		Numerical results illustrated in Figures \ref{nume-slt-1}-\ref{nume-slt-3} show a similar description of the behavior and the polarity change of the solitary waves with the different sign of $\mu_1$ and $\mu_2$ as it was reported for the Gradner equation \eqref{gardner} (see   Figure \ref{gard}).  
	\end{remark}
	\begin{theorem}\label{exist-th}
		Let $u\in X_\al$ satisfy $P(u)=0$ and $S(u)=m$. Then $u$ is a solution of \eqref{gkp}. Moreover, $m=m'$, and $u$ achieves $m$ is and only if $u$ achieves the minimum $m'$, where
		\[
		m'=\inf_{u\in N'_0}S_0(u)
		\]
		and $N_0'=\{u\in X_\al,\;u\not\equiv0,\; P(u)\leq0\}$.
	\end{theorem}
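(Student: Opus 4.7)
I argue in two stages; throughout I treat the case $\moo < 0$ and note that the case $\moo > 0$ is analogous and simpler (with $r = \poo$ in the definition of $S_0$). The main device is the scaling $v \mapsto \lambda v$, for which a direct computation gives $P(\lambda v) = \lambda^2 h_v(\lambda)$ with
\[
h_v(\lambda) = 2 I(v) - \lambda^{\po - 1}\,\mo\,\|v\|_{L^{\po+1}(\rt)}^{\po+1} - \lambda^{\poo - 1}\,\moo\,\|v\|_{L^{\poo+1}(\rt)}^{\poo+1}.
\]
A shape analysis using $\mo > 0$, $\moo < 0$ and the exponent ordering $\po - 1 > \poo - 1 \geq 0$ shows that, whenever $N(v) > 0$, $h_v$ has a unique positive zero $\lambda(v)$ at which $h_v'(\lambda(v)) < 0$. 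Since every $v \in N_0' \setminus \{0\}$ satisfies $N(v) \geq 2 I(v) > 0$, the assignment $v \mapsto \lambda(v)\, v$ defines a retraction of $N_0'$ onto $N_0$. An equally short computation yields
\[
S_0(\lambda v) = \tfrac{\po - 1}{\po + 1}\lambda^2 I(v) + \tfrac{\moo(\poo - \po)}{(\po + 1)(\poo + 1)}\lambda^{\poo + 1}\|v\|_{L^{\poo+1}(\rt)}^{\poo + 1},
\]
in which both coefficients are positive (by $\moo < 0$ and $\poo < \po$), so $\lambda \mapsto S_0(\lambda v)$ is strictly increasing on $(0, \infty)$.

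For the identity $m = m'$ and the coincidence of minimizers, the inclusion $N_0 \subset N_0'$ gives $m' \leq m$ immediately. Conversely, for $v \in N_0'$ either $P(v) = 0$---so $S_0(v) = S(v) \geq m$---or $P(v) < 0$, in which case $h_v(1) = P(v) < 0$ forces $\lambda(v) < 1$, and strict monotonicity combined with $S_0(\lambda(v) v) = S(\lambda(v) v) \geq m$ yields $S_0(v) \geq m$. The same monotonicity forces every $u$ achieving $m'$ to satisfy $P(u) = 0$: otherwise $\lambda(u) u \in N_0$ would yield $S(\lambda(u) u) < S_0(u) = m$, contradicting the definition of $m$. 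Conversely, any $u \in N_0$ achieving $m$ automatically lies in $N_0'$ with $S_0(u) = S(u) = m'$, closing the bi-implication.

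For the solution property, let $u \in N_0$ with $S(u) = m$. Setting $F(\lambda, w) = P(\lambda w)$, we have $F(1, u) = 0$ and $\partial_\lambda F(1, u) = h_u'(1) < 0$, so the implicit function theorem produces a $C^1$ map $\lambda_0 : U \to (0, \infty)$ on a neighborhood $U$ of $u$ in $X_\al$, with $\lambda_0(u) = 1$ and $P(\lambda_0(w) w) \equiv 0$. For any $\phi \in X_\al$, the scalar function $\epsilon \mapsto S\bigl(\lambda_0(u + \epsilon\phi)(u + \epsilon\phi)\bigr)$ attains its minimum $m$ at $\epsilon = 0$, so its derivative vanishes there. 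Applying the chain rule and crucially using the identity $\langle S'(u), u\rangle = P(u) = 0$,
\[
0 = \lambda_0'(0)\langle S'(u), u\rangle + \langle S'(u), \phi\rangle = \langle S'(u), \phi\rangle,
\]
for every $\phi \in X_\al$, whence $S'(u) = 0$ in $X_\al^*$, which is exactly \eqref{gkp}.

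The principal obstacle is the shape analysis of $h_v$: the two nonlinear contributions carry opposite signs because $\mo \moo < 0$, but the exponent ordering makes one term dominate as $\lambda \to 0^+$ and the other as $\lambda \to \infty$, so $h_v$ starts at $2 I(v) > 0$, eventually decreases to $-\infty$, and crosses zero exactly once at $\lambda(v)$ with $h_v'(\lambda(v)) < 0$. This non-degeneracy underpins both the retraction in the first stage and the implicit function theorem step in the second.
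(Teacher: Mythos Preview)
Your proof is correct. The core ingredients coincide with the paper's---the non-degeneracy $h_u'(1)<0$ you establish is exactly the paper's $\langle P'(u),u\rangle<0$, and your scaling retraction $v\mapsto\lambda(v)v$ is the mechanism underlying the paper's monotonicity of $\sigma\mapsto m_\sigma$---but you package them differently in two places.

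For the identity $m=m'$ and the coincidence of minimizers, the paper simply quotes the previously proved lemma that $m_\sigma$ is strictly decreasing on $(-\infty,0]$, so that $m=m_0<m_\sigma$ for every $\sigma<0$; this immediately rules out $P(u)<0$ for any minimizer of $m'$ and gives $m\le m'$. You instead give a self-contained pointwise argument: compute $S_0(\lambda v)$ explicitly, observe both coefficients are positive, and conclude strict monotonicity in $\lambda$, which together with $\lambda(v)<1$ yields $S_0(v)>S(\lambda(v)v)\ge m$. Your route avoids relying on the earlier lemma at the cost of a short extra calculation; the paper's is shorter given that lemma is already available.

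For the solution property, the paper applies the Lagrange multiplier rule to get $S'(u)=\theta P'(u)$, then tests against $u$ and uses $P(u)=0$ together with $\langle P'(u),u\rangle\neq0$ to force $\theta=0$. You instead use the implicit function theorem (whose hypothesis $\partial_\lambda F(1,u)=h_u'(1)\neq 0$ is literally the same non-degeneracy) to parameterize $N_0$ locally and differentiate along it, exploiting $\langle S'(u),u\rangle=P(u)=0$ to kill the term coming from $\lambda_0'$. Both arguments are standard in the Nehari-manifold literature and are equivalent in content; yours makes the ``natural constraint'' viewpoint explicit.

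One small remark: your hypothesis $N(v)>0$ in the shape analysis is automatically satisfied on $N_0'$, as you note, but is not actually needed for the existence and uniqueness of $\lambda(v)$---the unimodality of $h_v$ holds for every nonzero $v$ since $h_v(0^+)=2I(v)>0$ and $h_v(\lambda)\to-\infty$.
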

	\begin{proof}
		By the definition of $m$, there is the Lagrange multiplier $\theta\in\rr$ such that $S'(u)=\theta P'(u)$. Thus,
		\[
		\theta\langle P'(u),u\rangle=\langle S'(u),u\rangle=P(u)=0.
		\]
		By this assumption, we obtain
		\[
		\theta\langle P'(u),u\rangle
		=4I(u)-	\theta\langle N'(u),u\rangle=2\left(1-\frac{1}{\po+1}\right)I(u)<0.
		\]
		Thus, $\theta=0$ and $S'(u)=0$.
		
		Next, suppose that $S(u)=m$ and $P(u)=0$. Then clearly, $m'\leq S(u)=m$. Since $m<m_{\sigma}$ for all $\sigma<0$, we have $m<S_0(u)$ for all $u$ such that $P(u)<0$ and thereby  $m\leq m'$. This means that $m=m'$. Now if $u$ achieves the minimum $m'$, then $P(u)\leq0$ and $S_0(u)=m'=m$. Suppose that $P(u)<0$. Then $m_\sigma\leq S_0(u)=m$ which contradicts the fact $m<m_\sigma$. Hence, $P(u)=0$ and $u$ achieves  $m$.
	\end{proof}

	To have a complete picture of the existence of ground state, we extend our results to the case   $\mu_1<0<\mu_2$ with $\po>\poo$. Assuming
\begin{equation}\label{exist-cond}
		\moo(\po-1)+\mo(\poo-1)>0,
\end{equation}
	we define
	\[
	c_\ast=\frac{2(\po-1)\mu_2+\mu_1(\poo-1)}{(\po-1)(\poo+1)}
	\paar{\frac{\moo(\poo-1)(\po+1)}{-\mo(\po-1)(\poo+1)}}^\frac{\poo-1}{\po-\poo}.
	\]

	\begin{theorem}\label{ex-theo-pq}
Let $\moo(\po-1)+\mo(\poo-1)>0$.		For any $c\in(0,c_\ast),$  there exists a ground state of \eqref{gkp}.
	\end{theorem}
	
		\begin{remark}
			In view of Berestycki-Lions seminal work \cite{berlions}, we observe that
	\begin{equation}
		c_\ast=\sup\sett{c>0,\;\inf_{r\geq0}\paar{\frac{c}{2}r^2-F(r)}<0}.
	\end{equation}
			\end{remark}
	
	\begin{proof}[Proof of Theorem \ref{ex-theo-pq}]
		Consider a new variational problem
		\begin{equation}\label{new-min-highlow}
			\fd(c)=\inf\sett{S(u),\;u\in X_\al,\;u\not\equiv0,\;\fp(u)=0},
		\end{equation}
		where
		\begin{equation}
			\fp(u)=\frac{2-\al}{2(1+\al)}\|u\|_{\dot{X}_\al}^2+\frac12\int_\rt   \paar{cu^2-2F(u)}\dz.
		\end{equation}
		It is clear to see that
		\begin{equation}\label{expl-dc}
			\fd(c)=\infty
		\end{equation}
		as $c\to c_\ast$. It is straightforward to see from Lemma \eqref{Pohozaev} that any ground state of \eqref{gkp} is also a minimizer of \eqref{new-min-highlow}. We show that
		\begin{equation}\label{new-min-highlow-t}
			\fd(c)=\tilde\fd(c):=\inf\sett{\frac\al{\al+2}\|u\|_{\dot{X}_\al}^2,\;u\in X_\al,\;u\not\equiv0,\;\fp(u)\leq0}.
		\end{equation}
		Let $u$ be in $X_\al\setminus\{0\}$ such that $\fp(u)\leq0$. Using the fact $S(u)=\frac\al{\al+2}\|u\|_{\dot{X}_\al}^2$ and defining $u_\lam(x,y)=u(\lam x,\lam^2y)$, we obtain that there exists $\tau_0\geq1$ such that 
		\[
		\fd(c)\leq S(u_{\lam_0})=
		\frac\al{\al+2}\norm{u_{\lam_0}}_{\dot{X}_\al}^2
		\leq\frac\al{\al+2}\|u\|_{\dot{X}_\al}^2.
		\]
		This means that $\fd(c)\leq\tilde\fd(c)$. On the other hand, since $S(u)-\fp(u)=\frac\al{\al+2}\|u\|_{\dot{X}_\al}^2$, then $\fd(c)\geq\tilde\fd(c)$, so that $\fd(c)=\tilde\fd(c)$. Moreover, we have $\tilde\fd(c)>0$. Indeed, it yields for any $u\in X_\al\setminus\{0\}$ with $\fp(u)\leq0$ from \eqref{bin} that
		\[
		\frac{2-\al}{2(\al+2)}\|u\|_{\dot{X}_\al}^2+cM(u)\leq K(u)
		\leq \frac c2M(u)+C_c\|u\|_{\dot{X}_\al}^{2^\ast}
		\]
		for some $C_c$ depending only on $c$. This reveals that $\|u\|_{\dot{X}_\al}^{2^\ast-2}\geq C>0$. Hence, $\tilde\fd(c)>0$.
		
		Next, we prove that any minimizer $\ff$ of $\fd(c)$ is a ground state of \eqref{gkp}. To do so, it is enough to show that $\ff$ satisfies \eqref{gkp}.  By contradiction, we assume that there exists a minimizer of $\ff$ of $\fd(c)$ such that $S'(\ff)\neq0$. Without loss os generality, we can assume that $\scal{S'(\ff),\psi}=-1$ for some $\psi\in X_\al$. Since $\al=1$, $\fp(\ff)=0$ and $$\frac{\dd}{\dd\lam}S(\ff_\lam)=-\frac{\lam^{-4}}2(1-\lam^2)\|\ff\|_{\dot{X}_\al}^2.$$
		Hence, $S(\ff_\lam)$ with $\lam>0$ attains its maximum at $\lam=1$. 
		On the other hand, since $S$ is $C^1$, there exists $1\gg\delta_0>0$ sufficiently small such that $\scal{S'(\ff_\lam),\psi}$ is close to $-1$ if $|\lam-1|<\delta_0$. Define $\tilde\ff_\lam=\ff_\lam+\delta\chi\paar{\frac{\lam-1}{\delta}}\psi$, where $0<\delta\ll\delta_0$ and $\chi(\lam)=|1-\lam|\mathbb{1}_{[-1,1]}(\lam)$. It is clear that
		\[
		\fp(\tilde\ff_{1-\delta})<0<\fp(\tilde\ff_{1+\delta}).
		\]
		So, there exists $\delta^0\in(-\delta,\delta)$ such that $\fp(\tilde\ff_{\delta^0})=0$. Thusly, it follows from the definition $\fd(c)$  that
		\[
		\begin{split}
			\fd(c)&\leq S(\tilde\ff_{\delta^0})
			=
			S( \ff_{\delta^0})+
			\scal{S'(\ff_{\delta^0}),\delta\chi\paar{\frac{\delta^0-1}{\delta}}\psi}
			+o\paar{\norm{\tilde\ff_{\delta^0}-\ff_{\delta^0}}_{X_\al}}\\
			&\leq S( \ff )-\delta\chi\paar{\frac{\delta^0-1}{\delta}}\\
			&<S(\ff)=\fd(c)
		\end{split}
		\]
		for sufficiently small $\delta$. This contradiction reveals that $\ff$ is aground state.
		Now, assume that $\{u_n\}\subset X_\al$ is a minimizing sequence. So, $S(u_n)\to \fd(c)$ and $\fp(u_n)\to0$ as $n\to\infty$. This implies that $\fd(c)\sim\|u_n\|_{\dot{X}_\al}^2$ for any $n\gg1$. Another application of \eqref{bin} gives for any $n\gg1$ that
		\[
		\fp(u_n)\gtrsim M(u_n)-\paar{M(u_n)}^{c_p} \|u_n\|_{\dot{X}_\al}^{\frac{(p-1)(\al+2)}{2\al}},
		\]
		where $c_p$ is defined in \eqref{bin}.
		This yields that $M(u_n)\lesssim1$ for any $n\gg1$. Moreover, $\|u_n\|_{L^{p+1}}^{p+1}\gtrsim \fd(c)$ for any $n\gg1$. By applying $\mathbbm{pqr}$-lemma (see \cite{fll}) with $\mathbbm{p}=2<\mathbbm{q}=p<\mathbbm{r}=6$, there exists $C>0$
		and $r>0$ such that
		\[
		\abso{\sett{(x,y)\in\rt,\; |u_n(x,y)|>r}}\geq C
		\]
		for any $n\gg1$.
		Thus, it follows from the boundedness of $\{u_n\}\subset X_\al$ and \cite[Lemma 4]{liuwang} that there exist $C_1=C_1(C,\eta,r)$ and  a subsequence of $\{u_n\}$, still denoted by the same symbol, a non-trivial function $\ff$ and a sequence $\sett{z_n}\subset\rt$ such that 
		\[
		\abso{B\cap\sett{(x,y)\in\rt,\; |u_n((x,y)+z_n)|>r}}\geq C_1
		\]
		and
		$
		u_n(\cdot+z_n)\rightharpoonup\ff
		$ in $X_\al$, where $B$ is the unit box in $\rt$. Moreover, since $\al=1$, the weak convergence of $u_n$, the Brezis-Lieb lemma and \cite{liuwang} show  that
		
		\begin{equation}\label{bl-id}
			\begin{split}
				&\fp(u_n)-\fp(u_n-\ff)-\fp(\ff)\to0,\\
				&\norm{u_n}_{\dot{X}_\al}^2-\norm{u_n-\ff}_{\dot{X}_\al}^2-\norm{\ff}_{\dot{X}_\al}^2\to0
			\end{split}
		\end{equation}
		as $n\to\infty$. It is clear that $\fp(\ff)\leq0$. Indeed, if $\fp(\ff)>0$, then \eqref{bl-id} reveals that 
		\[
		\fp(u_n-\ff)=-\fp(\ff)<0,
		\]
		so that we obtain from $\fd(c)=\tilde\fd(c)$ that $\fd(c)\lesssim \|u_n-\ff\|_{\dot{X}_\al}^2$ for any $n\gg1$. Hence, we have from \eqref{bl-id} that
		\[
		\|\ff\|_{\dot{X}_\al}^2=\lim_{n\to\infty}\paar{\|\ff\|_{\dot{X}_\al}^2-\|u_n-\ff\|_{\dot{X}_\al}^2}
		\lesssim \fd(c)-\fd(c)=0.
		\]
		This contradiction shows the non-positivity of $\fp(\ff)$. Therefore, by utilizing again the fact $\fd(c)=\tilde\fd(c)$ together with the Fatou  lemma, we obtain that
		\begin{equation}\label{limit-fd}
			\lim_{n\to\infty}\frac\al{\al+2}\|u_n\|_{\dot{X}_\al}^2=\fd(c)=\frac\al{\al+2}\|\ff\|_{\dot{X}_\al}^2.
		\end{equation}
		Thus, $\{u_n(\cdot+z_n)\}$  strongly converges to $\ff$ in $\dot{X}_\al$. Moreover, 
		\begin{equation}\label{st-conv}
			u_n(\cdot+z_n)\to\ff
		\end{equation} in $L^s(\rt)$ for any $2<s\leq2^\ast$. Moreover, there exists $\lam_1\geq1$ such that $\fp(\ff_{\lam_1})=0$, where $\ff_{\lam_1}(x,y)=\ff(\lam_1x,\lam_1^2y )$, whence
		\[
		\fd(c)\leq S(\ff_{\lam_1})=\frac\al{\al+2}\norm{\ff_{\lam_1}}_{\dot{X}_\al}^2=\lam_1^{-1}\fd(c).
		\]
		So, $\lam_1=1$ which results to $\fp(\ff)=0$. Another application of \eqref{bl-id} together with $\lim_{n\to\infty}\fp(u_n)=0$ shows that $\lim_{n\to\infty}\fp(u_n-\ff)=0$. Consequently,    a combination of \eqref{st-conv} and \eqref{limit-fd} completes the proof.
	\end{proof}

Finally, we conclude this section by deriving the following Pohozaev identities, which will be needed in our instability analysis.

	\begin{lemma}\label{Pohozaev} Suppose $\ff\in X_\al$ is a solution of \eqref{gkp}. Then for integers $\alpha\geq1$,
		\begin{align*}
			\int_{\mathbb R^2}(D_x^\alpha\ff)^2+c\ff^2+(\partial_x^{-1}\ff_y)^2\dd x\dd y&=\int_{\mathbb R^2}\ff f(\ff)\dd x\dd y\\
			\int_{\mathbb R^2}\left(\alpha-\frac12\right)(D_x^\alpha\ff)^2-\frac12c\ff^2-\frac32(\partial_x^{-1}\ff_y)^2\dd x\dd y&=-\int_{\mathbb R^2}F(\ff)\dd x\dd y\\
			\int_{\mathbb R^2}-\frac12(D_x^\alpha\ff)^2-\frac12c\ff^2+\frac12(\partial_x^{-1}\ff_y)^2\dd x\dd y&=-\int_{\mathbb R^2}F(\ff)\dd x\dd y
		\end{align*}
	\end{lemma}
	\vskip 10pt
	\begin{proof} If $\ff$ is a solution of \eqref{gkp} then 
		\[
		D_x^{2\alpha}\ff+c\ff+\partial_x^{-2}\ff_{yy}=f(\ff)
		\]
		so multiplying by $\ff$, $x\ff_x$ and $y\ff_y$ and integrating yields the identities in the lemma.
	\end{proof}
 
	\begin{corollary}\label{poho-c}
		Summing the 3 identities above yields
	\[
	\alpha\int_{\mathbb R^2}(D_x^\alpha\ff)^2\dd x\dd y=\int_{\mathbb R^2}\ff f(\ff)-2F(\ff)\dd x\dd y,
	\]
	and subtracting the third one from the second one gives
	\[
	2\int_{\mathbb R^2}(\partial_x^{-1}\ff_y)^2\dd x\dd y=\alpha\int_{\mathbb R^2}(D_x^\alpha\ff)^2\dd x\dd y
	\]
	So using the first equation we obtain the relations
	\begin{align*}
		\int_{\mathbb R^2}(D_x^\alpha\ff)^2\dd x\dd y&=\frac1{\alpha}\int_{\mathbb R^2}\ff f(\ff)-2F(\ff)\dd x\dd y\\
		\int_{\mathbb R^2}(\partial_x^{-1}\ff_y)^2\dd x\dd y&=\frac1{2}\int_{\mathbb R^2}\ff f(\ff)-2F(\ff)\dd x\dd y\\
		\int_{\mathbb R^2}c\ff^2\dd x\dd y&=\frac1{2\alpha}\int_{\mathbb R^2}(\alpha-2)\ff f(\ff)+(2\alpha+4)F(\ff)\dd x\dd y
	\end{align*}
	\end{corollary}
	
	\section{Stability and Instability of Ground States}\label{sect-stability}
	
	In this section we investigate both analytically and numerically the stability of ground state solitary waves. We first recall that a subset $D$ of $X_\al$ is said to be stable with respect to \eqref{main-fifthkp} if for any
	$\epsilon>0$ there exists some $\delta > 0$ such that, for any $u_0\in B_\delta (D)$ ($\delta$-neighborhood of $D$), the solution $u$ of \eqref{main-fifthkp} with $u(0) = u_0$ satisfies
	$u(t)\in B_\epsilon(D) $ for all $t > 0$. Otherwise, we say $D$ is unstable. We will consider both the stability of the set of ground states with speed $c$, namely
	\[
	G_c=\{\ff\in X_\al\setminus\{0\},\;S(\ff)=m,\;P(\ff)=0\},
	\]
	as well that of the orbit of a particular ground state $\ff\in G_c$ defined by
	\[
	\mathcal{O}(\ff)=\{\ff(\cdot-\tau,\cdot):\tau\in\mathbb R\}.
	\]
	The following stability and instability results follow from the arguments in \cite{esfahani-levan-dpde}, and we omit their proofs.
	\subsection{Stability}
	 We first state state our result in the case $\mo>0$.
	\begin{theorem}\label{stability}
	Let $\mo>0$ and $\po>\poo$.
		 Define $d(c)=S(\ff)$, where $\ff\in G_c$. If $d''(c)>0$ then $G_c$ is stable.
	\end{theorem}
 The proof of Theorem \ref{stability} does not directly apply to the case where $\mu_1 < 0$, and addressing this scenario requires more intricate modifications.
 	\begin{theorem}\label{T:stability}
 	Let	$\mo<0<\moo$, $\po>\poo$, and $c\in(0,c_\ast)$ such that \eqref{exist-cond} holds. If $d''(c)>0$, then $G_c$ is stable.
 \end{theorem}
 
However, we also establish the stability of ground states by relaxing the convexity condition when the wave speed is close to $c_\ast$.
 
		\begin{theorem}\label{stab-theopq}
Let	$\mo<0<\moo$ and $\po>\poo$ such that \eqref{exist-cond} holds.	There exists $\{c_n\}\subset(0,c_\ast)$ such that $c_n\to c_\ast$ as $n\to\infty$ and the set $G_{c_n}$ of all ground states is stable for all $n$.
	\end{theorem}
	
	From now on, we   present $\fp$ and $S$ by $\fp_c$ and $S_c$, receptively, to insist their dependence on $c$.
	\begin{lemma}
		There exists $C>0$ such that for any ground state  $\ff_{c_1}$ and $\ff_{c_2}$ of \eqref{new-min-highlow-t} corresponding to $c_1<c_2<c_\ast$,
		\begin{equation}\label{left}
			\begin{split}
				\fd(c_1)&\leq\fd(c_2)-M(\ff_{c_2})(c_2-c_1)+C\frac{\paar{M(\ff_{c_2})}^2}{\fd(c_2)}(c_2-c_1)^2.
			\end{split}
		\end{equation}
		Moreover,
		\begin{equation}\label{right}
			\fd(c_2) \leq\fd(c_1)+M(\ff_{c_1})(c_2-c_1)+C\frac{\paar{M(\ff_{c_1})}^2}{\fd(c_1)}(c_2-c_1)^2 
		\end{equation}
		provided $c_2-c_1$ is sufficiently close to zero.
	\end{lemma}
	\begin{proof}
		For any $u\in X_\al$ and $\lam>0$, define $u_\lam(x,y)=u(\sqrt{\lam}x,\lam y)$. First we note from $\fp_{c_2}(\ff_{c_2})=0$ that
		\[
		\fp_{c_1}(\ff_{c_2,\lam})=\frac{\lam^{-\frac32}}{6}\paar{(\lam-1)\norm{\ff_{c_2}}_{\dot{X}_\al}^2-3(c_2-c_1)\norm{\ff_{c_2}}_\lt^2
		}.
		\]
		Then, $\fp_{c_1}(\ff_{c_2,\lam_0})=0$, where
		\[
		\lam_0=1+\frac{6(c_2-c_1)M(\ff_{c_2})}{\norm{\ff_{c_2}}_{\dot{X}_\al}^2},
		\]
		thereby  
		\begin{equation}\label{left-1}
			\fd(c_1)=\tilde\fd(c_1)\leq\frac\al{\al+2}\norm{\ff_{c_2,\lam_0}}_{\dot{X}_\al}^2=\lam_0^{-\frac12}\fd(c_2).
		\end{equation}	
		Now, we have from the Taylor expansion of $\lam_0^{-\frac12}$ about $1$ for some $\tau\in(0,1)$ that
		\[
		\begin{split}
			\lam_0^{-\frac12}&=1-\frac{(c_2-c_1)M(\ff_{c_2})}{\fd(c_2)}
			+\frac38\paar{\frac{6(c_2-c_1)M(\ff_{c_2})}{\norm{\ff_{c_2}}_{\dot{X}_\al}^2}}^2\paar{1+\frac{6\tau(c_2-c_1)M(\ff_{c_2})}{\norm{\ff_{c_2}}_{\dot{X}_\al}^2}}^{-\frac52}\\&
			\leq 
			1-\frac{(c_2-c_1)M(\ff_{c_2})}{\fd(c_2)}
			+C\paar{\frac{(c_2-c_1)M(\ff_{c_2})}{\fd(c_2)}}^2. 
		\end{split} 
		\]
		By combining the above inequality with \eqref{left-1}, we  arrive at \eqref{left}.
		
		Similarly, we have
		\[
		\fp_{c_2}(\ff_{c_1,\lam})=
		\frac{\lam^{-\frac32}}{6}
		\paar{(\lam-1)\norm{\ff_{c_1}}_{\dot{X}_\al}^2+3(c_2-c_1)\norm{\ff_{c_1}}_\lt^2
		}. 
		\]
		This shows that $\fp_{c_2}(\ff_{c_1,\lam_1})=0$ provided $c_2-c_1$ is small enough, where
		\[
		\lam_1=1-\frac{6(c_2-c_1)M(\ff_{c_1})}{\norm{\ff_{c_1}}_{\dot{X}_\al}^2}\in(0,1).
		\]
		Hence, it holds that
		\begin{equation}\label{right-1}
			\fd(c_2) \leq\frac\al{\al+2}\norm{\ff_{c_1,\lam_1}}_{\dot{X}_\al}^2=\lam_1^{-\frac12}\fd(c_1).
		\end{equation}
		Another application of the Taylor expansion for $\lam_0^{-\frac12}$ about $1$ reveals for some $\tau\in(0,1)$ that
		\[
		\begin{split}
			\lam_1^{-\frac12}&=1+\frac{(c_2-c_1)M(\ff_{c_1})}{\fd(c_1)}
			+\frac38\paar{\frac{6(c_2-c_1)M(\ff_{c_1})}{\norm{\ff_{c_1}}_{\dot{X}_\al}^2}}^2\paar{1-\frac{6\tau(c_2-c_1)M(\ff_{c_1})}{\norm{\ff_{c_1}}_{\dot{X}_\al}^2}}^{-\frac52}\\&
			\leq 
			1-\frac{(c_2-c_1)M(\ff_{c_1})}{\fd(c_1)}
			+C\paar{\frac{(c_2-c_1)M(\ff_{c_1})}{\fd(c_1)}}^2, 
		\end{split} 
		\]
		provided that $c_2-c_1$ is close to zero. Inserting the above inequality into \eqref{right-1} yields \eqref{right}.
	\end{proof}
	\begin{corollary}\label{cont-insc-d}
		The function $\fd(c)$ is strictly increasing  and continuous on $(0,c_\ast)$.
	\end{corollary}
	
	\begin{lemma}\label{lemma-st-3}
		Let $c_0\in(0,c_\ast)$ and $\ff_{c_0}$ be a ground state of \eqref{gkp}. If 
		\begin{equation}\label{assump-lem-1}
			\fd(c)-\fd(c_0)> M(\ff_{c_0})(c-c_0)
		\end{equation} 
		for all $c\in(0,c_\ast)$ with $c\neq c_0$, then for any $\varepsilon\in(0,c_0)$,  there exists $\delta>0$ such that if $\psi\in X$ satisfies 
		\begin{equation}
			\norm{\psi-\ff_{c_0}}_{X_\al}\leq\delta,
		\end{equation}
		the solution $u\in C([0,T);X_\al)$ of \eqref{main-fifthkp} with $u(0)=\psi$ satisfies
		\begin{equation}\label{claim-lem-1}
			\fd(c_0-\varepsilon)<\frac\al{\al+2}\|u(t)\|_{\dot{X}_\al}^2<\fd(c_0+\varepsilon)
		\end{equation}
		for all $0<t<T$.
	\end{lemma}
	We postpone the proof of Lemma \ref{lemma-st-3} and first complete the proof of Theorem \ref{stab-theopq}.
	
	\begin{proof}[Proof of Theorem \ref{stab-theopq}.]
		The proof is divided into two Steps.

		\textbf{Step 1.} We first show that there exists $\{c_n\}\subset(0,c_\ast)$ and $\{Z_n\}\subset(0,\infty)$ such that $Z_n\to\infty$, $c_n\to c_\ast$ as $n\to\infty$ and
		\begin{equation}\label{claim-s1}
			\fd(c)-\fd(c_n)>Z_n(c-c_n)
		\end{equation}
		for all $n$ and all $c\in(0,c_\ast)$ with $c\neq c_n$.
		
		Choose a sequence $\{a_n\}\subset(1,\infty)$ such that $a_n\to\infty$, and define $b_n=c_\ast^{-1}a_n\fd((1-a_n^{-1})c_\ast)$. Then, is is seen from \eqref{expl-dc} that $b_n\to\infty$ as $n\to\infty$. Taking $\ell\geq2$, it follows for any $c\in(0,(1-\ell a_n^{-1}))$ that
		\[
		\begin{split}
			\fh_n(c):=\fd(c)-b_nc-c^2&>-b_nc -c^2>-b_n(1-\ell a_n^{-1})c_\ast-\paar{(1-\ell a_n^{-1})c_\ast}^2\\
			&>
			b_n(a_n^{-1}(\ell-1))c_\ast-b_nc_\ast(1-a_n^{-1})-\paar{(1-a_n^{-1})c_\ast}^2\\&\geq \fh_n((1-a_n^{-1})c_\ast).
		\end{split}
		\]
		This shows from $\lim_n\fh_n(c)=\infty$ that there exists a minimum point $c_n\geq (1-a_n^{-1})c_\ast$ to $\fh_n(c)$ such that $c_n\to c_\ast$ as $n\to\infty$. Next we show for any $n$ and any $c\in(0,c_\ast)$ with $c\neq c_n$ that 
		\[
		\fd(c)>\fd(c_n)+(b_n+2c_n)(c-c_n).
		\]
		Since $c_n$ is a minimum point of $\fh_n$, then
		\[
		\begin{split}
			\fd(c)&=\fh_n(c)+b_nc+c^2
			>
			\fh_n(c_n)+b_nc_n+c^2_n
			+(b_n+2c_n)(c-c_n)\\&
			=\fd(c_n)+(b_n+2c_n)(c-c_n)
		\end{split}.
		\]
		By define $Z_n=2c_n+b_n$, we get \eqref{claim-s1}. Moreover, for any ground state $\ff_n\in G_{c_n}$, we have from \eqref{left} that
		\[
		M(\ff_n)(c_n-c)\leq \fd(c_n-\fd(c))+o(c_n-c)\leq Z_n(c_n-c)+o(c_n-c),
		\]
		provided $c_n-c$ is close to zero. Whence we obtain $M(\ff_n)\leq Z_n$ by taking $c\to c_n$. By using \eqref{right}, it is similarly deduced that $M(\ff_n)\geq Z_n$. Thus,
		\[
		M_n(\ff_n)=Z_n.
		\]

		\textbf{Step 2.} Completion of the proof.
		
		Let $\{c_n\}$ be the same sequence obtained in Step 1. We show for any $c_n$,  the set  $G_{c_n}$ of all ground states of \eqref{gkp} corresponding with $c_n$ is stable. By contradiction, assume that $G_{c_N}$ is unstable for some $N$. Thus, there exists $\epsilon_0>0$ such that for any $m\in\N$, there exists $\psi_m\in X_\al$ and $t_m\in[0,T_m)$ such that
		\begin{equation}\label{contrad-st}
			\begin{split}
				\inf_{\ff\in G_{c_N}}\norm{\psi_m-\ff}_{X_\al}\leq\frac1m,\quad\text{and}\quad
				\inf_{\ff\in G_{c_N}}\norm{u_m(t_m)-\ff}_{X_\al}\geq\epsilon_0,
			\end{split}
		\end{equation}
		where $u_m\in C([0,T_m);X_\al)$ is the unique solution of \eqref{main-fifthkp} with $u_m(0)=\psi_m$.
		The conservations of energy and momentum, Step 1, Lemma \ref{lemma-st-3} and Corollary \ref{cont-insc-d} show that
		\[
		\begin{split}
			&	S_{c_N}(u_m(t_m)) =S_{c_N}(\psi_m )\to \fd(c_N),\qquad
			\frac\al{\al+2}\norm{u_m(t_m)}_{\dot{X}_\al}^2\to\fd(c_N)
		\end{split}
		\]
		as $m\to\infty$. Hence, we obtain that
		\[
		\fp_{c_N}(u_m(t_m))=	S_{c_N}(u_m(t_m))-
		\frac\al{\al+2}\norm{u_m(t_m)}_{\dot{X}_\al}^2\to0
		\]
		as $m\to\infty$. By repeating the proof of Theorem \ref{ex-theo-pq}, we can find a subsequence of $\{u_m(t_m)\}$, denoted by the same symbol, a sequence $\{z_m\}\subset\rt$, and  a ground state $\ff\in X_\al$ such that $u_m(t_m,\cdot+z_m)$ strongly converges to $\ff$ in $X_\al$. This means that
		\[
		\lim_{m\to\infty}\norm{u_m(t_m)-\ff(\cdot-z_m)}_{X_\al}=0.
		\]
		This is a contradiction to \eqref{contrad-st}, and the proof of Theorem \ref{stab-theopq} is complete.
	\end{proof}
	
	\begin{proof}[Proof of Lemma \ref{lemma-st-3}.]
		Let $\delta>0$ and $\varepsilon\in(0,c_0)$, and $\psi\in X_\al$ such that
		\[
		\norm{\psi-\ff_{c_0}}_{X_\al}\leq\delta.
		\]
		The facts $\fp_{c_0}(\ff_{c_0})=0$ and $S_{c_0}(\ff_{c_0})=\frac\al{\al+2}\norm{\ff_{c_0}}_{\dot{X}_\al}^2$ imply that
		\begin{equation}
			\fd(c_0)=
			\frac\al{\al+2}\norm{\psi}_{\dot{X}_\al}^2+O(\delta).
		\end{equation}
		Hence, by choosing $\delta=\delta(\varepsilon)$ sufficiently small, we derive from Corollary \ref{cont-insc-d} and \eqref{assump-lem-1} that
		\begin{equation} \label{claim-lem-1-0}
			\fd(c_0-\varepsilon)<\frac\al{\al+2}\|\psi\|_{\dot{X}_\al}^2<\fd(c_0+\varepsilon).
		\end{equation}
		Define the sets
		\begin{equation}
			\begin{split}
				&\Gamma_c^+=\sett{u\in X_\al,\;S_c(u)<\fd(c),\; 3\fd(c)>\norm{u}_{\dot{X}_\al}^2 },\\
				&\Gamma_c^-=\sett{u\in X_\al,\;S_c(u)<\fd(c),\; 3\fd(c)<\norm{u}_{\dot{X}_\al}^2 }.
			\end{split}
		\end{equation}
		It is straightforward to see that the submanifolds $\Gamma_c^\pm$ are invariant under the flow of the Cauchy problem associated with \ref{main-fifthkp}. Indeed, if $\psi\in \Gamma_c^+$ and $u(t)$ is a unique solution of \eqref{main-fifthkp} with $u(0)=\psi$. Then, the invariants $E$ and $M$ show that $S_c(u(t))<\fd(c)$ for all $t\in[0,T)$. By using
		\[
		\frac\al{\al+2}\norm{u(t)}_{\dot{X}_\al}^2=S_c(u(t))-\fp_c(u(t))<\fd(c)-\fp_c(u(t)),
		\]
		it suffices to show $\fp_c(u(t))\geq0$ for all $t$. Suppose by contradiction that there exists $t_0\in(0,T)$ such that $\fp(u(t_0))<0$. Since $\psi\in\Gamma_c^+$, then $\fp_c(\psi)>0$, so that there exists $t_1\in(0,t_0)$ such that $\fp_c(u(t_1))=0$. This contradicts the fact $S_c(u(t_1))\geq\fd(c)$. The invariance of $\Gamma_c^-$ is proved similarly.
		
		Hence, to complete the proof, it is enough to prove that
		\begin{equation}\label{assu-lem-2}
			S_{c_0\pm\varepsilon}(\psi)<\fd(c_0\pm\varepsilon).
		\end{equation}
		We only prove \eqref{assu-lem-2} for $c_0+\varepsilon$, and the case $c_0-\varepsilon$ can be analogously proved. By using \eqref{assump-lem-1} and the Taylor expansion about $\ff_{c_0}$, we obtain that
		\[ \begin{split}
			S_{c_0+\varepsilon}(\psi)
			&=S_{c_0+\varepsilon}(\ff_{c_0})+
			\scal{ S'_{c_0+\varepsilon}(\ff_{c_0}),\psi-\ff_{c_0}}+o(\delta)\\
			&=S_{c_0}(\ff_{c_0})+\varepsilon M(\ff_{c_0})+
			\scal{ S'_{c_0+\varepsilon}(\ff_{c_0}),\psi-\ff_{c_0}}+o(\delta)\\
			&\leq \fd(c_0)+\varepsilon M(\ff_{c_0})+2\epsilon\delta M(\ff_{c_0})+o(\delta). 
		\end{split}
		\]
		This implies that
		\[\begin{split}
			S_{c_0+\varepsilon}(\psi)-\fd(c_0+\varepsilon)&< \fd(c_0)-\fd(c_0+\varepsilon)+\varepsilon M(\ff_{c_0}) +2\varepsilon\delta M(\ff_{c_0})+o(\delta)\\&=:C_{c_0}(\varepsilon)+2\varepsilon\delta M(\ff_{c_0})+o(\delta).
		\end{split}
		\]
		Since $C_{c_0}(\varepsilon)<0$, by taking $\delta=\delta(\varepsilon,c_0)>0$ sufficiently small we arrive at \eqref{assu-lem-2}.
	\end{proof}

We now shift our focus to the proof of Theorem \ref{T:stability}, which incorporates the essential convexity condition for establishing stability. The proof is obtained through an argument similar to one in \cite{esfahani-levan-dpde} and is complemented by the following crucial lemmas; therefore, we omit the details.

	\begin{lemma}\label{L:c_map} 
		There exists $\epsilon>0$ and a continuous map $c:U_\epsilon(G_c)\to\mathbb R$ such that $d(c(u))=\frac\al{\al+2}\|u\|_{\dot{X}_\al}^2$ for each $u\in U_\epsilon(G_c)$.
	\end{lemma}
	
	\begin{proof} Since $\fd(c)$ is continuous and strictly increasing, it follows that $d^{-1}$ is continuous and strictly increasing. Since
		\[
		d(c)=\fd(c)=\tilde\fd(c)=S_c(\ff)
		\]
		for any $\ff\in G_c$, it follows from the continuity of $S_c$ and $\fp_c$  that there exists $\epsilon>0$ such that $S_c(u)$ is in the range of $d$ for any $u\in U_\epsilon(G_c)$. We may therefore define
		\begin{equation}\label{c_map_definition}
			c(u)=d^{-1} \paar{\frac\al{\al+2}\|u\|_{\dot{X}_\al}^2} 
		\end{equation}
		for any such $u$. Continuity of this map follows from the continuity of $d^{-1}$ and $X_\al$-norm.
	\end{proof}
	
	The following lemma is the key step in the stability proof.
	
	\begin{lemma}\label{L:stability_bound} Suppose $d''(c)>0$. Then there exists some $\epsilon>0$ such that  we have for any $\ff\in G_c$ and any $u\in U_\epsilon(G_c)$
		$$E(u)- E(\ff) +c(u)(M(u)-M(\ff))\geq \dfrac{1}{4} d''(c) (c(u)-c)^2.$$
	\end{lemma}

	\begin{proof}Following the \cite[Lemma 4.3]{esfahani-levan-dpde}, we have
		\[
		d(c_1) \geq d(c)+M(\ff)(c_1-c)+ \frac{1}{4}d''(c) (c_1- c)^2
		\]
		for $c_1$ sufficiently close to $c$. It then follows that
		\begin{align*}
			d(c(u))&\geq d(c)+M(\ff)(c(u)-c)+\frac{1}{4}d''(c) (c(u)- c)^2\\
			&=E(\ff)+cM(\ff)+M(\ff)(c(u)-c)+\frac{1}{4}d''(c) (c(u)- c)^2\\
			&=E(\ff)+c(u)M(\ff)+\frac{1}{4}d''(c) (c(u)- c)^2
		\end{align*}
		for all $u\in U_\epsilon(G_c)$ provided $\epsilon>0$ is sufficiently small. Now we consider two cases.
		
		\noindent{\bf Case 1:} $\fp_{c(u)}(u)>0$. In this case, we have
		\[
		d(c(u))= S_{c(u)}(u)- \fp_{c(u)}(u)< S_{c(u)}(u)
		=E(u)+c(u)M(u)
		\]
		and combining this with the previous inequality gives
		\[
		E(u)+c(u)M(u)> E(\ff)+c(u)M(\ff)+\frac{1}{4}d''(c) (c(u)- c)^2,
		\]
		and thus proves the claim.
		
		\noindent{\bf Case 2:} $\fp_{c(u)}(u)\leq0$. In this case, let $\ff\in G_{c(u)}$. Then, since $\ff$ minimizes
		$S_{c(u)}(v)$ over all $v$ with $\fp_{c(u)}(v)\leq0$,  we have  
		\begin{align*}
			d(c(u))&=S_{c(u)}(\ff)\leq S_{c(u)}(u)\\
			&=S_{c(u)}(u)- \fp_{c(u)}(u)\\
			&=d(c(u)).
		\end{align*}
		Consequently, all of the aforementioned quantities are equivalent, indicating that $u$ attains the same minimum value as $\varphi$. As a result, $u$ also belongs to $G_{c(u)}$. This leads to the equation:
		\[
		d(c(u)) = S_{c(u)}(u) = E(u) + c(u)M(u),
		\]
		which, in turn, establishes the validity of the claim.	
	\end{proof}
	\subsection{Instability}
	\begin{theorem}\label{instability} Let $\ff\in G_c$. If there exists $\phi\in L^2$ such that $\phi_x\in X_s$ for some $s>3/2$, $\phi_{xx}\in X_\al$, $\left<\phi_x,\ff\right>=0$ and $\left<S''(\ff)\phi_x,\phi_x\right><0$, then $\mathcal{O}(\ff)$ is unstable.
	\end{theorem}
	
	\vskip 10pt
	Assuming there exists a choice of $\ff(c)\in G_c$ that is $C^1$ as a mapping from $\mathbb R^+$ to $X_\al$, the function $\phi_x=\ff-\frac{2d'(c)}{d''(c)}\frac{d\ff(c)}{dc}$ satisfies the hypotheses of Theorem \ref{instability} and as a consequence, we have the following converse of Theorem \ref{instability}. 
	\vskip 10pt
	\begin{corollary}\label{d''_instability} If $d''(c)<0$ then $\mathcal{O}(\ff)$ is unstable.
	\end{corollary}
	\vskip 10pt
	Due to the inhomogeneity of the nonlinear term, we do not have explicit formulas for $d(c)$. The numerical results presented at the end of this section provide approximate intervals of stability and instability for various nonlinear terms. We first consider the following alternate instability criteria.

	\begin{corollary}\label{instability_criterion} Let $\ff\in G_c$ and define 
		\[
		\mathcal{K}_f(\ff)=\int_{\mathbb R^2}\ff f(\ff)-\ff^2f'(\ff)\dd x\dd y
		+2\left((\alpha+2)a^2-4a+3\right)\int_{\mathbb R^2}\ff f(\ff)-2F(\ff)\dd x\dd y.
		\]
		If $\mathcal{K}_f(\ff)<0$ for some $a\in\mathbb R$ then $\mathcal{O}(\ff)$ is unstable. 
	\end{corollary}
	
	\vskip 10pt
	\begin{proof} Let $\phi_x=\ff+ax\ff_x+by\ff_y$ where $a+b=2$. A straightforward integration by parts gives
		\[
		\left<\phi_x,\ff\right>=\left(1-\frac12(a+b)\right)\int_{\mathbb R^2}\ff^2\dd x\dd y
		\]
		so $\left<\phi_x,\ff\right>=0$ when $a+b=2$. Next set $\mathcal{L}=S''(\ff)$. Then
		\[
		\mathcal{L}=D_x^{2\alpha}+c+\partial_x^{-2}\partial_y^2-f'(\ff)
		\]
		we have $\mathcal{L}\ff=f(\ff)-\ff f'(\ff)$, so 
		\[
		\left<\mathcal{L}\ff,\ff\right>=\int_{\mathbb R^2}\ff f(\ff)-\ff^2 f'(\ff)\dd x\dd y=
		\mu_1(1-p_1^2)K_1(\ff)+\mu_2(1-p_2^2)K_2(\ff)
		\]
		and
		\begin{align*}
			\left<\mathcal{L}\ff,x\ff_x\right>&=\left<\mathcal{L}\ff,y\ff_y\right>
			=\int_{\mathbb R^2}\ff f(\ff)-2F(\ff)\dd x\dd y\\
			&=\mu_1(p_1-1)K_1(\ff)+\mu_2(p_2-1)K_2(\ff).
		\end{align*}
		Next, since
		\begin{align*}
			\mathcal{L}(x\ff_x)&=D_x^{2\alpha}(x\ff_x)+cx\ff_x+\partial_x^{-2}\partial_y^2(x\ff_x)-f'(\ff)x\ff_x\\
			&=xD_x^{2\alpha}\ff_x+2\alpha D_x^{2\alpha}\ff+cx\ff_x+\partial_y^2(x\partial_x^{-1}\ff-2\partial_x^{-2}\ff)-f'(\ff)x\ff_x\\
			&=x\left(D_x^{2\alpha}\ff_x+c\ff_x+\partial_y^2\partial_x^{-2}\ff_x-f'(\ff)\ff_x\right)+2\alpha D_x^{2\alpha}\ff
			-2\partial_y^2\partial_x^{-2}\ff\\
			&=2\alpha D_x^{2\alpha}\ff-2\partial_y^2\partial_x^{-2}\ff\\
			&=2\alpha f(\ff)-2c\alpha\ff-(2\alpha+2)\partial_y^2\partial_x^{-2}\ff
		\end{align*}
		we have
		\begin{align*}
			\left<\mathcal{L}(x\ff_x),x\ff_x\right>&=\int_{\mathbb R^2}\left(2\alpha f(\ff)-2c\alpha\ff-(2\alpha+2)\partial_y^2\partial_x^{-2}\ff\right)x\ff_x\dd x\dd y\\
			&=-2\alpha\int_{\mathbb R^2}F(\ff)\,dx\,dy+c\alpha\int_{\mathbb R^2}\ff^2\,dx\,dy+(3\alpha+3)\int_{\mathbb R^2}(\partial_x^{-1}\ff_y)^2\dd x\dd y
		\end{align*}
		and
		\begin{align*}
			\left<\mathcal{L}(x\ff_x),y\ff_y\right>&=\int_{\mathbb R^2}\left(2\alpha f(\ff)-2c\alpha\ff-(2\alpha+2)\partial_y^2\partial_x^{-2}\ff\right)y\ff_y\dd x\dd y\\
			&=-2\alpha\int_{\mathbb R^2}F(\ff)\dd x\dd y+c\alpha\int_{\mathbb R^2}\ff^2\dd x\dd y-(\alpha+1)\int_{\mathbb R^2}(\partial_x^{-1}\ff_y)^2\dd x\dd y
		\end{align*}
		Using the Pohozaev identities (see Corollary \ref{poho-c}) these become
		\[
		\left<\mathcal{L}(x\ff_x),x\ff_x\right>=\frac{4\alpha+1}{2}\int_{\mathbb R^2}\ff f(\ff)-2F(\ff)\dd x\dd y
		\]
		and
		\[
		\left<\mathcal{L}(x\ff_x),y\ff_y\right>=-\frac32\int_{\mathbb R^2}\ff f(\ff)-2F(\ff)\dd x\dd y.
		\]
		Finally, since $\mathcal{L}(y\ff_y)=2\partial_x^{-2}\ff_{yy}$, 
		\[
		\left<\mathcal{L}(y\ff_y),y\ff_y\right>
		=\int_{\mathbb R^2}(\partial_x^{-1}\ff_y)^2\dd x\dd y
		=\frac1{2}\int_{\mathbb R^2}\ff f(\ff)-2F(\ff)\dd x\dd y.
		\]
		Combining these gives
		\begin{align*}
			\left<\mathcal{L}\phi_x,\phi_x\right>&=\left<\mathcal{L}\ff,\ff\right>+a^2\left<\mathcal{L}(x\ff_x),x\ff_x\right>
			+b^2\left<\mathcal{L}y\ff_y),y\ff_y\right>\\
			&\quad+2a\left<\mathcal{L}\ff,x\ff_x\right>+2b\left<\mathcal{L}\ff,y\ff_y\right>
			+2ab\left<\mathcal{L}(x\ff_x),y\ff_y\right>\\
			&=\int_{\mathbb R^2}\ff f(\ff)-\ff^2f'(\ff)\dd x\dd y
			+2\left((\alpha+2)a^2-4a+3\right)\int_{\mathbb R^2}\ff f(\ff)-2F(\ff)\dd x\dd y
		\end{align*}
		which shows $\phi_x$ satisfies the hypotheses of Theorem \ref{instability}. 
	\end{proof}
	In our application of Corollary \ref{instability_criterion}, the following estimate will be needed. 
	\begin{lemma}\label{Lp_comparison} Let $\ff$ be a solution of \eqref{gkp} with $f(u)=\mu_1|u|^{p_1-1}u+\mu_2|u|^{p_2-1}u$, where $\mu_1>0$. Then 
		\[
		\int_{\mathbb R^2}|u|^{p_1+1}\dd x\dd y\geq\left(\left(\frac{c}{\mu_1}\right)^\theta-\frac{\mu_2^+}{\mu_1}\right)\int_{\mathbb R^2}|u|^{p_2+1}\dd x\dd y
		\]
		for all $c>0$, where $\theta=\frac{p_1-p_2}{p_1-1}$ and $\mu_2^+=\max\{0,\mu_2\}$.  
	\end{lemma}
	\vskip 10pt
	\begin{proof} Suppose instead that 
		\[
		\int_{\mathbb R^2}|u|^{p_1+1}\dd x\dd y<\left(\left(\frac{c}{\mu_1}\right)^\theta-\frac{\mu_2^+}{\mu_1}\right)\int_{\mathbb R^2}|u|^{p_2+1}\dd x\dd y
		\]
		for some $c>0$. Since $N(\ff)=2I(\ff)\geq c\|\ff\|_{L^2(\mathbb R^2)}^2$, applying H\"older's inequality gives
		\begin{align*}
			\int_{\mathbb R^2}|\ff|^{p_2+1}\dd x\dd y&\leq\left(\int_{\mathbb R^2}u^2\dd x\dd y\right)^\theta
			\left(\int_{\mathbb R^2}u^{p_1+1}\dd x\dd y\right)^{1-\theta}\\
			&\leq \left(\frac{\mu_1}{c}\right)^\theta\left(\int_{\mathbb R^2}|u|^{p_1+1}+\frac{\mu_2}{\mu_1}|u|^{p_2+1}\dd x\dd y\right)^\theta
			\left(\int_{\mathbb R^2}|u|^{p_1+1}\dd x\dd y\right)^{1-\theta}\\
			&<\left(\frac{\mu_1}{c}\right)^\theta\left(\left(\frac{c}{\mu_1}\right)^\theta
			\int_{\mathbb R^2}|u|^{p_2+1}\dd x\dd y\right)^\theta
			\left(\left(\frac{c}{\mu_1}\right)^\theta\int_{\mathbb R^2}|u|^{p_2+1}\dd x\dd y\right)^{1-\theta}\\
			&= \int_{\mathbb R^2}|\ff|^{p_2+1}\dd x\dd y,
		\end{align*}
		a contradiction. 
	\end{proof}
	\vskip 10pt
	\begin{theorem} Let $f(u)=\mu_1|u|^{p_1-1}u+\mu_2|u|^{p_2-1}u$ and let $\ff$ be a ground state with speed $c$. 
		\begin{enumerate}[(a)]
			\item If $\mu_1>0$, $\mu_2\geq0$ and $p_1>p_2\geq \frac{5\alpha+2}{\alpha+2}$, then $\mathcal{O}(\ff)$ is unstable.
			\item If $\mu_1>0$, $\mu_2\geq0$ and $p_1>\frac{5\alpha+2}{\alpha+2}>p_2$, then there exists $c(p_1,p_2,\mu_1,\mu_2)$ such that $\mathcal{O}(\ff)$ is unstable for $c>c(p_1,p_2,\mu_1,\mu_2)$.
			\item If $\mu_1>0$, $\mu_2<0$ and $p_1>\frac{5\alpha+2}{\alpha+2}>p_2$, then $\mathcal{O}(\ff)$ is unstable.
			\item If $\mu_1>0$, $\mu_2<0$ and $p_1>p_2\geq\frac{5\alpha+2}{\alpha+2}$, then there exists $c(p_1,p_2,\mu_1,\mu_2)$ such that $\mathcal{O}(\ff)$ is unstable for $c>c(p_1,p_2,\mu_1,\mu_2)$.
		\end{enumerate}
	\end{theorem}
	
	\vskip 10pt

	\begin{proof} 
		First assume $\mu_1>0$ and $\mu_2\geq0$. 
		The quadratic $(\alpha+2)a^2-4a+3$ is minimized when $a=\frac{2}{\alpha+2}$ and for this choice of $a$ we have 
		\[
		\mathcal{K}_f(\ff)=
		\mu_1\frac{p_1-1}{p_1+1}\left(\frac{5\alpha+2}{\alpha+2}-p_1\right)\|\ff\|_{L^{p_1+1}(\mathbb R^2)}^{p_1+1}+\mu_2\frac{p_2-1}{p_2+1}\left(\frac{5\alpha+2}{\alpha+2}-p_2\right)\|\ff\|_{L^{p_2+1}(\mathbb R^2)}^{p_2+1}
		\]
		This is clearly negative when  $p_1>p_2\geq \frac{5\alpha+2}{\alpha+2}$, which proves (a). If $p_1>\frac{5\alpha+2}{\alpha+2}>p_2$ the first term is negative and the second term is positive. However, by Lemma \ref{Lp_comparison} the first term dominates the second term for large $c$, which proves (b). Next assume $\mu_1>0$ and  $\mu_2\leq0$. If $p_1>\frac{5\alpha+2}{\alpha+2}\geq p_2$, then the first term is negative and the second non-positive, so again $\mathcal{K}_f(\ff)$ is negative. This proves (c). Finally, if $p_1>p_2\geq\frac{5\alpha+2}{\alpha+2}$ the first term is negative and by Lemma \ref{Lp_comparison} dominates the second term for sufficiently large $c$, and thus (d) holds. 
	\end{proof}

	\vskip 10pt
	We conclude this section by presenting results of numerical approximations of $d''(c)$ for the odd nonlinearities $f(u)=|u|^{p_1-1}u\pm |u|^{p_2-1}u$ for various pairs $(p_1,p_2)$ with $p_1>p_2$. The results for even and mixed parity nonlinearities are similar. We use the algorithm presented in \cite{esfahani-levan-kdv5} to numerically approximate ground states and then use these approximations to compute $d''(c)$. The method is inspired by the fact that ground states minimize $S$ subject to $P=0$ and consist of a gradient descent combined with a rescaling to maintain the constraint $P=0$. Figures \ref{F:alpha=1_stable}, \ref{F:alpha=1_mixed} and \ref{F:alpha=1_unstable} show the plots of $d''(c)$ for sums of two powers with $\alpha=1$. They illustrate that when $p_2\geq\frac73$ we have stability for all $c>0$ (within the range of computation performed), when $p_1\leq\frac73$ we have instability for all $c>0$, and when $p_1>\frac73>p_2$ we have stability for small $c$ and instability for large $c$. Figure \ref{F:alpha=2_sums} illustrates the same behavior for $\alpha=2$, where the critical exponent is $s_c=3$. For differences of two powers, the behavior appears to depend on $p_1$ and $p_1+p_2$. For $\alpha=1$ and $p_1<7/3$, Figures \ref{F:alpha=1_diff_stable} and \ref{F:alpha=1_diff_mixed} show that we may either have stability for all $c$ or a transition from instability for small $c$ to stability for large $c$, while for $p_1>7/3$ we appear to have instability for all $c$ (Figure \ref{F:alpha=1_diff_unstable}). Figures \ref{F:alpha=2_diff_stable_mixed} and \ref{F:alpha=2_diff_unstable} show results for differences of powers with $\alpha=2$. We note that these results resemble those obtained by Ohta in \cite{ohta} for the Schr\"odinger equation with double power nonlinearity. 
	
	\begin{figure}[ht!]
		\begin{center}
			\scalebox{.5}{\includegraphics{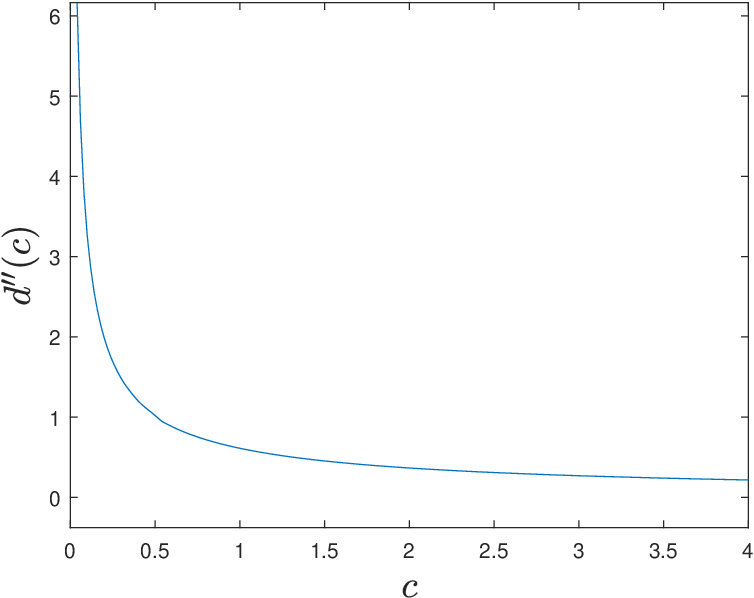}}\qquad
			\scalebox{.5}{\includegraphics{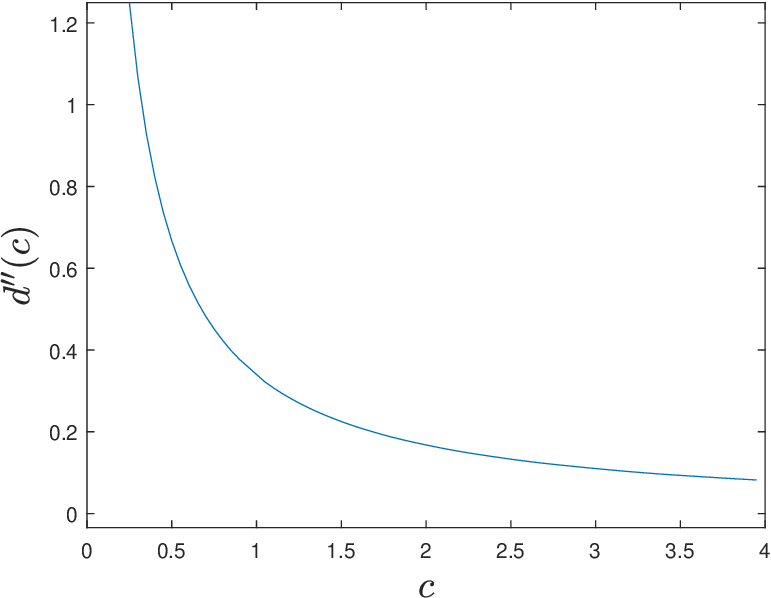}}
		\end{center}
		\caption{Plots of $d''(c)$ for $\alpha=1$ and $f(u)=|u|^{p_1-1}u+|u|^{p_2-1}u$ with $(p_1,p_2)=(2.2,2)$ (left) and $(p_1,p_2)=(7/3,2)$ (right). In both cases we have $d''(c)>0$ for all $c$ in the domain considered.}\label{F:alpha=1_stable}
	\end{figure}

	\begin{figure}[ht!]
		\begin{center}
			\scalebox{.5}{\includegraphics{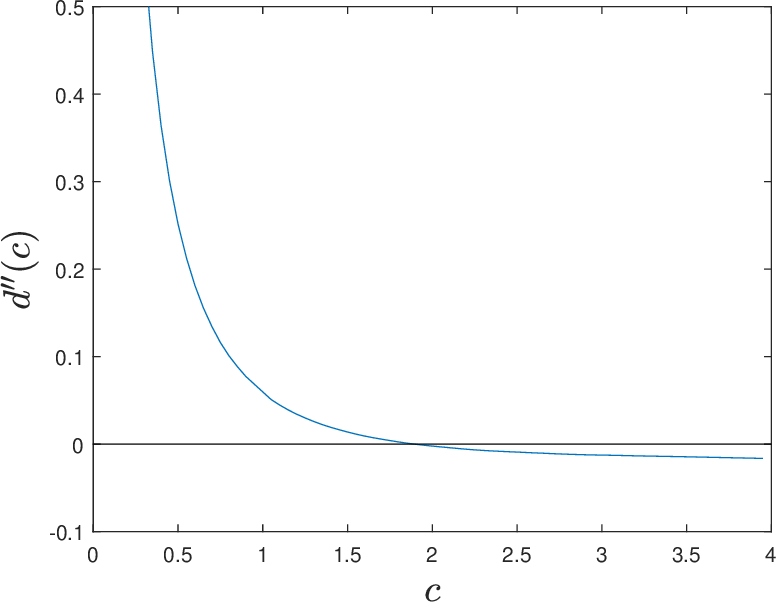}}\qquad
			\scalebox{.5}{\includegraphics{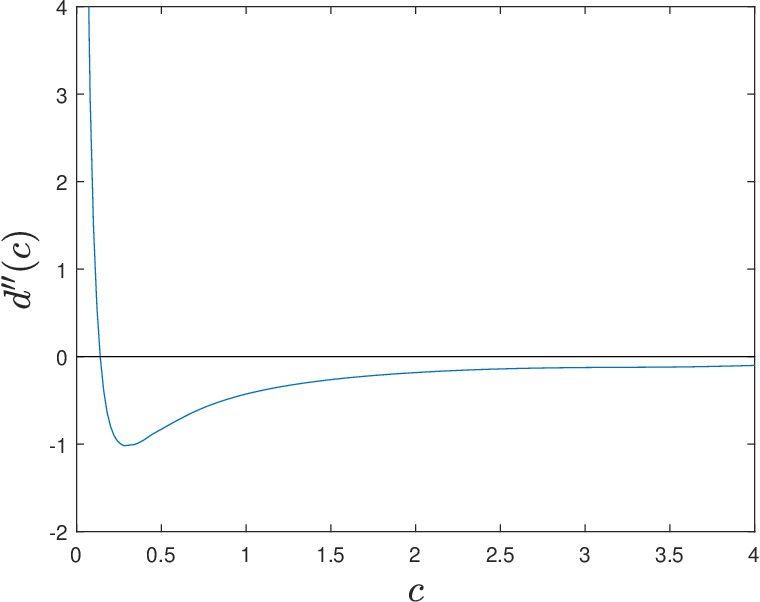}}
		\end{center}
		\caption{Plots of $d''(c)$ for $\alpha=1$ and $f(u)=|u|^{p_1-1}u+|u|^{p_2-1}u$ with $(p_1,p_2)=(2.5,2)$ (left) and $(p_1,p_2)=(3,2)$ (right). In these cases, we have $d''(c)>0$ for small $c$ and $d''(c)<0$ for large $c$.}\label{F:alpha=1_mixed}
	\end{figure}
	
	\begin{figure}[ht!]
		\begin{center}
			\scalebox{.5}{\includegraphics{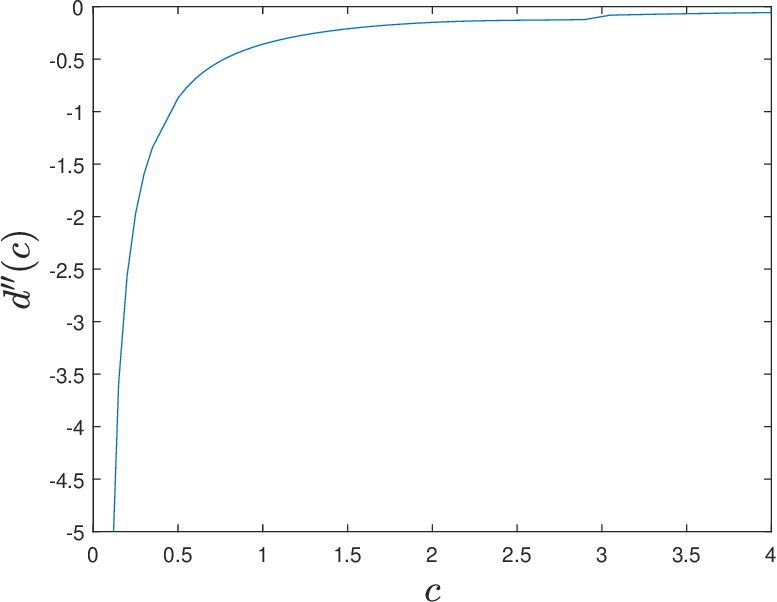}}
			\scalebox{.5}{\includegraphics{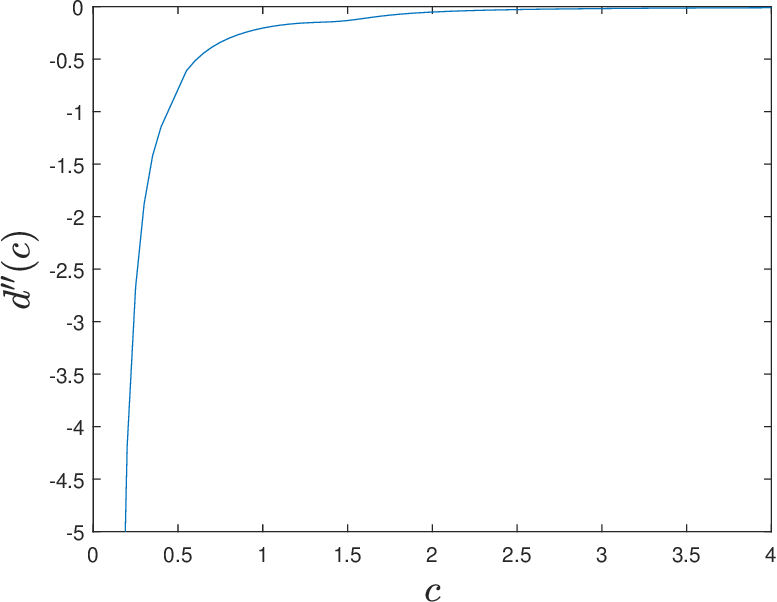}}
		\end{center}
		\caption{Plots of $d''(c)$ for $\alpha=1$ and $f(u)=|u|^{p_1-1}u+|u|^{p_2-1}u$ with $(p_1,p_2)=(3,7/3)$ (left) and $(p_1,p_2)=(4,3)$ (right). In both cases we have $d''(c)<0$ for all $c$ in the domain considered.}\label{F:alpha=1_unstable}
	\end{figure}
	
	\begin{figure}[ht]
		\begin{center}
			\scalebox{.4}{\includegraphics{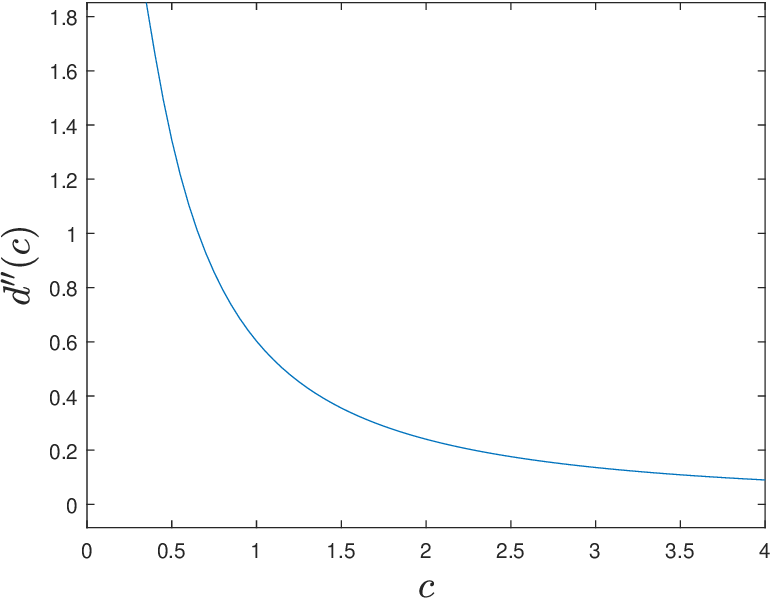}}
			\scalebox{.4}{\includegraphics{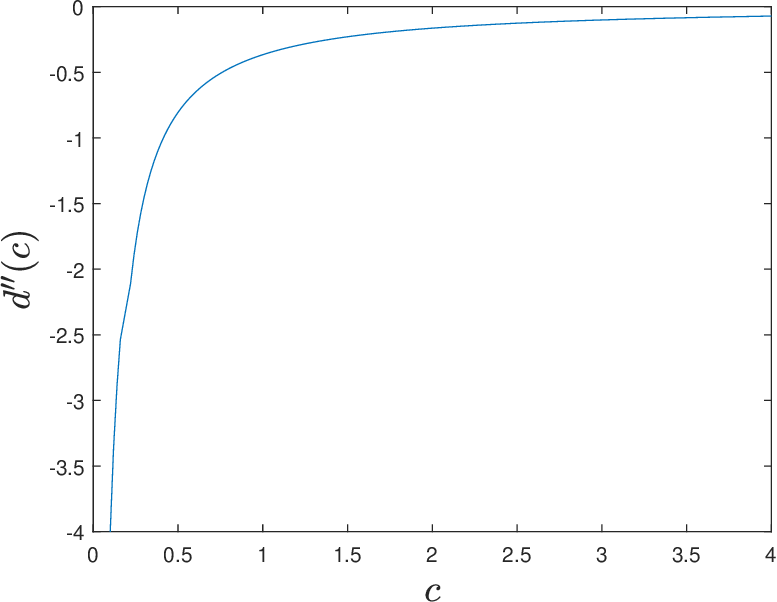}}
			\scalebox{.4}{\includegraphics{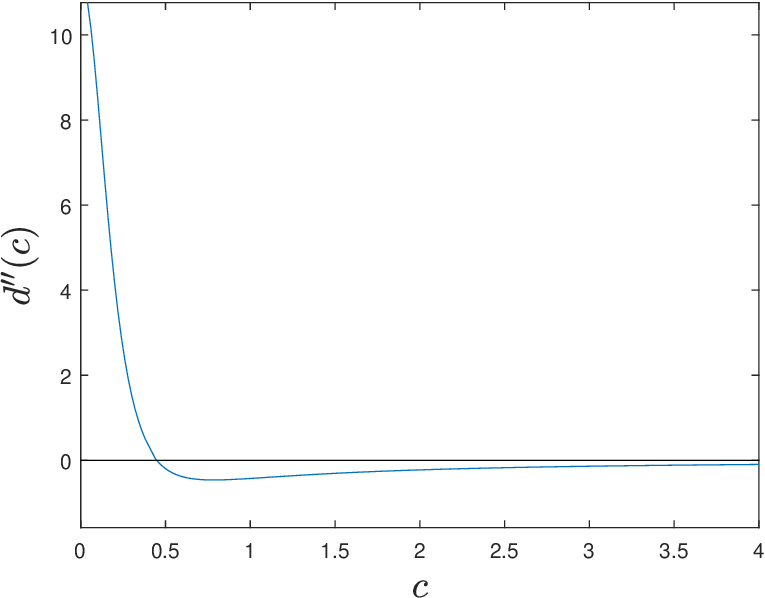}}
		\end{center}
		\caption{Plots of $d''(c)$ for $\alpha=2$ and $f(u)=|u|^{p_1-1}u+|u|^{p_2-1}u$ with $(p_1,p_2)=(3,2)$ (left), $(p_1,p_2)=(4,3)$ (middle) and $(p_1,p_2)=(4,2)$ (right). }\label{F:alpha=2_sums}
	\end{figure}
	
	\begin{figure}[ht!]
		\begin{center}
			\scalebox{.5}{\includegraphics{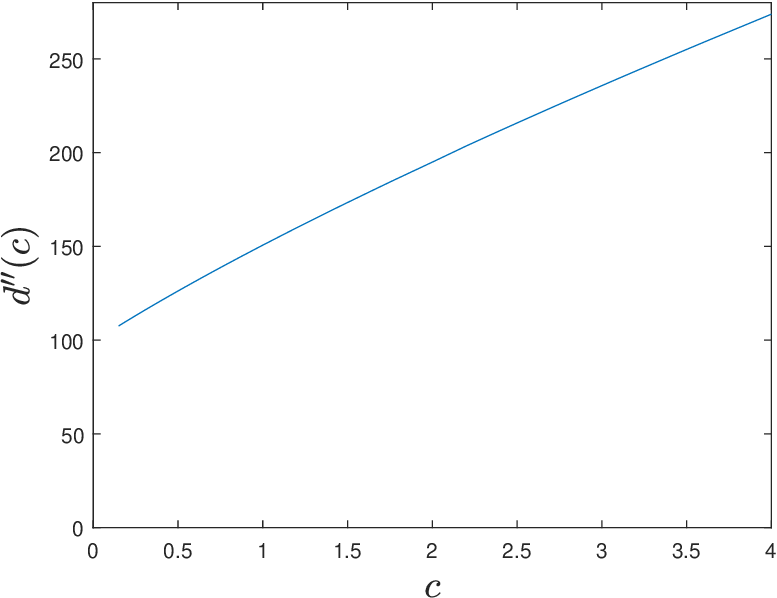}}\qquad
			\scalebox{.5}{\includegraphics{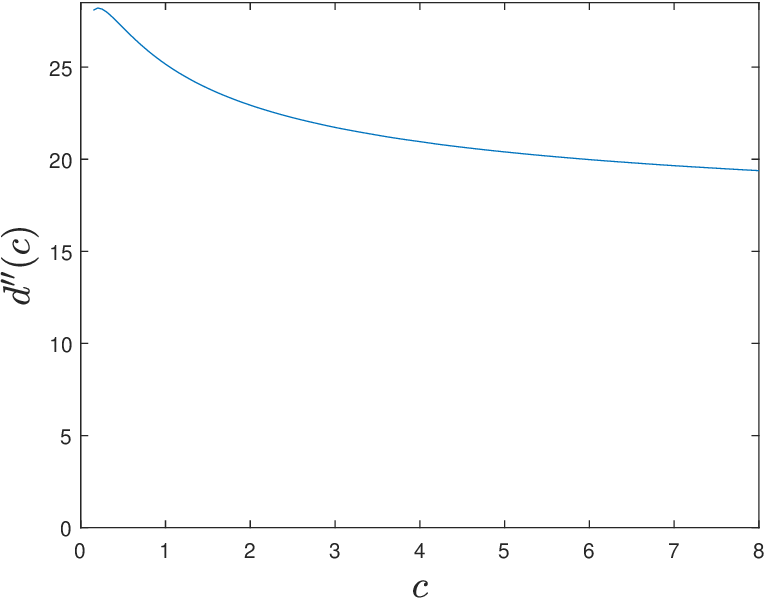}}
		\end{center}
		\caption{Plots of $d''(c)$ for $\alpha=1$ and $f(u)=|u|^{p_1-1}u-|u|^{p_2-1}u$ with $(p_1,p_2)=(1.6,1.2)$ (left) and $(p_1,p_2)=(1.8,1.4)$ (right). }\label{F:alpha=1_diff_stable}
	\end{figure}

	\begin{figure}[ht!]
		\begin{center}
			\scalebox{.5}{\includegraphics{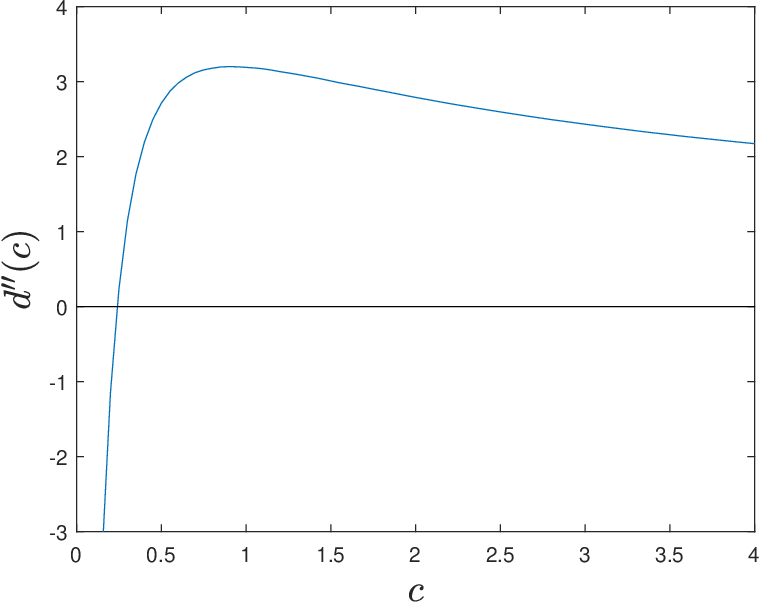}}\quad
			\scalebox{.5}{\includegraphics{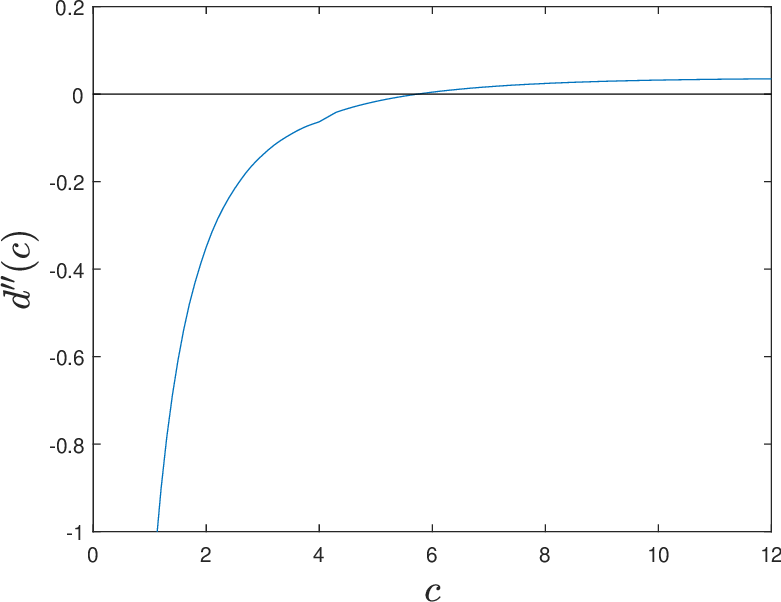}}
		\end{center}
		\caption{Plots of $d''(c)$ for $\alpha=1$ and $f(u)=|u|^{p_1-1}u-|u|^{p_2-1}u$ with $(p_1,p_2)=(2,1.6)$ (left) and $(p_1,p_2)=(2.2,1.8)$ (right). }\label{F:alpha=1_diff_mixed}
	\end{figure}

	\begin{figure}[ht!]
		\begin{center}
			\scalebox{.5}{\includegraphics{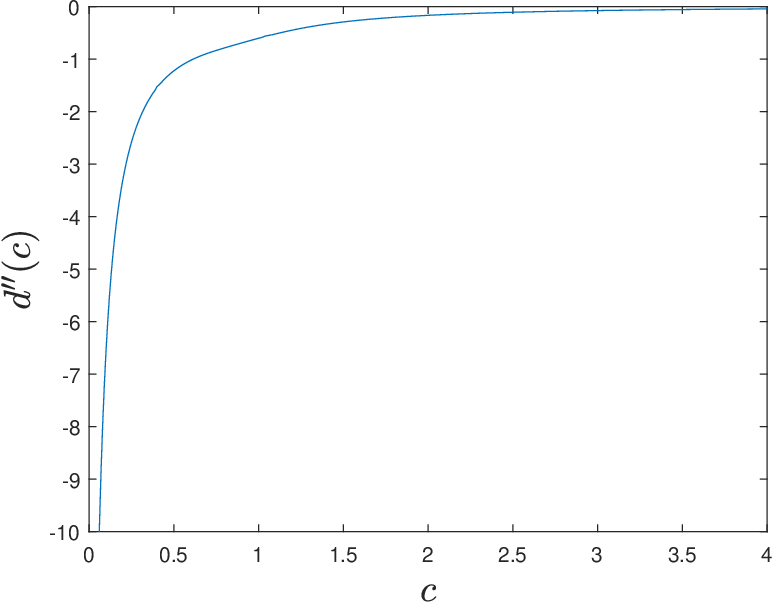}}
			\scalebox{.5}{\includegraphics{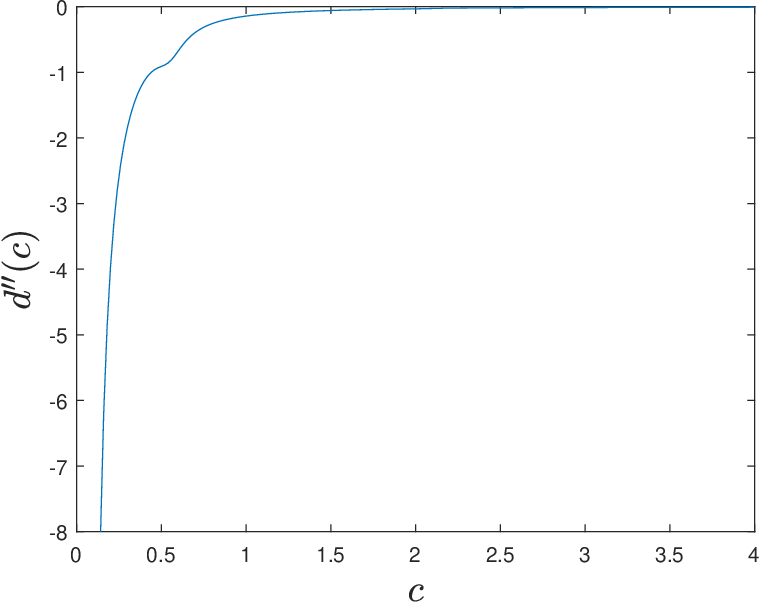}}
		\end{center}
		\caption{Plots of $d''(c)$ for $\alpha=1$ and $f(u)=|u|^{p_1-1}u-|u|^{p_2-1}u$ with $(p_1,p_2)=(3,2)$ (left) and $(p_1,p_2)=(4,3)$ (right). }\label{F:alpha=1_diff_unstable}
	\end{figure}

	\begin{figure}[ht]
		\begin{center}
			\scalebox{.5}{\includegraphics{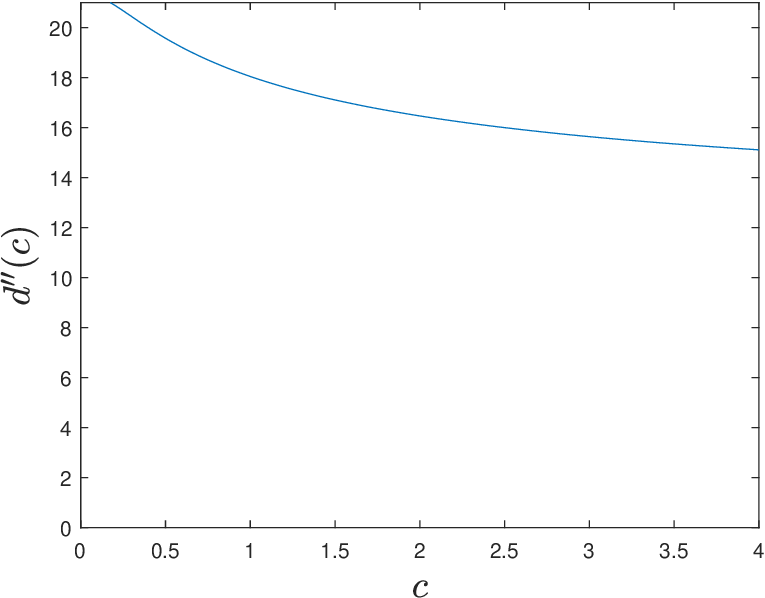}}
			\scalebox{.5}{\includegraphics{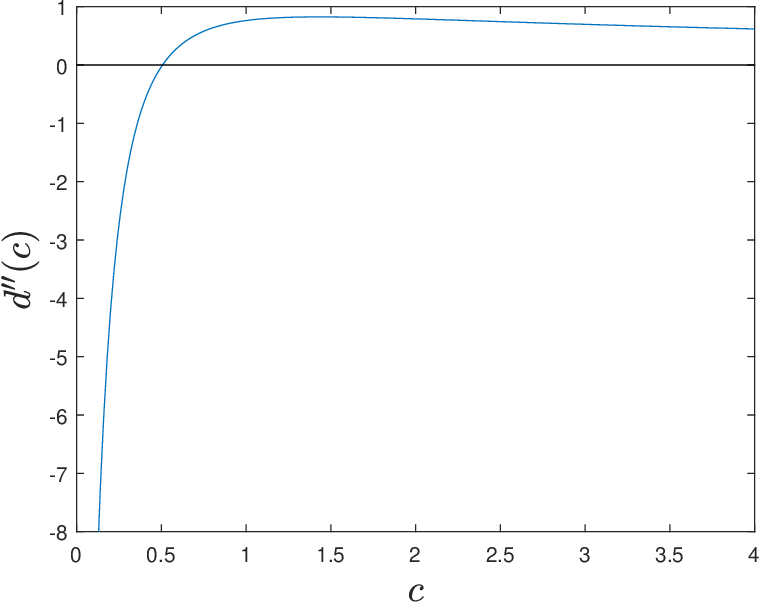}}
		\end{center}
		\caption{Plots of $d''(c)$ for $\alpha=2$ and $f(u)=|u|^{p_1-1}u-|u|^{p_2-1}u$ with $(p_1,p_2)=(2.5,2)$ and $(p_1,p_2)=(2,1.5)$. }
	\end{figure}\label{F:alpha=2_diff_stable_mixed}
	
	\begin{figure}[ht]
		\begin{center}
			\scalebox{.5}{\includegraphics{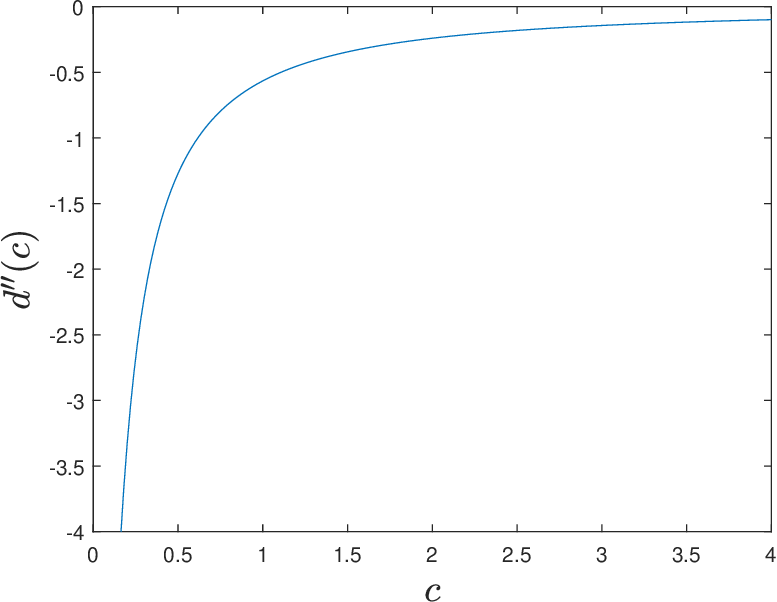}}
			\scalebox{.5}{\includegraphics{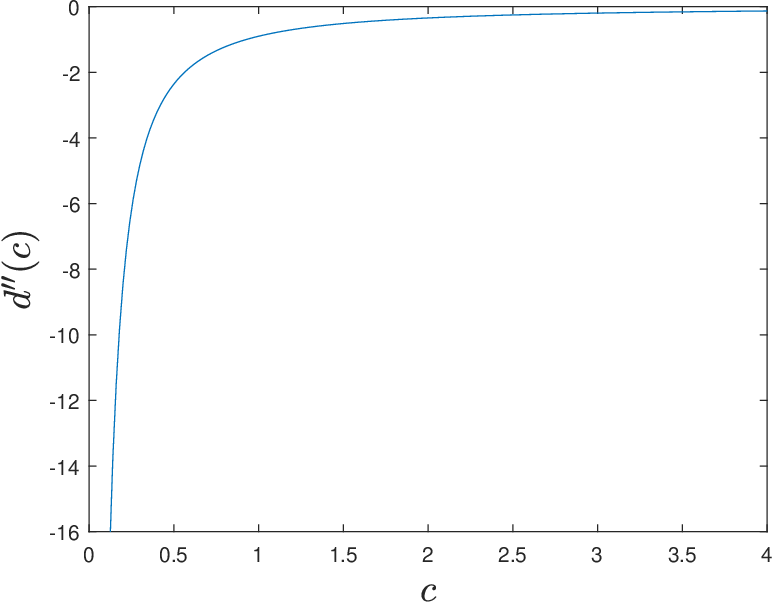}}
		\end{center}
		\caption{Plots of $d''(c)$ for $\alpha=2$ and $f(u)=|u|^{p_1-1}u-|u|^{p_2-1}u$ with $(p_1,p_2)=(4,2)$ and $(p_1,p_2)=(4,3)$. }
	\end{figure}\label{F:alpha=2_diff_unstable}
	
	\section{Blow-up and Uniform Boundedness}\label{sect-bound-blow}
	
	In this section,  we derive a
	sharp threshold between blow-up and global existence (uniformly boundedness in the energy space).
	First, we state the following local
	well-posedness for the initial-value problem associated with \eqref{main-fifthkp}. This local existence can be obtained along the lines of arguments of \cite{saut}. See also \cite{tom}. 
	
	\begin{theorem}\label{cauchy-problem}
		Let  $u_0\in X^s$, $s\geq\al+1$. Then there exists $T>0$ such that \eqref{main-fifthkp} has a unique solution $u(t)$ with $u(0)=u_0$  satisfying
		\[
		u\in C([0,T);X^s)\cap C^1\left([0,T);H^{s-\al-1}(\rr^2)\right),
		\qquad \partial_x^{-1}u_y\in C\left([0,T);H^{s-1}(\rr^2)\right);
		\]
		and if $\partial_x^{-2}(u_0)_{yy}\in L^2(\rr^2)$, one has
		\[
		u_t\in L^\infty\left((0,T);X^0\right),\qquad \partial_x^{-1}u_{yt}\in L^\infty\left([0,T);H^{-1}(\rr^2)\right).
		\]
		Furthermore we have $M(u(t))=M(u_0)$ and $E(u(t))=E(u_0)$.
	\end{theorem}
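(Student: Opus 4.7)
The plan is to follow the classical parabolic regularization scheme of Saut \cite{saut} adapted to the fractional dispersion $D_x^{2\al}$ and the non-homogeneous nonlinearity $f(u)=\mo|u|^{\po-1}u+\moo|u|^{\poo-1}u$. First, for $\mu>0$ I would study the regularized Cauchy problem
\[
u_t-D_x^{2\al}u_x+(f(u))_x+\e\nd u_{yy}+\mu(I-\Delta)^{\al+1}u=0,\qquad u(0)=u_0,
\]
where the extra parabolic term generates an analytic smoothing semigroup. Writing the equation in Duhamel form and estimating $f(u)-f(v)$ by $\|u-v\|_{L^\infty}$ times a polynomial in $\|u\|_{L^\infty}+\|v\|_{L^\infty}$ (using the Sobolev embedding $X^s\hookrightarrow L^\infty(\rt)$ since $s\ge\al+1>1$), a standard contraction-mapping argument in $C([0,T_\mu];X^s)$ yields, for each $\mu$, a unique local solution $u^\mu$ whose life-span depends on $\mu$ and $\|u_0\|_{X^s}$.

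The core of the proof is to obtain $\mu$-independent a priori bounds in $X^s$ on a common interval $[0,T]$. I would apply $J^s=(1-\Delta)^{s/2}$ to the equation, pair with $J^su^\mu$ in $L^2(\rt)$, and exploit the antisymmetry in $x$ of both $D_x^{2\al}\partial_x$ and $\nd\partial_y^2$, which eliminates the dispersive and nonlocal contributions from the energy identity. The remaining nonlinear term is handled via the Kato–Ponce commutator estimate
\[
\bigl\|[J^s,f(u^\mu)]\partial_x u^\mu\bigr\|_{L^2}\lesssim \|\partial_xf(u^\mu)\|_{L^\infty}\|u^\mu\|_{H^s}+\|\partial_xu^\mu\|_{L^\infty}\|f(u^\mu)\|_{H^s},
\]
together with a Moser estimate $\|f(u^\mu)\|_{H^s}\le C(\|u^\mu\|_{L^\infty})\|u^\mu\|_{H^s}$ valid thanks to $f\in C^2$. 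An analogous estimate is required for the component $(\xi^{-1}\what{u^\mu})^\vee$ of the $X^s$-norm: differentiating the equation once in $x$ removes the $\xi^{-1}$ factor and reduces matters to an $H^s$ estimate with the same commutator structure. Grönwall's inequality then gives uniform control of $\|u^\mu\|_{X^s}$ on a time interval $[0,T]$ depending only on $\|u_0\|_{X^s}$, while the equation itself provides $\|u^\mu_t\|_{L^\infty(0,T;H^{s-\al-1})}\lesssim 1$ uniformly in $\mu$.

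Passing to the limit $\mu\to 0^+$ via weak-$*$ compactness in $L^\infty(0,T;X^s)$ together with the Aubin–Lions strong compactness in $L^2_{\rm loc}$ (using the uniform bound on $u^\mu_t$) produces a solution $u$ of \eqref{main-fifthkp} in the claimed class. Uniqueness follows from an $L^2$-energy estimate for the difference $w=u-v$ of two solutions, in which the nonlinear contribution is bounded by $C(\|u\|_{L^\infty}+\|v\|_{L^\infty})\|w\|_{L^2}^2$; continuity in time (rather than mere weak continuity) is recovered by the Bona–Smith approximation, mollifying $u_0$ in $X^s$ and using the already established continuous dependence at higher regularity. For the additional statements under $\nd^2(u_0)_{yy}\in L^2$, one reads $u_t\in X^0$ directly from the equation, since each of $D_x^{2\al}u_x$, $(f(u))_x$ and $\nd u_{yy}$ lies in $X^0$ under that hypothesis. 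Finally, at the regularized level one has exact identities $\frac{d}{dt}M(u^\mu)=-2\mu\|(I-\Delta)^{(\al+1)/2}u^\mu\|_{L^2}^2$ and an analogous dissipative identity for $E(u^\mu)$; passing to the limit (with $\mu\|(I-\Delta)^{(\al+1)/2}u^\mu\|_{L^2}^2\to 0$) gives $M(u(t))=M(u_0)$ and $E(u(t))=E(u_0)$.

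The main obstacle, and the reason why the proof is not merely a transcription of Saut's argument for the classical KP equation, is the interaction between the non-integer dispersion $D_x^{2\al}$ and the low regularity of $|u|^{p_j-1}u$ for non-integer $p_j$: the Moser and commutator estimates must be applied with exponents adjusted to $\al$ so that no derivative is lost at the regularity threshold $s=\al+1$, and the fractional cancellation $\langle D_x^{2\al}\partial_x J^su,J^su\rangle=0$ must be justified in the distributional sense within $X^s$. Once this is in place, the scheme in \cite{saut,tom} carries through verbatim.
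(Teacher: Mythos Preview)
The paper does not give a proof of this theorem at all: it merely states that ``this local existence can be obtained along the lines of arguments of \cite{saut}. See also \cite{tom}.'' Your proposal is precisely a fleshed-out version of that reference---parabolic regularization, $\mu$-uniform $X^s$ bounds via Kato--Ponce/Moser commutator estimates exploiting the antisymmetry of the dispersive and nonlocal terms, Aubin--Lions compactness to pass to the limit, Bona--Smith for strong continuity, and the dissipative identities for the conserved quantities---so you are following exactly the route the paper points to, only with more detail than the paper itself supplies. Your closing paragraph correctly flags the two places where the argument is not a verbatim copy of \cite{saut}: the fractional Leibniz/commutator estimates must accommodate the non-smooth nonlinearity $|u|^{p_j-1}u$ and the symbol $|\xi|^{2\al}\xi$, and the $X^s$-norm component $(\xi^{-1}\widehat{u})^\vee$ requires a separate estimate; these are genuine but routine adaptations already implicit in the references \cite{saut,tom}.
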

	
	Next, we obtain the conditions under which the local solutions are uniformly bounded in the energy space. We start by showing that the best constant $\rho_p$ in \eqref{bin} may be expressed in terms of the ground state solutions of \eqref{gkp} with homogeneous nonlinearity,
	\begin{equation}\label{zeroalpha}
		D^{2\al}_x\ff+\partial_x^{-2}\ff_{yy}+\ff=|\ff|^{p-1}\ff.
	\end{equation}

	\begin{theorem}\label{sharp-constant}
		Let $p>1$. Then the optimal constant $\rho_p$  in \eqref{bin} is such that
		\begin{equation}\label{best-c}
			\rho_p^{-1}=
			\al^{-1}\left(\frac{p-1}{p+1}\right)k_p^{c'_p}\left(\frac{\al}{2}\right)^{\frac{p-1}{4}}\|\ff\|_\lt^{p-1}
			=\al^{-1}\left(\frac{p-1}{p+1}\right)k_p^{c_p}\left(\frac{2}{\al}\right)^{\frac{p-1}{4}} 
			m^{\frac{p-1}{2}},
		\end{equation}
		where  $c_p'=c_p-\frac{p-1}{2}$,
		\[
		k_p=\frac{3\al+2+p(\al-2)}{2(p-1)}
		\] 
		and $\ff$ is a ground state of \eqref{zeroalpha}.
	\end{theorem}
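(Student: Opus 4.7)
The plan is to realize $\rho_p^{-1}$ as the infimum of the Weinstein-type quotient
\[
J(u)=\frac{\|u\|_\lt^{2c_p}\|\dx u\|_\lt^{\frac{p-1}{\al}}\|\nd u_y\|_\lt^{\frac{p-1}{2}}}{\|u\|_{L^{p+1}(\rt)}^{p+1}}
\]
over $\x\setminus\{0\}$, and then to evaluate it on a ground state $\ff$ of \eqref{zeroalpha}. A direct computation shows that $J$ is invariant under the three-parameter family of dilations $u(x,y)\mapsto\sigma u(\beta x,\gamma y)$, so the Euler--Lagrange equation of a minimizer carries three free coefficients which, after a suitable rescaling, can be matched to those of \eqref{zeroalpha}. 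Since Remark \ref{remark-0} supplies ground states and $J$ is scale invariant, $\rho_p^{-1}=J(\ff)$.

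The core computation is to express the relevant norms of $\ff$ in terms of one another. Write
\[
A=\|\dx\ff\|_\lt^2,\qquad B=\|\nd\ff_y\|_\lt^2,\qquad C=\|\ff\|_\lt^2,\qquad D=\|\ff\|_{L^{p+1}(\rt)}^{p+1}.
\]
Pairing \eqref{zeroalpha} with $\ff$ yields $A+B+C=D$, while two Pohozaev-type identities come from differentiating $S(\ff(\lam^b\cdot,\lam^c\cdot))$ at $\lam=1$ with $(b,c)=(1,0)$ and $(0,1)$. Solving the resulting $3\times 3$ linear system produces
\[
A=\frac{(p-1)D}{\al(p+1)},\qquad B=\frac{(p-1)D}{2(p+1)},\qquad C=\frac{2c_p D}{p+1}.
\]

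Substituting into $J(\ff)$, the exponents of $C$ collect to $\frac{p-1}{2}$, so the $C$-dependence becomes $\|\ff\|_\lt^{p-1}$; rewriting the remaining numerical factors in terms of $k_p=\frac{2\al c_p}{p-1}$ and $c_p'=c_p-\frac{p-1}{2}$ yields the first expression for $\rho_p^{-1}$. For the second form, combine $A+B+C=D$ with $S(\ff)=\frac{p-1}{2(p+1)}D$ to obtain $\|\ff\|_\lt^{p-1}=\bigl(\frac{4c_p m}{p-1}\bigr)^{(p-1)/2}$, then use $\frac{4c_p}{p-1}=\frac{2k_p}{\al}$ to absorb the resulting constants into the appropriate powers of $\frac{2}{\al}$ and $k_p$. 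The main obstacle is justifying that minimizers of $J$ are, up to rescaling, ground states of \eqref{zeroalpha}; once the Euler--Lagrange equation of $J$ is written down and the three scaling parameters are used to normalize its coefficients, this identification follows, and the rest is algebraic bookkeeping.
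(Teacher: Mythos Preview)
Your proposal is correct and follows precisely the Weinstein-functional approach that the paper defers to (the paper omits the proof entirely, citing \cite[Theorem 1.2]{zamp}, which carries out exactly this strategy). Your algebraic bookkeeping checks: the scale invariance of $J$, the Pohozaev relations $A=\frac{(p-1)D}{\al(p+1)}$, $B=\frac{(p-1)D}{2(p+1)}$, $C=\frac{2c_pD}{p+1}$ (which coincide with Lemma \ref{poho-homo}), and the identification $\|\ff\|_\lt^{p-1}=\bigl(\tfrac{2k_p}{\al}m\bigr)^{(p-1)/2}$ all combine to give both forms of \eqref{best-c}; the exponent of $k_p$ in the first expression indeed equals $c_p'$ after simplification.
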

	\begin{proof}
		The proof is based on the ideas of \cite[Theorem 1.2]{zamp}, so we omit the details.
	\end{proof}
	
	We next obtain some Pohozaev-type identities related to the solutions of \eqref{zeroalpha}.
	\begin{lemma}\label{poho-homo}
		Let $\ff$ be a ground state of \eqref{zeroalpha}, then
		\begin{eqnarray}
			2\|\nd \ff_y\|_\lt^2=\al \|\dx\ff\|_\lt^2=	\left(\frac{p-1}{p+1}\right)\|\ff\|_{L^{p+1}(\rr^2)}^{p+1},\\
			\|\dx \ff\|_\lt^2	=k_p^{-1} \|\ff\|_\lt^2.
		\end{eqnarray}
	\end{lemma} 
	\begin{proof}
		The proof follows from the ideas of \cite{dbs-1}. 
	\end{proof}
	
		\begin{theorem}
				Let $s\geq\al+1$, $\mu_1<0<\mu_2$ and $\po>\poo$. Let $u\in C([0,T);H^s(\rt))$ with $\nd u_y\in C([0,T);H^{s-1}(\rt))$ be the solution obtained in Theorem \ref{cauchy-problem}, corresponding to the initial data $u_0$.
		Then the solution $u$ satisfies
		\[
		\sup_{t\in[0,T)}\|u(t)\|_{\dot{X}_\al}^2\lesssim 
		E(u_0)+C\|u_0\|_\lt^2,
		\]
		where $C=C(\po,\poo,\mu_1,\mu_2)>0$.
	\end{theorem}
	\begin{proof}
		The proof follows easily from the conservation laws of  the energy and the momentum. Indeed, one can see that
		\[
		\begin{split}
			\frac12\|u(t)\|_{\dot{X}_\al}^2
			&= E(u(t))
			+\int_\rt F(u(t))\dz\\
			&
			\leq E(u_0)+\frac{\mu_2(\po-\poo)}{(\po-1)(\poo+1)}
			\paar{\frac{\mu_2(\poo-1)(\po+1)}{-\mu_1(\po-1)(\poo+1)}}
			^\frac{\poo-1}{\po-\poo}
			\norm{u(t)}_\lt^2
			\\
			&=
			E(u_0)+\frac{\mu_2(\po-\poo)}{(\po-1)(\poo+1)}
			\paar{\frac{\mu_2(\poo-1)(\po+1)}{-\mu_1(\po-1)(\poo+1)}}
			^\frac{\poo-1}{\po-\poo}\norm{u_0}_\lt^2.
		\end{split}
		\]
		Here, we have used the fact that $u^{-2}F(u)$ has a unique maximum point.
	\end{proof}

	\begin{theorem}\label{subcritical-th}
		Let $s\geq\al+1$, $\mo,\moo>0$ and $\po>\poo$. Let $u\in C([0,T);H^s(\rt))$ with $\nd u_y\in C([0,T);H^{s-1}(\rt))$ be the solution obtained in Theorem \ref{cauchy-problem}, corresponding to the initial data $u_0$.
		\begin{enumerate}[(i)]
			\item If $\po,\poo<s_c =1+\frac{4\al}{2+\al}$, then $u(t)$ is uniformly bounded in $\x$ 	for $t\in [0, T )$.
			\item If $p_1=s_c$  and 
			\begin{equation}\label{crit-in}
				1-\frac{2\mu_1\rho_{p_1}}{\po+1}\|u_0\|_\lt^{2c_{p_1}}>0,
			\end{equation}
			then $u(t)$ is uniformly bounded in $\x$  	for $t\in [0, T )$. Particularly,  $u(t)$ is uniformly bounded in $\x$ if
			\[
			\|u_0\|_\lt^{2c_{p_1}}<\frac{1}{ \mu_1}
			\frac{2}{2+3\al}  \left(\frac{\al}{2}\right)^{\frac{\al}{\al+2}}\|\ff_{s_c}\|_\lt^{\frac{4\al}{\al+2}},
			\]
			where $\ff_{s_c}$ is the  ground state of
			\begin{equation}\label{criti-gs}
				\left(u+D_x^{2\al} u-u^{s_c} \right)_{xx}  + u_{yy}=0. 
			\end{equation}
		\end{enumerate} 
	\end{theorem}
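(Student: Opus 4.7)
The plan is to exploit the conservation of mass $M(u(t))=\|u_0\|_{L^2}^2$ and energy $E(u(t))=E(u_0)$ provided by Theorem~\ref{cauchy-problem}, together with the anisotropic Sobolev inequality \eqref{bin}, to control the nonlinear part $K(u(t))$ by the quadratic form $Q(u):=\|\dx u\|_{L^2}^2+\|\nd u_y\|_{L^2}^2$ that appears in the energy. The threshold $s_c$ is precisely the exponent at which the powers of $\|\dx u\|_{L^2}$ and $\|\nd u_y\|_{L^2}$ in \eqref{bin} have total weight exactly $2$, since $(p-1)/\al+(p-1)/2=2$ solves to $p=1+4\al/(2+\al)=s_c$.

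For part (i), \eqref{bin} applied for $j=1,2$ yields
\[
\|u(t)\|_{L^{p_j+1}}^{p_j+1}\le \rho_{p_j}\|u_0\|_{L^2}^{2c_{p_j}}\|\dx u\|_{L^2}^{(p_j-1)/\al}\|\nd u_y\|_{L^2}^{(p_j-1)/2}.
\]
Since $p_j<s_c$, the exponents satisfy $(p_j-1)/\al+(p_j-1)/2<2$, so weighted Young's inequality gives, for any $\varepsilon>0$,
\[
\|u(t)\|_{L^{p_j+1}}^{p_j+1}\le \varepsilon\, Q(u(t))+C_{\varepsilon},
\]
with $C_{\varepsilon}$ depending only on $\|u_0\|_{L^2}$. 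Summing in $j$, inserting into the energy identity $\tfrac12 Q(u(t))=E(u_0)+K(u(t))$, and choosing $\varepsilon$ small enough to absorb the $\varepsilon Q$ contributions on the left, one obtains a uniform bound on $Q(u(t))$; combined with mass conservation this gives the required uniform bound in $\x$.

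For part (ii), the choice $p_1=s_c$ saturates the exponent sum to exactly $2$, so \eqref{bin} produces
\[
\|u(t)\|_{L^{p_1+1}}^{p_1+1}\le \rho_{p_1}\|u_0\|_{L^2}^{2c_{p_1}}\|\dx u\|_{L^2}^{\theta_1}\|\nd u_y\|_{L^2}^{\theta_2},
\]
with $\theta_1+\theta_2=2$; by weighted AM--GM this product is bounded by $Q(u(t))$. The $p_2$-term with $p_2<s_c$ is still absorbed as a small perturbation via part~(i). Feeding these into the energy identity gives
\[
\tfrac12 Q(u(t))\le E(u_0)+\tfrac{\mu_1\rho_{p_1}}{p_1+1}\|u_0\|_{L^2}^{2c_{p_1}}Q(u(t))+\varepsilon\, Q(u(t))+C_{\varepsilon},
\]
and hypothesis \eqref{crit-in} makes the coefficient of $Q(u(t))$ on the right strictly below $\tfrac12$ for $\varepsilon$ small, yielding the uniform bound. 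The explicit sufficient condition is then obtained by inserting the sharp constant of Theorem~\ref{sharp-constant} into \eqref{crit-in}: at $p=s_c$ one computes $c_{p_1}=2\al/(2+\al)$ and $c'_{p_1}=c_{p_1}-(p_1-1)/2=0$, so $k_{p_1}^{c'_{p_1}}=1$ and $\rho_{p_1}^{-1}$ collapses to a clean multiple of $(\al/2)^{\al/(\al+2)}\|\ff_{s_c}\|_{L^2}^{4\al/(\al+2)}$.

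The main obstacle is the critical $p_1=s_c$ term in part (ii): unlike the subcritical contributions, which can be absorbed by choosing $\varepsilon$ small, this term produces a coefficient of $Q(u(t))$ that is fixed by the initial data rather than by a free parameter. The argument succeeds only because mass conservation allows us to replace $\|u(t)\|_{L^2}^{2c_{p_1}}$ by $\|u_0\|_{L^2}^{2c_{p_1}}$, and the smallness hypothesis \eqref{crit-in} is tuned precisely to beat the factor $\tfrac12$ coming from the kinetic part of the energy.
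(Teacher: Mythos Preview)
Your proposal is correct and follows essentially the same approach as the paper. The paper also combines the conserved quantities $M$ and $E$ with the anisotropic Sobolev inequality \eqref{bin}, bounds the product $\|\dx u\|_{L^2}^{(p_j-1)/\al}\|\nd u_y\|_{L^2}^{(p_j-1)/2}$ by $\|u\|_{\xx}^{(\al+2)(p_j-1)/(2\al)}$, and then analyzes the resulting function $h(z)=\tfrac12 z^2 - \sum_j \tfrac{\mu_j\rho_{p_j}}{p_j+1}\|u_0\|_{L^2}^{2c_{p_j}} z^{(\al+2)(p_j-1)/(2\al)}$ of $z=\|u(t)\|_{\xx}$; your use of Young's inequality with parameter $\varepsilon$ in the subcritical case and weighted AM--GM in the critical case is a cosmetic rephrasing of the same estimate, and the identification of the sharp constant via \eqref{best-c} at $p=s_c$ (where $c'_{p_1}=0$) is exactly what the paper invokes at the end.
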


	\begin{proof}
		Let $u\in  C([0, T ); X^s)$ be the solution of \eqref{main-fifthkp} with the
		initial data $u(0)=u_0$. Then by using the invariants $E$ and $M$, we have  that
		\begin{equation}\label{energy-estimate-1}
			\begin{split}
				E(u_0)&\geq\frac{1}{2}\|u\|_{\xx}^2-\frac{\mo}{\po+1}\|u\|_{L^{\po+1}(\rt)}^{\po+1}-\frac{\moo}{\poo+1}\|u\|_{L^{\poo+1}(\rt)}^{\poo+1}\\
				&\geq \frac{1}{2}\|u\|_{\xx}^2-\frac{\mo \rho_{p_1}}{\po+1}\|u\|_\lt^{2c_{p_1}}\|\dx u\|_\lt^{(\po-1)/\al}\|\nd u_y\|_\lt^{(\po-1)/2}\\
				&\quad-\frac{\moo \rho_{p_2}}{\poo+1}\|u\|_\lt^{2c_{p_2}}\|\dx u\|_\lt^{(\poo-1)/\al}\|\nd u_y\|_\lt^{(\poo-1)/2}\\
				& \geq \frac{1}{2}\|u\|_{\xx}^2-
				\frac{\mo \rho_{p_1}}{\po+1}\|u\|_\lt^{2c_{p_1}}\|u\|_{\xx}^{\frac{(\al+2)(\po-1)}{2\al}}-\frac{\moo \rho_{p_2}}{\poo+1}\|u\|_\lt^{2c_{p_2}}\|u\|_{\xx}^{\frac{(\al+2)(\poo-1)}{2\al}}\\
				&=h(\|u\|_{\xx}),
			\end{split}
		\end{equation}
		where
		\[
		h(z)=\frac{1}{2}z^2-
		\frac{ \mo \rho_{p_1}}{\po+1}\|u_0\|_\lt^{2c_{p_1}}z^{\frac{(\al+2)(\po-1)}{2\al}}-\frac{ \moo \rho_{p_2}}{\poo+1}\|u_0\|_\lt^{2c_{p_2}}z^{\frac{(\al+2)(\poo-1)}{2\al}}
		\]
		and $\rho_{p_1},\rho_{p_2}>0$ are the same as in Theorem \ref{sharp-constant}.
		This   immediately implies for  $p_1,p_2<s_c$ that $\|u\|_\x$ is uniformly bounded for all $t\in[0, T )$. 
		The uniform bound still holds for $p_1=s_c>p_2$ provided	
		\[
		1-\frac{2\mu_1\rho_{p_1}}{\po+1}\|u_0\|_\lt^{2c_{p_1}}>0.
		\]
		On the other hand, \eqref{best-c} gives the condition of boundedness of $u$ in terms of the ground state of \eqref{zeroalpha}.
	\end{proof}

	For the supercritical nonlinearities or in the case of the combined supercritical and critical nonlinearities, we need to impose more additional conditions in terms of the best constant of \eqref{bin}.

	\begin{theorem}\label{supercritical-th}
		Let $\mo,\moo>0$, $\poo>\po\geq s_c$. Suppose that $u$ is the solution of \eqref{main-fifthkp} as in Theorem \ref{subcritical-th}, corresponding to the initial data $u_0$.
		\begin{enumerate}[(i)]
			\item If $\po=s_c$,   $\|u_0\|_\xx<z_0$, $E(u_0)<h(z_0)$ and \eqref{crit-in} holds, then $u(t)$ is uniformly bounded in $\x$ 	for $t\in [0, T )$, 
			where $h$ is as in the proof of Theorem \ref{subcritical-th},
			\[
			z_0=\left(\frac{A}{B}\right)^{\frac{s_c-1}{2p_2-s_c+1}},
			\]
			and
			\[
			A=1-\frac{2\rho_{p_1}\mo}{\po+1}\|u_0\|_\lt^{2c_{\po}},\qquad
			B=\frac{\rho_{p_2}\moo}{\poo+1}(\poo-1)\left(\frac{\al+2}{2\al}\right)\|u_0\|_\lt^{2c_{\poo}}.
			\]
			\item  If $\po>s_c$,
			there exists 
			\[
			z_1=z_1\left(\al,p_1,p_2,\rho_{p_1},\rho_{p_2},\mu_1,\mu_2,\|u_0\|_\lt\right)>0
			\]
			such that if $$E(u_0)< \frac{(p_1-1)(\al+2)-4\al}{2(p_1-1)(\al+2)}z_1^2,$$ and $\|u_0\|_\xx<z_1$, then the solution $u(t)$ is uniformly bounded in the energy space.
		\end{enumerate} 
	\end{theorem}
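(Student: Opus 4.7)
The plan is to extend the coercivity/bootstrap argument from the proof of Theorem \ref{subcritical-th} to the critical and supercritical regimes. Recall that from the energy and mass conservation combined with the sharp Besov--Il'in--Nikolskii inequality \eqref{bin} we have, for any solution $u\in C([0,T);X^s)$ with initial data $u_0$,
\[
E(u_0)\;\geq\; h(\|u(t)\|_\xx),\qquad t\in[0,T),
\]
where $h$ is the function from \eqref{energy-estimate-1}. In the subcritical regime every nonlinear term in $h$ had exponent strictly less than $2$, so coercivity was automatic; now the exponents $(\al+2)(\po-1)/(2\al)$ and $(\al+2)(\poo-1)/(2\al)$ are $\geq 2$ and $h$ is no longer globally coercive, but it is a ``mountain'' function whose peak height must dominate $E(u_0)$.

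For case (i), the identity $\po = s_c = 1+4\al/(\al+2)$ makes $(\al+2)(\po-1)/(2\al)=2$, so that
\[
h(z)=\frac{A}{2}z^{2}-\frac{\moo\rho_{\poo}}{\poo+1}\|u_0\|_\lt^{2c_{\poo}}\,z^{(\al+2)(\poo-1)/(2\al)}.
\]
Condition \eqref{crit-in} is exactly $A>0$, and since $\poo>s_c$ the exponent of the second term is strictly greater than $2$. Hence $h(0)=0$, $h\to-\infty$ at $\infty$, and $h$ has a unique positive local maximum located at the critical point $z_{0}$ of $h'=0$; the formula $z_{0}=(A/B)^{(s_c-1)/(2\poo-s_c+1)}$ is exactly this solution after rearrangement with $A,B$ as in the statement. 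For case (ii), both exponents in $h$ are strictly greater than $2$, so again $h(0)=0$, $h$ is positive on some right-neighbourhood of $0$, $h\to-\infty$ at infinity, and it attains a positive maximum at some $z_{1}=z_{1}(\al,\po,\poo,\rho_{\po},\rho_{\poo},\mo,\moo,\|u_0\|_\lt)>0$. A direct computation of $h(z_{1})$ via $h'(z_{1})=0$ (or, equivalently, discarding the nonnegative $\poo$-contribution since $\moo>0$ and reducing to the toy problem $z^{2}/2-Cz^{q}$ with $q=(\al+2)(\po-1)/(2\al)$, for which $\max h = \frac{q-2}{2q}z_{*}^{2}$) produces precisely the constant $\frac{(\po-1)(\al+2)-4\al}{2(\po-1)(\al+2)}z_{1}^{2}$ appearing in the hypothesis.

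The conclusion then follows by a continuity-bootstrap argument. By Theorem \ref{cauchy-problem} the map $t\mapsto\|u(t)\|_\xx$ is continuous on $[0,T)$. Set $z_{\ast}=z_{0}$ in (i) and $z_{\ast}=z_{1}$ in (ii). The hypotheses give $\|u_0\|_\xx<z_{\ast}$ and $E(u_0)<h(z_{\ast})$. If there existed a first $t^{\ast}\in(0,T)$ with $\|u(t^{\ast})\|_\xx=z_{\ast}$, applying the bound $E(u_0)\geq h(\|u(t^{\ast})\|_\xx)=h(z_{\ast})$ would contradict $E(u_0)<h(z_{\ast})$. Therefore $\|u(t)\|_\xx<z_{\ast}$ on $[0,T)$; combined with the mass conservation $\|u(t)\|_\lt=\|u_0\|_\lt$, this gives uniform boundedness of $u(t)$ in $\x$.

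The main obstacle will be the sharp identification of the threshold $h(z_{\ast})$ in case (ii). Because both nonlinear terms are genuinely supercritical, the critical point of $h'=0$ is the root of a transcendental equation and no closed-form $z_{1}$ is available; the cleanest route is to observe that the $\poo$-term is nonnegative and subtractive, so dropping it enlarges $h$, reducing the problem to the one-parameter case whose peak value $\tfrac{q-2}{2q}z_*^2$ yields the stated coefficient $\frac{(\po-1)(\al+2)-4\al}{2(\po-1)(\al+2)}$. Verifying that this genuinely provides a sufficient (though not necessarily sharp) lower barrier for the true $h$ requires the monotonicity in the $\poo$-coefficient to be used carefully; once this is done, the bootstrap closes exactly as in case (i).
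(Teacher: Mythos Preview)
Your proposal is correct and follows essentially the same route as the paper: the energy--mass bound $E(u_0)\geq h(\|u(t)\|_\xx)$ from \eqref{energy-estimate-1}, analysis of the mountain shape of $h$, and the continuity bootstrap. Part (i) matches exactly.

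One clarification for part (ii): your parenthetical ``toy problem'' route, where you drop the $\poo$-term from $h$ itself and work with $\tilde h(z)=\tfrac12 z^2-Cz^{e_1}$, does not quite do the job as stated. Dropping that term enlarges $h$, so $E(u_0)\geq h(\|u(t)\|_\xx)$ does \emph{not} upgrade to $E(u_0)\geq \tilde h(\|u(t)\|_\xx)$, and the peak of $\tilde h$ sits at some $z_*\neq z_1$; the theorem's threshold is phrased in terms of $z_1$, the critical point of the full $h$. What the paper does---and what your first-mentioned route ``direct computation of $h(z_1)$ via $h'(z_1)=0$'' amounts to---is to keep the full $h$, show $\tilde h(z):=h'(z)/z$ is strictly decreasing on $(0,\infty)$ with $\tilde h(0^+)=1$ (hence a unique critical point $z_1$), and then substitute $B_1 z_1^{e_1}=z_1^2-B_2 z_1^{e_2}$ into $h(z_1)$ to obtain
\[
h(z_1)=\Bigl(\tfrac12-\tfrac1{e_1}\Bigr)z_1^{2}+B_2\Bigl(\tfrac1{e_1}-\tfrac1{e_2}\Bigr)z_1^{e_2}\ \geq\ \frac{e_1-2}{2e_1}\,z_1^{2},
\]
the last inequality because $e_2>e_1$ and $B_2>0$. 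So the ``discarding'' happens \emph{after} using $h'(z_1)=0$, not by replacing $h$ with the toy function. With this adjustment your bootstrap closes exactly as you describe.
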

	
	\begin{proof}
		(i) 	Let $h$ be as the same in the proof of  \ref{subcritical-th}. Then,
		\[
		E(u(t))\geq  h(\|u(t)\|_\xx).
		\]
		The function $h$ is continuous on $[0,\infty)$ and
		\[
		h'(z)=Az-Bz^{(p_2-1)(\frac{\al+2}{2\al})-1}.
		\]
		Inequality \eqref{crit-in} shows that $h'(z)=0$ has only a positive root $z_0$. 	
		Hence, $h$ is increasing on the interval $[0,z_0)$, decreasing on $[z_0,+\infty)$ and
		\[
		h_{\max}=h(z_0)=A\frac{(p_2-1)(\al+2)-4\al}{2(p_2-1)(\al+2)}z_0^2.
		\]
		The invariant $E(u(t))=E(u_0)$ and $E(u_0)<h(z_0)$ imply that
		\begin{equation}\label{super-eq-1}
			h(\|u(t)\|_\xx)\leq E(u(t))=E(u_0)<h(z_0)
		\end{equation}
		for all $[0,T)$. We show that $\|u(t)\|_\xx<z_0$ for all $[0,T)$, if  $\|u_0\|_\xx<z_0$. This means that $u(t)$ is uniformly bounded in the energy space. Suppose by contradiction that $\|u(t)\|_\xx\geq z_0$ for some $t_1<T$. Then, by continuity of $\|u(t)\|_\xx$, there exists $t_0\in(0,T)$ such that $\|u(t_0)\|_\xx= z_0$. Thus,
		\[
		h(\|u(t_0)\|_\xx)=h(z_0)=h_{\max}.
		\]
		Now, \eqref{super-eq-1} yields with $t=t_0$ that
		\[
		h(\|u(t_0)\|_\xx)=h(z_0)=h_{\max}\leq E(u(t_0))=E(u_0)<h_{\max}.
		\]
		This contradiction gives the desired result. We note by a similar argument that if $\|u(t)\|_\xx>z_0$ for all $[0,T)$, if $\|u_0\|_\xx>z_0$. 
		
		(ii) By an argument similar to (i), we see that 
		\[
		h'(z)=z-B_1z^{e_1-1}-B_2z^{e_2-1},
		\]
		where
		\[
		e_j=(p_j-1)\left( \frac{\al+2}{2\al} \right)
		\]
		and
		\[
		B_j=\frac{\rho_{p_j}\mu_j}{p_j+1}e_j\|u_0\|_\lt^{2c_{p_j}},\quad j=1,2.
		\]
		Set $\tilde{h}(z)=\frac{h'(z)}{z}$. Then,
		\[
		\tilde{h}'(z)=-B_1\left(e_1-1\right)z^{e_1-2}
		-B_2\left(e_2-1\right)z^{e_2-2}.
		\]
		The assumption $p_2>p_1>s_c$ shows for $z>0$ that $\tilde{h}'(z)<0$. So, $\tilde{h}$ is decreasing on $[0,+\infty)$. By the fact $\tilde{h}(0)=1$, there exists $z_1>0$ such that $\tilde{h}(z_1)=0$, and then
		\[
		h(z_1)=\left(\frac{1}{2}-\frac{1}{e_1}\right)z_1^2+\frac{\rho_{p_2}\mu_2}{e_1(p_2+1)}\left( \frac{\al+2}{2\al} \right)(p_2-p_1)z_1^{e_2}.
		\]
		We now obtain from the conservation of energy  together with $E(u_0)<\frac{e_1-2}{2e_1}z_1^2$ that
		\[
		h 	(\|u(t)\|_\xx)\leq E(u(t))=E(u_0)
		\leq
		\left(\frac{1}{2}-\frac{1}{e_1}\right)z_1^2+\frac{\rho_{p_2}\mu_2}{e_1(p_2+1)}\left( \frac{\al+2}{2\al} \right)(p_2-p_1)z_1^{e_2}
		=h(z_1).
		\]
		By the same argument as in (i), we conclude that $\|u(t)\|_\xx<z_1$ for all $t\in[0,T)$, if $\|u_0\|_\xx<z_1$. This gives the uniform boundedness of $u$.
	\end{proof}

	
	\begin{proposition}\label{vir-cd}
		Let $\phi\in C^1(\rr)$ be a nonegative measurable function  satisfying $|\phi'(y)|\lesssim \phi(y) $ and  $u$ be the solution of Theorem \ref{cauchy-problem}. Then	 $\phi^{1/2}(y)u\in L^\infty((0,T); L^2(\rr^2))$, if $\phi^{1/2}(y)|u_0|\in L^2(\rr^2)$. 
	\end{proposition}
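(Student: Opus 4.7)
The plan is a weighted energy estimate: multiply the equation by $\phi(y)u$, integrate over $\mathbb{R}^2$, identify which terms vanish, and close the resulting identity by Gronwall. Granting the regularity from Theorem \ref{cauchy-problem} (and, if needed, approximating $u_0$ by Schwartz data), I would compute
\[
\tfrac{d}{dt}\int_{\mathbb{R}^2}\phi(y)u^2\,dx dy=2\int \phi(y)u\,u_t\,dx dy
\]
and substitute $u_t=D_x^{2\al}u_x-(f(u))_x-\partial_x^{-1}u_{yy}$ from \eqref{main-fifthkp}. Because $\phi$ is $x$-independent, $D_x^{2\al}$ commutes with $\partial_x$, and $D_x^{\al}$ is self-adjoint, one integration by parts in $x$ turns the dispersive contribution into $\tfrac12\int \phi\,\partial_x((D_x^\al u)^2)\,dx dy=0$; the nonlinear contribution is $-\int\phi(F_1\mu_1+F_2\mu_2)_x\,dx dy=0$ for the same reason.

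For the last piece, one integration by parts in $y$ gives
\[
-2\int\phi u\,\partial_x^{-1}u_{yy}\,dx dy=2\int \phi'(y)\,u\,\partial_x^{-1}u_y\,dx dy+2\int\phi\,u_y\,\partial_x^{-1}u_y\,dx dy,
\]
and the last integral vanishes since $u_y=\partial_x(\partial_x^{-1}u_y)$ implies it equals $\tfrac12\int\phi\,\partial_x((\partial_x^{-1}u_y)^2)\,dx dy=0$. This produces the key identity
\[
\tfrac{d}{dt}\int \phi(y)u^2\,dx dy=2\int \phi'(y)\,u\,\partial_x^{-1}u_y\,dx dy.
\]
Using $|\phi'|\lesssim \phi$ and Cauchy-Schwarz,
\[
\Bigl|\tfrac{d}{dt}\|\phi^{1/2}u\|_{L^2}^2\Bigr|\le C\|\phi^{1/2}u\|_{L^2}\,\|\phi^{1/2}\partial_x^{-1}u_y\|_{L^2}.
\]

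To turn this into a Gronwall bound I would truncate the weight by $\phi_n(y)=\min(\phi(y),n)$, which is nonnegative, Lipschitz, and still satisfies $|\phi_n'|\le C\phi_n$ with the same constant. For each $n$ the boundedness $\phi_n\le n$, combined with the a priori estimate $\partial_x^{-1}u_y\in C([0,T);H^{s-1})\hookrightarrow L^2$ from Theorem \ref{cauchy-problem}, controls $\|\phi_n^{1/2}\partial_x^{-1}u_y\|_{L^2}$, and the differential inequality integrates to a uniform-in-time bound on $\|\phi_n^{1/2}u(t)\|_{L^2}$ for $t\in[0,T)$.

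The main obstacle is that the straightforward $n$-dependent bound degenerates as $n\to\infty$, so the approximation scheme has to be set up so that the constants are uniform in $n$. The cleanest route I see is to couple the identity above with the analogous computation for $\tfrac{d}{dt}\int\phi_n(\partial_x^{-1}u_y)^2\,dx dy$: differentiating $w=\partial_x^{-1}u_y$ in time via the equation produces two nontrivial contributions, one involving $\phi_n'\,w\,\partial_x^{-1}w_y$ (handled exactly as above by $|\phi_n'|\le C\phi_n$) and one involving $\tfrac12\int\phi_n f''(u)u_xw^2\,dx dy$, bounded by $\|u_x\|_{L^\infty}\int\phi_n w^2$ thanks to the $X^s$ regularity ($s\ge\alpha+1$ gives $u_x\in L^\infty$ by Sobolev embedding). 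This yields a $2\times 2$ system of differential inequalities for $(\int\phi_nu^2,\int\phi_n w^2)$ with coefficients independent of $n$, and a Gronwall argument closes the loop. Letting $n\to\infty$ by monotone convergence then delivers the claimed bound $\phi^{1/2}u\in L^\infty((0,T);L^2(\mathbb{R}^2))$, with the size of the norm depending continuously on $T$, $\|\phi^{1/2}u_0\|_{L^2}$, and the $X^s$-norm of the solution.
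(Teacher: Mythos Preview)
Your overall strategy—weighted $L^2$ identity, truncation of the weight, and Gronwall—is exactly the route the paper has in mind (the paper gives no argument of its own and simply points to Theorem~3.3 of Saut \cite{saut}, which proceeds by this method). Your derivation of the key identity
\[
\frac{d}{dt}\int\phi\,u^2\,dxdy = 2\int\phi'(y)\,u\,\partial_x^{-1}u_y\,dxdy
\]
is correct, and you correctly isolate the obstruction: under $|\phi'|\lesssim\phi$, the right-hand side is bounded by $C\|\phi^{1/2}u\|_{L^2}\|\phi^{1/2}w\|_{L^2}$ with $w=\partial_x^{-1}u_y$, so one needs control of $\int\phi\,w^2$.

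The gap is in your proposed closure. The transverse contribution to $\tfrac{d}{dt}\int\phi_n w^2$ is, as you say, $2\int\phi_n'\,w\,\partial_x^{-1}w_y$; but $\partial_x^{-1}w_y=\partial_x^{-2}u_{yy}$, so after applying $|\phi_n'|\le C\phi_n$ and Cauchy--Schwarz you are left with $\|\phi_n^{1/2}\partial_x^{-2}u_{yy}\|_{L^2}$, which is neither one of your two unknowns nor controlled by the $X^s$ regularity of Theorem~\ref{cauchy-problem}. Saying it is ``handled exactly as above'' does not close the loop: adding this quantity to the system just reproduces the same difficulty one order higher (with $\partial_x^{-3}u_{yyy}$), and the cascade does not terminate. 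A secondary issue is that your bound on the nonlinear piece requires $f''(u)u_x\in L^\infty$, which for $\alpha\le 1$ or for exponents $p_j<2$ is not delivered by $s\ge\alpha+1$ alone.

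In the standard treatments the closure comes from a slightly different hypothesis on the weight: if one has $|\phi'|\lesssim\phi^{1/2}$ (equivalently $|\phi'|^2\lesssim\phi$, which is exactly what $\phi(y)=y^2$—the weight actually used in the virial argument—satisfies), then
\[
\Bigl|2\int\phi' u w\Bigr|\le 2\|\phi^{1/2}u\|_{L^2}\,\|\phi^{-1/2}\phi'\,w\|_{L^2}\lesssim \|\phi^{1/2}u\|_{L^2}\,\|\partial_x^{-1}u_y\|_{L^2},
\]
and the single inequality for $\int\phi u^2$ closes directly against the unweighted energy bound on $\partial_x^{-1}u_y$. Under the hypothesis $|\phi'|\lesssim\phi$ as literally stated, your $2\times2$ Gronwall scheme does not go through without adding further weighted assumptions on the data (e.g.\ $\phi^{1/2}\partial_x^{-1}(u_0)_y\in L^2$ and beyond).
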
		 
	
	The proof of this proposition is similar to one of Theorem 3.3 in \cite{saut}.
	
	Now by Proposition \ref{vir-cd}, the quantity 
	\[
	\nii(u)=\int_\rt \phi(y)u^2(t)\dz
	\]
	is well defined as soon as $u_0\in X^s$ and $\phi^{1/2}(y)u_0\in L^2(\rt)$.
	
	\begin{theorem}\label{virial-theorem}
		Let  $\phi\in C^4(\rr)$ as in Proposition \ref{vir-cd}.  Suppose that $u$  is a solution, obtained from Theorem \ref{cauchy-problem}, such that $\phi^{1/2}(y)u_0\in\lt$.
		Then $u(t)$ satisfies
		\begin{equation}\label{virial-id}
			\begin{split}
				\frac{1}{2}	\frac{\dd^2}{\dd t^2}\nii(u)&=-\frac{1}{2}\int_\rt\phi^{(4)} (\nd u)^2\dz\\
				&\quad+  2\int_\rt\phi''(y)\left( (\nd u_y)^2- \ps_1\mu_1F_1(u)-\ps_2\mu_2F_2(u)\right)\dz,
			\end{split}
		\end{equation}
	\end{theorem}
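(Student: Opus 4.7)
The plan is a direct differentiation of
\[
\nii(u(t))=\int_{\rr^2}\phi(y)u^2(x,y,t)\dz
\]
twice in $t$, using the equation to eliminate time derivatives and then integration by parts in $x$ and $y$ to reorganize each term. Throughout I write $v=\nd u$, so that $u=v_x$ and $\nd u_{yy}=v_{yy}$, which makes all the nonlocal quantities local in the new variable and reveals the structure of the identity.

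First, I would differentiate once. Using $u_t=\dx^{2}u_x-(f(u))_x-\nd u_{yy}$ (rewritten from \eqref{main-fifthkp}) and the fact that $\phi=\phi(y)$ depends only on $y$, the two pure $x$-derivative terms integrate away: the contribution $\int\phi\,u\,\dx^{2}u_x$ vanishes by the self-adjointness of $\dx^{2}$ together with the identity $\int\partial_x[(\dx u)^2]\,\dd x=0$, and the contribution $\int\phi\,u\,(f(u))_x$ vanishes because $u_xf(u)=\partial_xF(u)$. What survives is $-2\int\phi\,v_x v_{yy}\dz$, and integration by parts in $y$ (together with $\int\partial_x(v_y^2)\,\dd x=0$) rewrites this as $2\int\phi'(y)v_xv_y\dz=2\int\phi'(y)u(\nd u_y)\dz$. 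This gives the key intermediate formula $\frac{\dd}{\dd t}\nii(u)=2\int\phi'u\,\nd u_y\dz$.

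For the second differentiation I would apply $\partial_t$ to this expression, use $\partial_x^{-1}u_{yt}=\partial_y v_t$ together with $v_t=\dx^2v_x-f(v_x)-\nd v_{yy}$, and integrate by parts in $y$ to reduce everything to the symmetric form
\[
\frac{\dd^2}{\dd t^2}\nii(u)=-4\int\phi'(y)v_{xy}v_t\dz-2\int\phi''(y)v_xv_t\dz.
\]
Then I substitute $v_t$ and handle each piece separately. The two $\dx^{2}$-terms cancel: IBP in $x$ and self-adjointness of $\dx$ convert $-4\int\phi'v_{xy}\dx^{2}v_x$ into $+2\int\phi''(\dx u)^2\dz$, exactly cancelling $-2\int\phi''v_x\dx^{2}v_x=-2\int\phi''(\dx u)^2\dz$. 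The two nonlinear terms combine via $u_yf(u)=\partial_yF(u)$ (so an extra $\phi''$ appears after IBP in $y$) and the pointwise identity $uf_j(u)=(p_j+1)F_j(u)$, producing a contribution proportional to $\int\phi''\bigl[\mu_1(p_1-1)F_1(u)+\mu_2(p_2-1)F_2(u)\bigr]\dz$, which matches the advertised $-2\int\phi''(\ps_1F_1+\ps_2F_2)\dz$ once $\ps_j$ is identified with the appropriate multiple of $\mu_j(p_j-1)$. Finally, the two $\nd v_{yy}$-terms: IBP in $x$ converts $4\int\phi'v_{xy}\,\nd v_{yy}$ into $-4\int\phi'v_yv_{yy}=2\int\phi''v_y^2\dz$, and $2\int\phi''v_x\,\nd v_{yy}=-2\int\phi''v\,v_{yy}$ after IBP in $x$; a further double IBP in $y$ (using $2vv_y=(v^2)_y$) gives $-\int\phi^{(4)}v^2+2\int\phi''v_y^2\dz$. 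Adding these produces exactly $-\int\phi^{(4)}(\nd u)^2\dz+4\int\phi''(\nd u_y)^2\dz$, and dividing by $2$ yields \eqref{virial-id}.

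The main obstacle is rigor rather than algebra: at the regularity provided by Theorem \ref{cauchy-problem} the computation is only formal, because the repeated integrations by parts in $y$ against the unbounded weight $\phi$, together with the nonlocal operators $\dx^{2}$ and $\nd$, are not immediately justified for $H^s$-valued solutions. I would handle this by a standard approximation: mollify the initial datum to obtain smooth, sufficiently decaying solutions $u_n$ for which the identity is a routine calculus exercise, use Proposition \ref{vir-cd} to control $\phi^{1/2}u_n$ uniformly in $n$, and then pass to the limit in each term using the continuous dependence and the growth hypothesis $|\phi'|\lesssim\phi$ (which propagates to $|\phi^{(k)}|\lesssim\phi$ for the relevant $k$ by the same proposition applied to derivatives of $\phi$, or by an a priori assumption on $\phi\in C^4$). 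A secondary cosmetic step is checking that the boundary terms at $|x|,|y|\to\infty$ really vanish in the IBPs; for the $x$-integrations this is immediate from $X^s$-membership, while for the $y$-integrations the weight $\phi^{1/2}u_0\in\lt$ and the bound on $\nd u_y$ in Proposition \ref{vir-cd} provide the needed decay.
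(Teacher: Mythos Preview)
The paper does not actually supply a proof of this theorem; it is simply stated (the line following the display just records $\ps_j=(p_j-1)/2$). Your direct-differentiation approach --- write $v=\nd u$, compute $\frac{\dd}{\dd t}\nii(u)=2\int\phi'v_xv_y$, differentiate again to get $-4\int\phi'v_{xy}v_t-2\int\phi''v_xv_t$, then substitute $v_t$ and integrate by parts --- is the standard one (it is exactly the computation in the reference \cite{saut} that the authors cite for the companion Proposition~\ref{vir-cd}), and your algebra is correct: the dispersive pieces cancel, the transverse pieces give $-\int\phi^{(4)}(\nd u)^2+4\int\phi''(\nd u_y)^2$, and the nonlinear pieces give $+4\int\phi''\bigl[\mu_1\ps_1F_1+\mu_2\ps_2F_2\bigr]$.

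Your hedge ``once $\ps_j$ is identified with the appropriate multiple of $\mu_j(p_j-1)$'' is in fact flagging a genuine sign/coefficient slip in the paper's displayed formula: the paper writes $-\ps_jF_j$ and omits the $\mu_j$, whereas the computation produces $+\mu_j\ps_jF_j$. This is a typo in the statement, not an error in your derivation; the same inconsistency resurfaces in the first line of the proof of Theorem~\ref{blow-up-theorem-1}, where $\|u\|_{\lt}^2$ appears in place of $\|\nd u_y\|_{\lt}^2$. Your regularization plan (mollify, use Proposition~\ref{vir-cd} for uniform weighted control, pass to the limit) is the standard justification. One small caveat: the claim that $|\phi'|\lesssim\phi$ ``propagates'' to $|\phi^{(k)}|\lesssim\phi$ is not automatic and already fails near $y=0$ for the main example $\phi(y)=y^2$; but since the only issue in the limiting argument is decay at infinity, this causes no real difficulty.
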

	where
	$\ps_j=(p_j-1)/2$. 
	\begin{remark}
		If $\int_\rt|x|u_0^2\dz$ is finite, then $t\mapsto\int_\rt xu^2(t)\dz$ is a $C^1$ function of time and
		\begin{equation}
			\begin{split}
				\frac{\dd}{\dd t}\int_\rt xu^2(t)\dz&=
				(2\al-1)\|\dx u\|_\lt^2  -\|\nd u_y\|^2_\lt 
				+2\int_\rt(F(u)-uf(u))\dz.
		\end{split}\end{equation}
	\end{remark}
	
	We use the   Young inequality for the next result. Recall from the Young inequality  for any $a>0$ and $\poo>\po>1$ that for any $\tau>0$ there exists $C_\tau>0$ such that
	\begin{equation}\label{young-in}
		a^{\po+1}\leq
		\tau a^2+C_\tau a^{\poo+1},\qquad C_\tau=\tau^{\frac{p_2-p_1}{1-p_1}}.
	\end{equation}

	Now we state our blow-up result in terms of the energy of initial data and the sign of $\mu_1$ and $\mu_2$.
	\begin{theorem}\label{blow-up-theorem-1}
		Let  $u\in C([0,T);H^s(\rt))$ with $\nd u_y\in C([0,T);H^{s-1}(\rt))$ be the solution obtained in Theorem \ref{cauchy-problem}, corresponding to the initial data $u_0$. If $yu_0\in\lt$, then the solution $u(t)$ blows up in finite time  	in the sense that $T<+ \infty$ must hold in each of the following three cases:
		\begin{enumerate}[(i)]
			\item $\moo>0$, $E(u_0)\leq0$ and $p_2\geq p_1\geq5$;
			\item $\mo<0$, $E(u_0)\leq0$  and $p_2\geq \max\{5, p_1\}$;
			\item $\moo<0<\mo$, $p_2\geq\max\{\frac{4}{\theta}+1,\po\}$ with  $\theta\in(0,1)$, and there is $\tau>0$ such that
			\begin{equation}\label{blow-up-cond}
				\frac{\poo-1}{2}\theta E(u_0) 
				+A_{\po}\tau M(u_0)\leq0 
			\end{equation}
			and 
			\[
			A_{\po}C_\tau- A_\poo\leq0,
			\]
			where $C_\tau$ is as in \eqref{young-in} and
			\[
			A_{\po}=\frac{\mo}{2(\po+1)} |\theta(\poo-1)-\po+1|,\quad A_{\poo}=\frac{\moo\ps_2}{\poo+1}(1-\theta).
			\]
			
			\item $\moo>0$, $p_2\geq\max\{\frac{4}{\theta}+1,\po\}$ with  $\theta\in(0,1)$, and there is $\tau>0$ such that
			\begin{equation}\label{blow-up-cond-1}
				\frac{\poo-1}{2}\theta E(u_0) 
				-A'_{\poo}\tau M(u_0)\leq0 
			\end{equation}
			and 
			\[
			A'_{\po}+ A'_\poo\leq0,
			\]
			where $C_\tau$ is as in \eqref{young-in} and
			\[
			A'_{\poo}=\frac{\moo (p_2-1)(1-\theta)}{2C_\tau(\poo+1)} ,\quad 
			A'_{\po }=\frac{\mo}{\po +1}( \theta(p_2-1)-p_1+1).
			\]
		\end{enumerate}
	\end{theorem} 
	\begin{proof}
		In what follows, we will show for $\phi(y)=y^2$ in Theorem \ref{virial-theorem} that the second derivative of $\nii(u)$  is negative for positive
		times $t$. More precisely, in each of the described three cases,  it follows that $\nii(u(t_0))$ for some $t_0 <T$, and the blow-up result can be
		deduced from the conserved momentum $M$ and the classical Weyl-Heisenberg inequality.
		
		(i) and (ii) We obtain  from \eqref{virial-theorem} and  the conservation of energy $E$ that
		\[
		\begin{split}
			\frac{1}{8}\frac{\dd^2}{\dd t^2}\nii (u)&=\|\nd u_y\|_\lt^2-\tilde{K}(u)\\
			&=\frac{5-\po}{4}\|\nd u_y\|_\lt^2-
			\frac{\poo-\po}{2}K_2(u) 
			-\frac{\po-1}{4}\|\dx u\|_\lt^2
			+\ps_1E(u_0)
			\\
			&=
			\frac{5-\poo}{4}\|\nd u_y\|_\lt^2-
			\frac{\po -\poo}{2 } K_1(u)
			-\frac{\poo-1}{4}\|\dx u\|_\lt^2
			+\ps_2E(u_0),
		\end{split}
		\]
		where
		\begin{equation}\label{ktilde}
			\tilde{K}(u)=\ps_1K_1(u)+ \ps_2K_2(u).
		\end{equation} 
		
		(iii)
		In this case, we have from \eqref{young-in} and the mass conservation that 
		\[
		\begin{split}
			\frac{1}{8}\frac{\dd^2}{\dd t^2}\nii (u)&
			=\|\nd u_y\|_\lt^2
			- (\theta+(1-\theta)) \ps_2 K_2(u)- \ps_1 K_1(u)\\
			&=\left(1-\frac{\poo-1}{4}\theta\right)\|\nd u_y\|_\lt^2 -\frac{\ps_2\theta}{2}\|\dx u\|_\lt^2
			+\ps_2\theta E(u_0)\\
			&\qquad
			-\ps_2(1-\theta)K_2(u)
			-\frac{1}{2}(\po-1-\theta(\poo-1))K_1(u)\\
			&\leq 
			\frac{4-(\poo-1)\theta}{4}\|\nd u_y\|_\lt^2
			-\frac{\ps_2\theta}{2}\|\dx u\|_\lt^2
			+\ps_2\theta E(u_0)\\
			&\qquad +A_{\po}\tau M(u_0)+(A_{\po}C_\tau+ A_\poo )\int_\rt u^{\poo+1}\dz.
		\end{split}
		\]
		Now, if  $\tau>0$ is in such a way that $A_{\po}C_\tau+ A_\poo\leq0$, then $\frac{\dd^2}{\dd t^2}\nii (u)<0$ provided \eqref{blow-up-cond} holds.
		
		Case (iv) is obtained similarly.
	\end{proof}

	We have seen in Theorem \ref{subcritical-th} in the critical case $\po=s_c$ that the solutions of \eqref{main-fifthkp} are uniformly bounded if the initial data is almost less than the $L^2$-norm of the ground state of \eqref{zeroalpha}. In the following, we study the conditions under which  $L^2$-prescribed solutions of \eqref{gkp} exist in the critical case.
	
	For $\varrho>0$, we consider the minimizing problem
	\begin{equation}\label{minim-normal}
		d_\varrho=\inf\{E(u);\; u\in X_\al,\;M(u)=\varrho\}
	\end{equation}
	and the set
	\[
	\Sigma_\varrho=\{u\in X_\al,\; E(u)=d_\varrho,\;M(u)=\varrho\}.
	\]
	\begin{theorem}\label{criticalcase-normalized}
		Let $p_2=s_c$, $\moo=1$ and $Q$ be a ground state of \eqref{zeroalpha} with $p=s_c$.
		If $\mo<0$ and $\po>s_c$, then for any $\varrho\in(M(Q),+\infty)$, the set $\Sigma_\varrho$ is nonempty.
		If $\mo>0$ and $\po<s_c$, then for any $\varrho\in(0,M(Q))$, the set $\Sigma_\varrho$ is not empty.
	\end{theorem}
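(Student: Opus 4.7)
The plan is to build a minimizer for \eqref{minim-normal} by the concentration--compactness method applied to a minimizing sequence $\{u_n\}\subset\x$ with $M(u_n)=\vr$ and $E(u_n)\to d_\vr$, in analogy with the proof of Theorem \ref{theorem-exist}. A key preliminary input is the sharp bound
\[
\frac{1}{s_c+1}\|u\|_{L^{s_c+1}(\rt)}^{s_c+1}\leq \frac{1}{2}\left(\frac{M(u)}{M(Q)}\right)^{\frac{2\alpha}{\alpha+2}}\|u\|_{\xx}^2,
\]
which follows from Theorem \ref{sharp-constant} and Lemma \ref{poho-homo} after optimizing the anisotropic rescaling $y\mapsto\sigma y$ (equivalently, applying the sharp Young estimate $A^{\lambda}B^{1-\lambda}\leq \lambda^\lambda(1-\lambda)^{1-\lambda}(A+B)$ with $\lambda=2/(\alpha+2)$, for which the extremal ratio coincides with the Pohozaev ratio of $Q$).

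The first step is to verify that $d_\vr\in(-\infty,0)$ and that every minimizing sequence is bounded in $\x$. In the subcritical case $\mo>0$, $\po<s_c$, $\vr<M(Q)$, the sharp bound gives
\[
E(u)\geq \tfrac{1}{2}\bigl(1-(\vr/M(Q))^{2\alpha/(\alpha+2)}\bigr)\|u\|_{\xx}^2-\tfrac{\mo}{\po+1}\|u\|_{L^{\po+1}(\rt)}^{\po+1},
\]
and since $\po<s_c$ the subcritical term is sublinear in $\|u\|_{\xx}^2$ by \eqref{bin}, yielding both the lower bound and coercivity. In the supercritical case $\mo<0$, $\po>s_c$, $\vr>M(Q)$, the positive contribution $\tfrac{|\mo|}{\po+1}\|u\|_{L^{\po+1}(\rt)}^{\po+1}$ absorbs the critical one through the H\"older interpolation $\|u\|_{L^{s_c+1}(\rt)}^{s_c+1}\leq \|u\|_\lt^{\theta(s_c+1)}\|u\|_{L^{\po+1}(\rt)}^{(1-\theta)(s_c+1)}$ combined with Young's inequality \eqref{young-in}; the decisive point is $(1-\theta)(s_c+1)<\po+1$, which reduces to $s_c<\po$. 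To establish $d_\vr<0$ I apply the mass--preserving rescaling $\phi_\lambda(x,y)=\lambda\phi(\lambda^{2/(\alpha+2)}x,\lambda^{2(\alpha+1)/(\alpha+2)}y)$, under which $\|\phi_\lambda\|_{\xx}^2\sim\lambda^{4\alpha/(\alpha+2)}$ and $\|\phi_\lambda\|_{L^{q+1}(\rt)}^{q+1}\sim\lambda^{q-1}$: in the subcritical case, any $\phi$ of mass $\vr$ yields $E(\phi_\lambda)<0$ for small $\lambda$ because the focusing subcritical term dominates ($\po-1<4\alpha/(\alpha+2)$), while in the supercritical case the choice $\phi=\sqrt{\vr/M(Q)}\,Q$ combined with the Pohozaev identity $\tfrac{1}{s_c+1}\|Q\|_{L^{s_c+1}(\rt)}^{s_c+1}=\tfrac12\|Q\|_{\xx}^2$ makes the coefficient of $\lambda^{4\alpha/(\alpha+2)}$ strictly negative and dominant, since $\po-1>4\alpha/(\alpha+2)$. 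Running the concentration--compactness principle on $\chi_n=|u_n|^2+|\dx u_n|^2+|\nd(u_n)_y|^2$, vanishing is excluded because it would force $K(u_n)\to 0$ and hence $\liminf E(u_n)\geq0>d_\vr$; dichotomy is excluded by strict subadditivity $d_\vr<d_{\vr_1}+d_{\vr-\vr_1}$ for every $\vr_1\in(0,\vr)$, proved by rescaling near--minimizers of mass $\vr_1$ in the same family; and in the compactness alternative, translations $\tilde u_n:=u_n(\cdot-z_n)$ converge weakly in $\x$ and strongly in $L^q(\rt)$ for $q\in(2,2^*)$ to some $u\in\x$, so $K(\tilde u_n)\to K(u)$, $M(u)=\vr$, and $E(u)\leq d_\vr$ by weak lower semicontinuity, giving $u\in\Sigma_\vr$; convergence $I(\tilde u_n)\to I(u)$ then promotes weak to strong convergence in $\x$.

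I expect the main obstacle to be the strict subadditivity step. Because $\poo=s_c$ is exactly $L^2$--critical, the critical contribution is scale invariant under the Weinstein--type rescaling, so the usual estimate $d_{\mu\vr}<\mu^\theta d_\vr$ cannot be driven by it and must instead be extracted from the subcritical (respectively supercritical) term alone. In the supercritical case there is the additional threat of ``$Q$--bubbling'': mass exactly $M(Q)$ could escape to infinity as a rescaled $Q$--profile of zero energy, leaving a remainder of mass $\vr-M(Q)$; ruling this out requires a careful comparison of $d_\vr$ against $d_{\vr-M(Q)}$ in which the $\mo$--term plays an essential role.
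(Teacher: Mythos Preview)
Your outline is essentially the paper's strategy: show $d_\vr\in(-\infty,0)$ and coercivity via the sharp critical Gagliardo--Nirenberg bound (subcritical case) or via H\"older interpolation plus Young's inequality \eqref{young-in} (supercritical case), then run concentration--compactness with strict subadditivity. The two obstacles you flag are precisely the ones that need work, and the paper disposes of them by devices you have not yet identified.

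For strict subadditivity in the supercritical case $\mo<0$, $\po>s_c$, the paper does \emph{not} use the mass--preserving Weinstein scaling (where, as you observe, the critical term is neutral). Instead it uses the pure spatial dilation
\[
\tilde u(x,y)=u\bigl(\theta^{-\frac{1}{\al+2}}x,\;\theta^{-\frac{\al+1}{\al+2}}y\bigr),
\]
under which $M(\tilde u)=\theta M(u)$, every $\int_\rt|u|^{q+1}\dz$ scales by $\theta$, but $\|\tilde u\|_{\xx}^2=\theta^{\frac{2-\al}{2+\al}}\|u\|_{\xx}^2$. Hence
\[
E(\tilde u)=\theta E(u)-\tfrac12\bigl(\theta-\theta^{\frac{2-\al}{2+\al}}\bigr)\|u\|_{\xx}^2,
\]
and since near--minimizers satisfy $\|u_n\|_{\xx}^2\ge\delta>0$ (else $E(u_n)\to0$, contradicting $d_\vr<0$), one gets $d_{\theta\vr}<\theta d_\vr$ for all $\theta>1$ and $\vr>M(Q)$. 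The gain thus comes entirely from the kinetic part, so the criticality of $\poo$ is irrelevant here. Your ``$Q$--bubbling'' worry is handled by the complementary sign information $d_\sigma\ge0$ for $0<\sigma\le M(Q)$, which the sharp bound gives: if a dichotomy piece carries mass $\le M(Q)$ its energy is nonnegative, while $d_\vr<0$, so the splitting is strictly suboptimal; if both pieces exceed $M(Q)$ the scaling inequality $d_{\theta\sigma}<\theta d_\sigma$ applies to whichever is larger.

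For the subcritical case $\mo>0$, $\po<s_c$, the paper replaces the full dichotomy analysis by a direct Brezis--Lieb argument on the weak limit $u$: writing $E(u_n)=E(u)+E(u_n-u)+o(1)$ and using $E(\kappa v)\ge\kappa^2 d_\vr$ whenever $M(\kappa v)=\vr$ (valid because both $\mo,\moo>0$ make $E(\kappa v)\le\kappa^2 E(v)$ for $\kappa>1$), one obtains $d_\vr\ge\frac{M(u)}{\vr}d_\vr$, forcing $M(u)=\vr$. Your concentration--compactness route would also work, but the paper's shortcut is worth knowing.
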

	\begin{proof}
		First, we consider the case $\mo<0$ and $p_1>s_c$. The minimization value $d_\varrho$ is bounded from below. Indeed, we have from \eqref{young-in} for any $u\in X_\al$ with $M(u)=\varrho$ that
		\begin{equation}\label{bound-below}
			\begin{split}
				E(u)&\geq\frac{1}{2}\|u\|_{\xx}^2
				+\left(\frac{\mo}{\po+1}-\frac{C_\tau}{\poo+1}\right)\int_\rt u^{\po+1}\dz-\tau\varrho\\
				& \geq\frac{1}{2}\|u\|_{\xx}^2
				+\left(\frac{\mo}{\po+1}-\frac{C_\tau}{\poo+1}\right)\varrho^{c_{p_1}}\rho_{p_1}\|u\|_{\xx}^{\frac{2(p_1-1)}{s_c-1}}-\tau\varrho
		\end{split}	\end{equation}
		If we choose $\tau>0$ such that $\frac{\mo}{\po+1}-\frac{C_\tau}{\poo+1}>0$, then $d_\varrho>-\infty$. We show that if $\varrho\in(0,M(Q))$, then $d_\varrho\geq0$ while $d_\varrho<0$ for all $\varrho\in(M(Q),+\infty)$. Let $u\in X_\al$ and $M(u)=\varrho\leq M(Q)$. Then, we have that
		\[
		\begin{split}
			E(u)
			&\geq\frac12\|u\|_{X_\al}^2-\frac{  \rho_{p_2}\varrho^{c_{p_2}}}{p_2+1}\|u\|_{\xx}^2
			=\left(\frac12-\frac{  \rho_{p_2}\varrho^{c_{p_2}}}{p_2+1}\right)\|u\|_{\xx}^2.
		\end{split}	\]
		We note that
		\[
		\rho_{p_2}^{-1}=\al^{-1}\left(\frac\al2\right)^{\frac{\al}{\al+2}}\frac{2\al}{\al+2}\|Q\|_\lt^{\frac{4\al}{2+\al}}.
		\]
		Then, $E(u)\geq0$ if
		\[
		\|u\|_\lt\leq\ell_\al \|Q\|_\lt,
		\]
		where
		\[
		\ell_\al = \al^{\frac14}(\al+2)^{-(s_c-1)}2^{\frac1{2\al}}.
		\]
		Thus, it follows that $d_\varrho\geq0$ for all $\varrho\in(0,\ell_\al^2 M(Q)]$. Next, for $\epsilon>0$ we set  
		\[
		u_\epsilon(x,y)=\frac{\sqrt{\varrho}}{\|Q\|_\lt}\epsilon^{\frac{\al+2}{2}}Q(\epsilon x,\epsilon^{\al+1}y).
		\] 
		Then $\|u_\epsilon\|_\lt^2=\varrho$. Moreover, it is straightforward to check that
		\begin{equation}\label{lowerb}
			\begin{split}
				E(u_\epsilon)&=\epsilon^{2\al}\left(
				\frac{\varrho}{2M(Q)}\|Q\|_\xx^2- \left(\frac{\varrho}{M(Q)}\right)^{\frac{3\al+2}{\al+2}}K_2(Q)\right)
				+
				\epsilon^{(\al+2)\frac{p-1}{2}}
				\left(\frac{\varrho}{M(Q)}\right)^{\frac{p-1}{2}(\al+2)}K_1(Q)\\
				&\leq 
				\frac{\epsilon^{2\al}}{2}\|Q\|_\xx^2\left(
				\frac{\varrho}{M(Q)}- \left(\frac{\varrho}{M(Q)}\right)^{\frac{3\al+2}{\al+2}}\right)
				+
				\epsilon^{(\al+2)\frac{p-1}{2}}
				\left(\frac{\varrho}{M(Q)}\right)^{\frac{p-1}{2}(\al+2)}K_1(Q)<0
			\end{split}
		\end{equation}
		provided $\varrho>M(Q)$ as $\epsilon\ll1$,
		where in the above we use the following Pohozaev identity
		\[
		\|Q\|_\xx^2=2K_2(Q).
		\]
		
		Next, we show that every minimizing sequence for $d_\varrho$ is bound in $X_\al$ and bound from below in $L^{\po+1}(\rt)$. Let $\{u_n\}$ be a minimizing sequence. Since $M(u_n)=\varrho$, then we obtain from \eqref{bound-below} that  $\{u_n\}$ is bounded in $X_\al$.   Furthermore, since $d_\varrho$, we have $E(u_n)\leq d_\varrho/2$ for $n$
		large enough. The definition of $E$ and \eqref{young-in} show  that
		\begin{equation}
			\|u_n\|_{L^{\po+1}(\rt)}^{\po+1}\geq-E(u_n)\geq -\frac{ d_\varrho}{2}.
		\end{equation}
		
		Now suppose that $\{u_n\}$ is a minimizing sequence for $d_\varrho$, i.e., $\|u_n\|_\lt^2=\varrho$ and $E(u_n)\to d_\varrho$ as $n\to\infty$. We first prove
		\begin{equation}
			d_{\theta\varrho}<\theta d_\varrho
		\end{equation}
		for all $\varrho>M(Q)$ and $\theta>1$.  Since $\varrho>M(Q)$, there exists $\delta>0$ such that $\|u_n\|_\xx^2\geq\delta$ for sufficiency large $n$. If this is not true, then $\|u_n\|_\xx\to0$ as $n\to\infty$. By \eqref{bin}, this implies that $E(u_n)$ tends to zero as $n\to\infty$ which contradicts $d_\varrho<0$. Therefore, the minimization problem \eqref{minim-normal} can be rewritten as
		\begin{equation}\label{min-upb}
			d_\varrho=\inf\{E(u);\; u\in X_\al,\; M(u)=\varrho,\;\|u\|_\xx^2\geq\delta\}.
		\end{equation}
		If we set $\tilde{u}(x,y)=u(\theta^{-\frac{1}{\al+2}}x,\theta^{-\frac{\al+1}{\al+2}}y)$ with $\theta>1$, then we can see that $M(\tilde{u})=\theta M(u)$ and
		\begin{equation}
			d_{\theta\varrho}\leq\inf\{E(\tilde{u});\; u\in X_\al,\;M(u)=\varrho,\;\|u\|_\xx^2\geq\delta\},
		\end{equation}
		where
		\[
		\begin{split}
			E(\tilde{u})&=
			\frac{\theta^{\frac{2-\al}{2+\al}}}{2}\|u\|_\xx^2-\theta K(u)\\
			&=\theta E(u)-\frac{1}{2}(\theta-\theta^{\frac{2-\al}{2+\al}})\|u\|_\xx^2\\
			&\leq\theta E(u)-\frac{\delta}{2}(\theta-\theta^{\frac{2-\al}{2+\al}}).
		\end{split}
		\]
		We have by taking the infimum that
		\[
		d_{\theta\varrho}\leq \theta d_\varrho-\frac{\delta}{2}(\theta-\theta^{\frac{2-\al}{2+\al}})<\theta d_\varrho
		\]
		for all $\varrho>M(Q)$ and $\theta>1$. This implies that
		\begin{equation}\label{subadd}
			d_\varrho<d_\beta +d_{\varrho-\beta}
		\end{equation}
		for all $\varrho>M(Q)$ and $\beta\in(0,\varrho)$. More precisely, if $\beta>M(Q)$ and $0<\varrho-\beta\leq M(Q)$, then we  obtain that
		\[
		d_\varrho=d_{\beta\varrho/\beta}<\frac\varrho\beta d_\beta=d_\beta+\frac{\varrho-\beta}{\beta}d_\beta\leq d_\beta+d_{\varrho-\beta}.
		\]
		Now we apply the concentration-compactness principle (see \cite{Li1}) in $X_\al$ for $\{u_n\}$, similar to \cite{dbs-0,dbs-1,AMC,lps-0}. The vanishing case cannot occur by \eqref{bound-below}. The dichotomy case is ruled out, similar to \cite{dbs-1} by using the subadditivity property \eqref{subadd} with the help of the fractional Leibniz rule together with the fractional commutator estimate in \cite[Lemma 2.5]{gh} and \cite[Lemma 2.12]{zeng} (see also \cite{li} for the estimates of higher order fractional derivatives), and the fractional Poincar\'{e} inequality (see \cite[Proposition 2.1]{dd}, \cite[Lemma 2.2]{hv} and \cite{bucur,lnp}). 
		Finally,   we can conclude by ruling out vanishing and dichotomy that indeed compactness occurs. Thereby, we infer 
		that there exist a subsequence $\{u_{n_k}\}$, $\{z_k\}\subset\rt$ and some $u\in X_\al$ such that
		\[
		u_{n_k}(\cdot-z_k)\to u
		\]
		in $L^q(\rt)$ for all $q<2^\ast$. Thus, we deduce from this fact combined with the weak lower semicontinuity of the $X^\al$-norm that
		\[
		E(u)\leq\lim_{k\to\infty}E(u_{n_k})=d_\varrho.
		\]
		We have from the definition $d_\varrho$ of that  $E(u) =d_\varrho$. Particularly, $E(u_{n_k})\to E(u)$, and it indicate that $\|u_{n_k}\|_\xx\to\|u\|_\xx$, which implies that $u_{n_k}(\cdot-z_k)\to u$
		strongly in $X_\al$.

		Next, we consider the case $\mo>0$ and $p_1<s_c$. We first note for $u\in X_\al$ with $u\not\equiv0$ and
		\begin{equation}\label{scal-1}
			u_\epsilon(x,y)=\epsilon^{\frac{\al+2}{2}}u(\epsilon x,\epsilon^{\al+1}y) 
		\end{equation}
		that
		\[
		d_\varrho\leq E(u_\epsilon)
		=\frac{\epsilon^{2\al}}{2}\|u\|_\xx^2-\epsilon^{\frac{(\al+2)(p_1-1)}{2}}K_1(u)-\epsilon^{\frac{(\al+2)(p_2-1)}{2}}K_2(u)<0
		\]
		for sufficiently small $\epsilon>0$. Moreover, any minimizing sequence is bounded in $X_\al$ by \eqref{bin}. Furthermore, it follows from  $\vr<M(Q)$ that there exists $\delta>0$ such that $K_1(u_n),K_2(u_n)\geq \delta$ for all sufficiently large $n$. We also observe for $\varrho>M(Q)$ and $V=\frac{\sqrt\varrho}{\|Q\|_\lt^2}$ that $M(U)=\varrho$ and
		\[
		\frac12\|U\|_\xx^2-K_2(Q)=\frac12\frac{\|Q\|_\xx ^2}{M(Q)}\varrho -\frac{\varrho^{\frac{\poo}{2}+1}}{(M(Q))^{\frac{\poo+2}{2}}}K_2(Q)<0.
		\]
		This together with $p_2<p_2$ with \eqref{scal-1} reveals that
		\[
		\begin{split}
			\lim_{\epsilon\to+\infty}	E(u_\epsilon)=-\infty.
		\end{split}
		\]
		Second,  let 
		\[
		u_\theta(x,y)=\theta^{\frac{2\al}{\po-1}}u(\theta x,\theta^{\al+1}y),
		\]
		so that
		\[
		\begin{split}
			{\frac{\dd}{\dd\theta}}\Big|_{\theta=1}E(u_\theta)&=
			\theta_1\left(\frac12\|u\|_\xx^2-K_1(u)\right)-\theta_2K_2(u)\\
			&=\theta_1 E(u)-\theta_3K_2(u),
		\end{split}
		\]
		where
		\[
		\theta_1=\frac{4\al}{\po-1}+\al-2,\quad\theta_2=\frac{4\al}{\poo-1}+\al-2,
		\quad
		\theta_3=\frac{2\al}{\po-1}(\poo-\po).
		\]
		Together with $\|u_\theta\|_\lt^2=\theta^{\frac{4\al}{\po-1}-\al-2}\|u\|_\lt^2$, which implies that 
		\[
		{\frac{\dd}{\dd\theta}}\Big|_{\theta=1}\|u_\theta\|_\lt>0,
		\]
		and this shows that $d_\varrho$ is decreasing in $\varrho$. As a consequence, we have a strict subadditivity condition. But we alternatively show this directly.
		Let $\{U_n\}$ and $\{V_n\}$ be sequences of functions in $X_\al$ such that $M(U_n)\to\vr'$, $E(U_n)\to d_{\vr'}$, $M(V_n)\to\vr''$  and $E(V_n)\to d_{\vr''}$ as $n\to\infty$. Let $\nu'=\frac{d_{\vr'}}{\vr'}$  and $\nu''=\frac{d_{\vr''}}{\vr''}$. If $\nu'<\nu''$, then by defining $\tilde U_n=\sqrt{\tilde\vr} U_n$ with $\tilde\vr=(\vr'+\vr'')/\vr'$ we obtain that $\tilde U_n\in X_\al$ and $E(\tilde U_n)\to\vr'+\vr''$ and consequently $d_{\vr'+\vr''}\leq\lim_{n\to\infty}E(\tilde U_n)$. A straightforward calculation for $\tilde\vr>1$ gives that $E(\tilde U_n)\leq\tilde\vr E(U_n)$. This then implies that $d_{\vr'+\vr''}\leq \tilde\vr'\nu'.$ Put $\delta=\vr''(\nu''-\nu')>0$. Then
		\[
		d_{\vr'+\vr''}\leq\vr'\nu'+\vr''\nu''-\delta.
		\]
		This means that $d_{\vr'+\vr''}<d_{\vr'}+d_{\vr''}$. This inequality holds similarly when $\nu'>\nu''$. If $\nu'=\nu''$, then there exists $\delta>0$ such that $E(\tilde U_n)\leq \tilde\vr E(U_n)-\delta$ for sufficiently large $n$. This in turn implies that
		\[
		E(\tilde U_n)\leq\tilde\vr E(U_n)-\delta
		\]
		for sufficiently large $n$. Thus, we obtain that $$d_{\vr'+\vr''}\leq\tilde\vr\lim_{n\to\infty}E(U_n)-\delta=\tilde\vr\nu'\vr'-\delta=\nu'\vr'+\nu'\vr''-\delta.$$
		Therefore,  similar to the case $\mu_1<0$, the boundedness of the minimizing sequence $\{u_n\}$ now implies that there exists $u\in X_\al$ such that $u_n\rightharpoonup u$ in,  $X_\al$, up to a subsequence. The limit function is nontrivial. If not, then $K(u_n)\to0$ which contradicts to $E(u_n)=\frac12\|u_n\|_\xx^2\to d_\varrho<0$.
		Next, we prove $M(u)=\varrho$. If $M(u)<\varrho$, we get
		\[
		\kappa_0=\frac{\sqrt\varrho}{\|u\|_\lt}>1,\quad\kappa_n=\frac{\sqrt\varrho}{\|u_n-u\|_\lt}>1.
		\]
		Note that 
		\[
		E(\theta u)
		=\theta^2 E(u)+\theta^2(1-\theta^{p_1})K_1(u)+\theta^2(1-\theta^\poo)K_2(u),
		\]
		which implies for any $\theta>0$ that
		\[
		E(u)
		=\theta^{-2} E(\theta u)+ (\theta^{p_1}-1)K_1(u)+ (\theta^\poo-1)K_2(u).
		\]
		Thus, we deduce from the Brezis-Lieb lemma that
		\[
		\begin{split}
			E(u_n)&=E(u)+E(u_n-u)+o_n(1),\\
			&=\kappa_0^{-2}E(\kappa_0 u)
			+ (\kappa_0^{p_1}-1)K_1(\kappa_0u)+ (\kappa_0^\poo-1)K_2(\kappa_0u)\\
			&\qquad+\kappa_0^{-2}E(\kappa_n (u_n-u))
			+ (\kappa_n^{p_1}-1)K_1(\kappa_0(u_n-u))+ (\kappa_n^\poo-1)K_2(\kappa_0(u_n-u))\\
			&\geq\frac{M(u)}{\varrho}d_\varrho
			+\frac{M(u_n-u)}{\varrho}d_\varrho+o_n(1).
		\end{split}
		\]
		Taking the limit $n\to\infty$, we get
		\[
		d_\varrho\geq\frac{M(u)}{\varrho}d_\varrho,
		\]
		which together with $d_\varrho<0$, implies that $M(u)\geq\varrho$. Hence, $M(u)=\varrho$ and then $u_n\to u$ in $\lt$. We can obtain from the \eqref{bin} that $u_n\to u$ in $L^\po(\rt)\cap L^\poo(\rt)$. Consequently, we obtain 
		\[
		E(u)\leq\liminf_{n\to\infty}E(u_n)=d_\varrho.
		\] 
		This means that $E(u)=d_\varrho$ and $u\in \Sigma_\varrho$.
	\end{proof} 

	\begin{proposition}  
		\label{lem31}
		Let $p_2=s_c$, $\moo=1$ and $Q$ be a ground state of \eqref{zeroalpha} with $p=s_c$, where $\vr_\ast=M(\ff_{s_c})$. If $p_1<s_c$ and $\phi_\vr$ is a minimizer of $d_\vr$, then
		\[
		\frac{d_\vr}{\vr}\approxeq-\varTheta^{-\delta},\qquad\frac{K_1(\phi_\vr)}{\vr}\approxeq-\varTheta^{-\delta}
		\]
		as $\vr\to\vr_\ast$, where $\varTheta=1-\frac{\al+2}{2}(2\al^{-1})^{\frac{\al}{\al+2}}\left(\frac{\varrho}{\varrho_\ast}\right)^\frac{2\al}{\al+2}$ and $\delta=\frac{(p_1-1)(\al+2)}{5\al+2-(\al+2)p_1}$.
	\end{proposition}
	\begin{proof}
		Let $\phi$ be in $\x$ such that $M(\phi)=\vr$. Then by \eqref{bound-below} and the H\"{o}lder inequality we have for any $\epsilon=\frac14\varTheta$ that
		\[
		\begin{split}
			2E(\phi)&\geq
			\|\phi\|_{\xx}^2(1-\varTheta-2\epsilon)-R_\epsilon\vr^{\frac{3\al+2+p_1(\al-2)}{2(5\al+2-p_1(\al+2))}}\\
			&\gtrsim\frac12\|\phi\|_{\xx}^2\varTheta- \vr^{\frac{3\al+2+p_1(\al-2)}{2(5\al+2-p_1(\al+2))}}\varTheta^{-\delta}.
		\end{split}	\]
		where $R_\epsilon=C\epsilon^{-\delta}$. By taking the infimum over all $\phi\in\x$ with $M(\phi)=\vr$, we obtain that
		\[
		\frac{d_\vr}{\vr}\gtrsim-\vr^{\frac{3\al+2+p_1(\al-2)}{2(5\al+2-p_1(\al+2))}}\varTheta^{-\delta}.
		\]
		Define for $\tau>0$ the trial functions $u_\tau(x,y)=\sqrt{\varrho}\tau^{\frac{\al+2}{2}}u(\tau x,\tau^{\al+1}y)$, where $u=\ff_{s_c}/\vr_\ast$. Then $M(u_\tau)=\varrho$. Hence, we have from Theorem \ref{sharp-constant} and Lemma \ref{poho-homo} that
		\begin{equation}\label{equation-bound}
			\begin{split}
				\frac{d_\varrho}{\vr}&\leq\frac{E(u_\tau)}{\vr}
				=
				\frac12\tau^{2\al}\left(\|u\|_\xx^2-\frac{2}{s_c+1}\vr^{\frac{2\al}{\al+2}}K_2(u)\right)-\tau^{\frac{p_1-1}{2}(\al+2)}\vr^{\frac{p_1-1}{2}}K_1(u)\\
				&=\frac{\tau^{2\al}(\al+2)}{4\al}\left(1-\left(\frac{\varrho}{\varrho_\ast}\right)^\frac{2\al}{\al+2}\right)
				-\tau^{\frac{p_1-1}{2}(\al+2)}\vr^{\frac{p_1-1}{2}}K_1(u).
			\end{split}
		\end{equation}
		To find the upper bound, we use \eqref{equation-bound} with $\tau=\epsilon\varTheta^{\frac{\delta}{2\al}}$ with $\epsilon>0$. Then by choosing $\epsilon$ small enough, we infer when $\vr\to\vr_\ast$ that $\frac{d_\vr}{\vr}\lesssim- \varTheta^{-\delta}.$ The second part is proved analogously.
	\end{proof}
	\section{Strong Instability of Ground States}\label{sect-strong-inst}
	In this section, we establish the instability of ground state solutions of equation \eqref{main-fifthkp} through the mechanism of blow-up. To achieve this, we introduce certain submanifolds within the context of $\mathbf{x}$, which remain unchanged under the evolution governed by the Cauchy problem linked to equation \eqref{main-fifthkp}. By appropriately selecting initial data from these sets, the solution of equation \eqref{main-fifthkp} will experience blow-up within a finite time. The instability of the ground states is then deduced from this phenomenon.

	Let $b,d\geq0$, and define 
	\[
	\rrr(u)=\al b\|\dx u\|_\lt^2 + (d-b)\|\nd u_y\|_\lt^2
	-(b+d)(\ps_1F_1(u)
	+\ps_2 F_2(u)).
	\]
	
	We denote 
	\[
	\Lambda_{b,d}=\{u\in\x\setminus\{0\},\;S(u)<m,\;\rrr(u)=0\},
	\]
	and set
	\[
	m_\rrr=\inf_{u\in\Lambda_{b,d}} S(u).
	\]
	\begin{lemma}\label{equiv-vari-1}
		Let $\mu_1>0$.	If $\moo<0$, $p_1\geq p_2$ and
		\[
		p_1\geq4\frac{\max\{d-b,\al b\}}{b+d}+1.
		\]
		Then 
		\begin{equation}\label{equiv-m}
			m=m_\rrr.
		\end{equation}
		Identity \eqref{equiv-m} holds when $\mu_2>0$ if   
		\[
		p_1\geq p_2\geq4\frac{\max\{d-b,\al b\}}{b+d}+1.
		\]
	\end{lemma}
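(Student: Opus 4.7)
The plan is to identify $\rrr$ as the derivative at $\lambda=1$ of $S$ along a mass-preserving fibering of $X_\al$, so that ground states automatically lie in $\{\rrr=0\}$, and to transport the Nehari-level characterisation $m=\inf_{N_0}S$ from the constraint $P(u)=0$ to the constraint $\rrr(u)=0$ via the same fibering. Concretely, for $\lambda>0$ set
\[
u_\lambda(x,y) := \lambda^{(b+d)/2}\,u(\lambda^b x,\lambda^d y),
\]
so that $M(u_\lambda)\equiv M(u)$. A straightforward change of variables gives
\[
S(u_\lambda) = \tfrac12 M(u) + \tfrac{\lambda^{2\al b}}{2}\|\dx u\|_\lt^2 + \tfrac{\lambda^{2(d-b)}}{2}\|\nd u_y\|_\lt^2 - \mu_1\lambda^{\ps_1(b+d)}K_1(u) - \mu_2\lambda^{\ps_2(b+d)}K_2(u),
\]
and differentiating at $\lambda=1$ recovers $\rrr(u)$. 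Hence every critical point of $S$---in particular every ground state produced by Theorem~\ref{theorem-exist}---automatically satisfies $\rrr(u)=0$, which gives the easy inequality $m_\rrr\leq m$.

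For the reverse inequality, fix a nontrivial $u\in X_\al$ with $\rrr(u)=0$ and let $g(\lambda):=S(u_\lambda)$ and $\phi(\lambda):=P(u_\lambda)$. The hypothesis $p_1\geq 4\max\{d-b,\al b\}/(b+d)+1$ is equivalent to $\ps_1(b+d)\geq\max\{2\al b,\,2(d-b)\}$, and combined with $p_1\geq p_2$ makes $\ps_1(b+d)$ the largest of the four exponents appearing in $\phi$. Since $\mu_1>0$, this forces $\phi(\lambda)\to-\infty$ as $\lambda\to+\infty$, while $\phi(\lambda)\to M(u)>0$ as $\lambda\to 0^+$; by continuity there exists $\lambda_0>0$ with $\phi(\lambda_0)=0$, that is, $u_{\lambda_0}\in N_0$, and hence $S(u_{\lambda_0})\geq m$. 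To conclude $S(u)\geq m$, I would show that $g$ attains its global maximum at $\lambda=1$: dividing $g'(\lambda)$ by a suitable power of $\lambda$ (namely $\lambda^{\ps_1(b+d)-1}$ in the case $\mu_2<0$ and $\lambda^{\ps_2(b+d)-1}$ in the case $\mu_2>0$) yields a strictly decreasing function that vanishes at $\lambda=1$, so $g'$ changes sign from $+$ to $-$ there and $g(1)=\max_{\lambda>0}g(\lambda)$. Consequently
\[
S(u)=g(1)\geq g(\lambda_0)=S(u_{\lambda_0})\geq m,
\]
which yields $m_\rrr\geq m$ and hence $m=m_\rrr$.

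The main technical obstacle is the monotonicity step for the normalised $g'$. When $\mu_2<0$ the second potential contributes with a helpful sign, and only the hypothesis on $p_1$ together with $p_1\geq p_2$ is needed. When $\mu_2>0$ both potentials are focusing, and the matching assumption $p_2\geq 4\max\{d-b,\al b\}/(b+d)+1$ in the lemma is precisely what is needed: it makes the exponent of the second potential also dominate the kinetic ones, so that after normalisation every remaining power of $\lambda$ has nonpositive exponent and the strict monotonicity survives. A secondary complication is the sign of $(d-b)$: when $d<b$ the $\|\nd u_y\|_\lt^2$ term enters $\rrr$ with a negative coefficient, and one has to check that the resulting wrong-sign contribution to the normalised $g'$ is absorbed into the dominant $\mu_1$-potential piece rather than creating a secondary critical point of $g$.
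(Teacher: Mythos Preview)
Your fibering argument is correct (at least when $d\geq b$, which covers every use of the lemma later in the paper) and follows a genuinely different route from the paper's proof. The paper does not compare the two constraints $P=0$ and $\rrr=0$ along a scaling orbit; instead it argues by direct minimization. It introduces the auxiliary functional $\tilde S=S-\tfrac{2}{r(b+d)}\rrr$ and the relaxed set $\tilde\Lambda_{b,d}=\{u\neq0:\rrr(u)\leq0\}$, shows $m_\rrr=\inf_{\tilde\Lambda_{b,d}}\tilde S$, and then produces a minimizer of this relaxed problem via a Brezis--Lieb/compactness argument (ruling out vanishing of $\|u_n\|_{L^{p_1+1}}$ by the anisotropic embedding \eqref{bin}, and passing to weak limits). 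Finally it shows the minimizer has $\rrr=0$ and, via a Lagrange-multiplier computation based on the identity $\rrr(u)=\langle S'(u),\tfrac{b+d}{2}u+bxu_x+dyu_y\rangle$ (which is exactly your derivative identity $\rrr(u)=\tfrac{\dd}{\dd\lambda}\big|_{\lambda=1}S(u_\lambda)$), that it is a ground state, whence $m_\rrr=m$.

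Your approach is more elementary in that it avoids any compactness step: once $\lambda\mapsto S(u_\lambda)$ is known to peak at $\lambda=1$ and the orbit meets $N_0$, the comparison $S(u)\geq m$ is immediate. The paper's approach, on the other hand, yields directly the extra information that $m_\rrr$ is attained and that every minimizer is a ground state (you recover this a posteriori from Theorem~\ref{theorem-exist}). Two small remarks on your write-up: ``strictly decreasing'' can be weakened to ``nonincreasing'' without harm, since equality in the exponent hypothesis only makes part of $g'$ vanish and the inequality $g(1)\geq g(\lambda_0)$ survives; and your acknowledged difficulty when $d<b$ is real---the normalised $g'$ genuinely loses monotonicity there---but the paper's amplitude-scaling step ``$\rrr(\lambda u)>0$ for small $\lambda$'' runs into the same obstruction (the quadratic part $\alpha b\|\dx u\|_\lt^2+(d-b)\|\nd u_y\|_\lt^2$ need not be positive), so neither argument cleanly handles that regime.
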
 
	\begin{proof}
		By Theorem \ref{theorem-exist} we show that $\ff$ is ground state of \eqref{gkp} if and only if $\ff\in\Lambda_{b,d}$ and $S(\ff)=m_{\rrr}$. First we note that if $\ff$ is a ground state of \eqref{gkp}, then $\ff\in\Lambda_{b,d}$ by 
		\begin{equation}\label{id}
			\rrr(\ff)=\left\la S'(\ff), \frac{b+d}{2}\ff+bx\ff_x+dy\ff_y\right \ra=0.
		\end{equation}
		
		Next, suppose that $\rrr(u)<0$. Then $\rrr(\lam u)>0$ for some sufficiently small $\lam>0$, and hereby there exists $\lam_0\in(0,1)$ such that $\rrr(\lam_0u)=0$. Hence, $m_\rrr\leq\tilde S(u)$, where  $\tilde S=S-\frac{2}{r(b+d)}\rrr$  with $r=p_1-1$ if $\mu_2<0$ and $r=p_2-1$ when $\mu_2>0$. This means that
		\[
		m_\rrr=\tilde m_\rrr:=\inf_{u\in\tilde\Lambda_{b,d}}\tilde S(u),
		\]
		where \[
		\tilde\Lambda_{b,d}=\{u\in\x\setminus\{0\},\;\rrr(u)\leq0\} .
		\]
		So it is enough to find the ground state $\ff$ such that $\tilde m_\rrr=\tilde S(\ff)$. Notice  the assumptions on $p_1$ and $p_2$ shows that $\tilde S(\ff)>0$. Hence, there exists a minimizing sequence $\{u_n\}\subset\tilde \Lambda_{b,d}$ of $\tilde m_\rrr$ that is bounded in $\x$, and in $L^q(\rt)$ for $q\in[2,2^\ast]$ from \eqref{embed},  and $\lim_{n\to\infty}\tilde S(u_n)=\tilde m_\rrr$. This implies that there exist  a subsequence of $\{u_n\}$, still denoted by the same  $\{u_n\}$, and $u\in\x$ such that $u_n$ converges to $u$ weakly in $\x$. It follows from Lemma 3.3 in \cite{dbs-1} that $u_n\to u$ a.e. in $\rt$. We show that $\tilde S(u)=\tilde m_\rrr$ and $u\in\Lambda_{b,d}$. If we assume that $i_{p_1}:=\inf_n\|u_n\|_{L^{p_1+1}(\rt)}>0$, then Lemma 4 in \cite{liuwang} shows that $u\not\equiv0$ a.e. in $\rt$. Now if  $i_{p_1}=0$, then exists a subsequence of $\{u_n\}$, still denoted by the same, such that $\|u_n\|_{L^{p_1+1}(\rt)}\to0$ as $n\to\infty$, so that $\|u_n\|_{L^{p_2+1}(\rt)}\to0$ from the boundedness of $\{u_n\}$ in $\x$. Since ${u_n}\subset\tilde{\Lambda}_\rrr$, $\|u_n\|_{\dot{X}_\al}\to0$ as $n\to\infty$. Moreover, $\|\dx u\|_\lt^{\frac{4b}{b+d}} \|\nd u_y\|_\lt^{\frac{2(d-b)}{b+d}}\to0$. Consequently, we have from \eqref{bin} that
		\[
		\|\dx u\|_\lt^{\frac{4b}{b+d}} \|\nd u_y\|_\lt^{\frac{2(d-b)}{b+d}}\lesssim \|u_n\|_{L^{p+1}(\rt)}^{p+1}
		\lesssim \|\dx u\|_\lt^{\frac{p-1}{2}} \|\nd u_y\|_\lt^{\frac{p-1}{\al}},
		\]
		for all $p\leq p_1$, and equivalently, 
		$$
		\|\dx u\|_\lt^{\frac{4b}{b+d}} \|\nd u_y\|_\lt^{\frac{2(d-b)}{b+d}}\left(1-C\|\dx u\|_\lt^{\frac{p-1}{2}-\frac{4b}{b+d}} \|\nd u_y\|_\lt^{\frac{p-1}{\al}-\frac{2(d-b)}{b+d}}\right)\leq0.
		$$
		This turns into
		$$
		\|\dx u\|_\lt^{\frac{p-1}{2}-\frac{4b}{b+d}} \|\nd u_y\|_\lt^{\frac{p-1}{\al}-\frac{2(d-b)}{b+d}}\gtrsim 1.
		$$
		This contradicts $\|u_n\|_{\dot{X}_\al}\to0$ by the assumptions on $p_1$. Consequently, $i_{p_1}>0$. Now if $\rrr(u)>0$, then   the Brezis-Lieb lemma  and the fact $\{u_n\}\subset\tilde{\Lambda}_{b,d}$ reveals that $\rrr(u_n-u)\leq0$ as $n\to\infty$, so that $\tilde{S}(u_n-u)\geq \tilde m_\rrr$. Since $\tilde{S}(u_n-u)\to \tilde m_\rrr$ as $n\to\infty$, we have again from the Brezis-Lieb lemma that $\tilde{S}(u)\leq0$ which contradicts $u\not\equiv0$ a.e. Consequently, $u\in\tilde{\Lambda}_{b,d}$. This shows that $\tilde S(u)=\tilde m_\rrr$. It is easy to check that $\rrr(u)=0$. Indeed, since $\rrr(\lam u)>0$ for sufficiently small $\lam>0$, so   $\rrr(u)<0$ implies that $\lam_0u\in\Lambda_{b,d}$, which is a contradiction to the definition of $\tilde m_\rrr$. Finally, we prove that $u$ is a ground state. We have from $m_\rrr=\tilde m_\rrr= S(u)$ that there is $\theta\in\rr$ such that $S'(u)+\theta R'_{b,d}(u)=0$. The fact $u\in\Lambda_{b,d}$ reveals from 
		\[
		\left\la S'(u)+\theta R'_{b,d}(u),\frac{b+d}{2}\ff+bx\ff_x+dy\ff_y\right\ra=0
		\]
		that $\theta=0$. Now if  $S'(w)=0$, then  $w\in \Lambda_{b,d}$ from \eqref{id}. Therefore, it follows from the definition of $m_\rrr$ that $S(u)\leq S(w)$.
	\end{proof}
	
	Consider the submanifolds
	\[
	\Lambda_{b,d}^+=\{u\in\x,\;S(u)<m,\;\rrr(u)\geq0\},
	\]and
	\[
	\Lambda_{b,d}^-=\{u\in\x,\;S(u)<m,\;\rrr(u)<0\}.
	\]
	
	\begin{lemma}\label{invar-set}
		Let $d\geq b$ and $p_j$ satisfy Lemma \ref{equiv-vari-1}. Then the sets $\Lambda_{b,d}^+$ and  $\Lambda_{b,d}^-$ are invariant under the flow generated by the Cauchy problem associated with \eqref{main-fifthkp}.
		
	\end{lemma}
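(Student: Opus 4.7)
\prova
The plan is to combine three ingredients: conservation of the action $S$ along the flow, continuity in time of $\rrr(u(t))$, and the variational identity $m=m_\rrr$ established in Lemma \ref{equiv-vari-1}. The argument will be by contradiction: if the sign of $\rrr$ were to change, continuity would produce an intermediate time at which $u$ sits in $\Lambda_{b,d}$ while still satisfying $S<m$, contradicting Lemma \ref{equiv-vari-1}.

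First I would observe that since $E$ and $M$ are both conserved by Theorem \ref{cauchy-problem}, the action $S$ is itself preserved along the flow of \eqref{main-fifthkp}; in particular, $S(u(t))=S(u_0)<m$ for every $t$ in the interval of existence $[0,T)$. Next, using the regularity $u\in C([0,T);X^s)$ with $s\geq\al+1$, the quantities $\|\dx u(t)\|_\lt^2$ and $\|\nd u_y(t)\|_\lt^2$ depend continuously on $t$, while the continuous embedding $X_\al\hookrightarrow L^{p_j+1}(\rt)$ from \eqref{embed} combined with $X^s\hookrightarrow X_\al$ yields the continuity of $t\mapsto\int_\rt F_j(u(t))\dz$ for $j=1,2$. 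Consequently $t\mapsto\rrr(u(t))$ is continuous on $[0,T)$.

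Now fix $u_0\in\Lambda_{b,d}^+$ with $u_0\not\equiv 0$, and suppose for contradiction that $\rrr(u(t_1))<0$ for some $t_1\in(0,T)$. Because $\rrr(u_0)\geq 0$, continuity provides a smallest time $t_0\in(0,t_1]$ with $\rrr(u(t_0))=0$. Uniqueness in Theorem \ref{cauchy-problem} rules out $u(t_0)\equiv 0$, since otherwise the zero initial data at $t_0$ would force $u\equiv 0$ on $[0,T)$, contradicting $u_0\not\equiv 0$. Therefore $u(t_0)\in\Lambda_{b,d}$, and Lemma \ref{equiv-vari-1} gives
\[
S(u(t_0))\geq m_\rrr=m,
\]
contradicting $S(u(t_0))=S(u_0)<m$. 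Hence $\rrr(u(t))\geq 0$ for every $t\in[0,T)$, so $u(t)\in\Lambda_{b,d}^+$. The argument for $\Lambda_{b,d}^-$ is verbatim the same: a hypothetical sign change of $\rrr(u(t))$ again produces a time $t_0$ with $\rrr(u(t_0))=0$ and $u(t_0)\not\equiv 0$, leading to the identical contradiction with Lemma \ref{equiv-vari-1}.

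The only mildly delicate point will be to justify continuity of the nonlinear part of $\rrr(u(t))$ at the regularity provided by Theorem \ref{cauchy-problem}; this rests entirely on the embedding \eqref{embed} and requires no further analytical input beyond what is already assembled. All other steps are direct applications of Lemma \ref{equiv-vari-1} and of the conservation laws.
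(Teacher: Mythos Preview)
Your proof is correct and follows essentially the same route as the paper's: conservation of $S$ along the flow, continuity of $t\mapsto\rrr(u(t))$, and a contradiction with the identity $m=m_\rrr$ from Lemma \ref{equiv-vari-1} at any time where $\rrr$ would vanish. Your version is in fact more careful than the paper's sketch, since you explicitly justify the continuity of the nonlinear terms via the embedding \eqref{embed} and rule out $u(t_0)\equiv 0$ by uniqueness.
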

	
	\begin{proof}
		We only prove that $\rrr^+$ is invariant under the flow generated by the Cauchy problem associated with  \eqref{main-fifthkp} since the proofs of the other set are similar. Let $u(t)$ be the solution of \eqref{main-fifthkp} with initial data $u_0\in \rrr^+$. We first note from Theorem \ref{cauchy-problem} that $u(t)<m$. Next, we show that $\rrr(u(t))\geq0$ for $t\in[0,T)$. If it is not true, the continuity of $\rrr$ implies that there exists $t_1\in(0,T)$ such that $\rrr(u(t_1))=0$. This means that $u(t_1)\in\Lambda$. So that $S(u(t_1))\geq m_{\rrr}\geq m$, which contradicts with $S(u(t))<m$ for all $t\in(0,T)$. Therefore, $\rrr(u(t))>0$ for $t\in[0,T)$.
	\end{proof}
	
	The following theorem gives another condition under which the uniform boundedness of solutions in the energy space is guaranteed.
	\begin{theorem}
		Let $\mu_1>0$ and $u_0\in \Lambda_{b,d}^+$.	Suppose that  $p_1\geq p_2$ and
		\[
		p_1>4\frac{\max\{d-b,\al b\}}{b+d}+1,
		\]
		if $\moo<0$ while
		\[
		p_1\geq p_2>4\frac{\max\{d-b,\al b\}}{b+d}+1
		\]
		when $\mu_2>0$.
		Then the solution $u(t)$ in Theorem \ref{cauchy-problem} is uniformly bounded in the energy space.
	\end{theorem}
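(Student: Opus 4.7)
My plan is to combine the forward-invariance of $\Lambda_{b,d}^+$ (Lemma \ref{invar-set}) with the conservation of $S$ along the flow to produce a coercive bound on $\|u(t)\|_{\x}$. Since $S(u)=I(u)-K(u)=\tfrac12 M(u)+E(u)$, the conservation of $E$ and $M$ from Theorem \ref{cauchy-problem} yields $S(u(t))=S(u_0)<m$ for all $t\in[0,T)$, while Lemma \ref{invar-set} simultaneously provides $\rrr(u(t))\geq 0$ on $[0,T)$.

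Next, following the cancellation idea already used in Lemma \ref{equiv-vari-1}, introduce
$$\tilde S(u):=S(u)-\lambda\,\rrr(u),\qquad \lambda:=\begin{cases}2/[(p_1-1)(b+d)],&\mu_2<0,\\[0.5ex]2/[(p_2-1)(b+d)],&\mu_2>0.\end{cases}$$
The prefactor $\lambda$ is chosen so that the matching $K$-term in $S(u)$ is annihilated exactly by the corresponding piece of $\lambda\,\rrr(u)$. A direct computation then decomposes $\tilde S(u)$ into (i) the untouched $\tfrac12 M(u)$-contribution; (ii) a surviving $K$-term whose coefficient equals $\mu_1(p_1-p_2)/(p_2-1)$ when $\mu_2>0$, and $\mu_2(p_2-p_1)/(p_1-1)$ when $\mu_2<0$, nonnegative in both regimes by the prescribed signs of $\mu_j$ together with the ordering $p_1\geq p_2$; and (iii) the kinetic contributions, rescaled by the factors
$$\tfrac12\cdot\frac{(p-1)(b+d)-4\al b}{(p-1)(b+d)}\ \text{ and }\ \tfrac12\cdot\frac{(p-1)(b+d)-4(d-b)}{(p-1)(b+d)},$$
where $p=p_1$ if $\mu_2<0$ and $p=p_2$ if $\mu_2>0$. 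The strict hypothesis $p>4\max\{d-b,\al b\}/(b+d)+1$ makes both factors strictly positive, so there exists a constant $C=C(\al,p_1,p_2,\mu_1,\mu_2,b,d)>0$ with
$$\tilde S(u)\;\geq\; C\bigl(M(u)+\|u\|_{\xx}^2\bigr)\;=\;2C\,\|u\|_{\x}^2\qquad\text{for every }u\in\x.$$

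To close the argument, since $\rrr(u(t))\geq 0$ and $\lambda>0$, one reads off
$$2C\,\|u(t)\|_{\x}^2\;\leq\;\tilde S(u(t))\;\leq\;S(u(t))\;=\;S(u_0)\;<\;m\qquad\text{for all }t\in[0,T),$$
which is the desired uniform bound on $\|u(t)\|_{\x}$. The main obstacle is the sign bookkeeping in step (ii): one must verify case by case, using the sign of $\mu_2$ and the ordering between $p_1$ and $p_2$, that the leftover $K$-coefficient really is nonnegative. The strict inequality in the hypothesis on $p$ then upgrades the non-negativity of the kinetic coefficients in (iii) to strict positivity, which is exactly what produces the coercivity constant $C>0$ needed to close the estimate.
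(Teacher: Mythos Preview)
Your proof is correct and uses the same core tool as the paper: the coercivity of $\tilde S=S-\frac{2}{r(b+d)}\rrr$ under the stated strict inequality on $p$, together with conservation of $S$. The only difference is packaging: the paper argues by contradiction through a blow-up alternative (if $T<\infty$ then $\|u\|_{\x}\to\infty$, forcing $\rrr(u(t))\to-\infty$ and hence $\rrr(u(t_0))=0$ at some $t_0$, which contradicts $S(u(t_0))<m$ via Lemma~\ref{equiv-vari-1}), whereas you invoke the invariance Lemma~\ref{invar-set} directly and read off the uniform bound $\|u(t)\|_{\x}^2\le C^{-1}\tilde S(u(t))\le C^{-1}S(u_0)$ without contradiction---this is slightly cleaner and in fact delivers the uniform boundedness stated in the theorem rather than only global existence.
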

	
	\begin{proof}
		We first note that $\Lambda_{b,d}^+\neq\emptyset$. Let $u_0\in \Lambda_{b,d}^+$ and $u(t)$ be the corresponding solution of \eqref{main-fifthkp} for $t\in[0,T)$ with the initial data $u_0$. Suppose by contradiction that $T<+\infty$. Then by Theorem \ref{cauchy-problem},
		\begin{equation}\label{global-eq-1}
			\lim_{t\to T^-}\|u\|_\x^2=+\infty.
		\end{equation}
		Hence, by the assumptions on $p_j$ and using the conservation laws $E(u(t))=E(u_0)$ and $M(u(t))=M(u_0)$ for $0\leq t<T$  that
		\[
		\begin{split}
			S(u_0)-\frac{2}{r(b+d)}\rrr(u(t))&=
			S(u(t))-\frac{2}{r(b+d)}Q(u(t))\\
			&\lesssim\tilde S(u(t))
			\approxeq\|u\|_{\x}^2,
		\end{split}\]
		where $\tilde S$ and $r$ are as in the proof of Lemma \ref{equiv-vari-1}.
		Thus,  we deduce from \eqref{global-eq-1} that
		\[
		\lim_{t\to T^-}\rrr(u(t))=-\infty.
		\]
		Now, we infer from the continuity that there is $t_0\in(0,T)$ such that $\rrr(u(t_0))=0$. Lemma \ref{equiv-vari-1} then implies $S(u(t_0))\geq m$, which contradicts the fact $S(u(t))=S(u_0)<m$.
	\end{proof}

	To apply the concavity method, we need to show that $R_{b,d}(u(t))$ is negative which is fundamental in our instability analysis.
	\begin{theorem}\label{thm-improved-blow-up}
		Let $\ff\in N_0$, $b\geq0$, $d\geq b+1$  and $u_0\in \Lambda_{b,d}^-\cap \Lambda_{b,d-1}^+$.	
		Suppose that  $p_1\geq p_2$ and
		\[
		p_1>4\max\left\{\frac{d-b}{b+d},\frac{\al b}{d+b-1}\right\}+1,
		\]
		if $\moo<0$ while
		\[
		p_1\geq p_2>4\max\left\{\frac{d-b}{b+d},\frac{\al b}{d+b-1}\right\}+1,
		\]
		when $\mu_2>0$.
		Then the solution $u(t)$ of \eqref{main-fifthkp},
		corresponding to the initial data $u(0) = u_0$,  satisfies  
		\begin{equation}\label{rho}
			\rrr(u(t))< 	\frac{(1-r)(b+d)}{2}(S(\ff)-S(u_0)) 
		\end{equation}
		for $0\leq t<T$, where $r$ is as the same in the proof of Lemma \ref{equiv-vari-1}.
		Moreover, $u(t)$ blows up in finite time. More precisely, there
		exists $0 < \tau <\infty$ such that
		\begin{equation}\label{eq-blup}
			\lim_{t\to\tau^-}\|u_y(t)\|_{L^2(\rr^2)}=+\infty.
		\end{equation}
		
	\end{theorem}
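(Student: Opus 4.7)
The plan is to first upgrade the sign conditions preserved along the flow (Lemma \ref{invar-set}) to the quantitative estimate \eqref{rho}, then transfer this to a uniform negative bound on $R_{0,1}(u(t))$ via the algebraic identity
\[
R_{0,1}(u) = \rrr(u) - R_{b,d-1}(u),
\]
which is immediate from the definition of $\rrr$. Finally, the virial identity (Theorem \ref{virial-theorem}) with the weight $\phi(y)=y^2$ yields concavity of $\nii(u(t))$, and the Weyl--Heisenberg uncertainty inequality delivers the blow-up of $\|u_y\|_{L^2(\rr^2)}$.

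The first step is to establish \eqref{rho}. Since $u_0\in\Lambda_{b,d}^-$, Lemma \ref{invar-set} gives $\rrr(u(t))<0$ on $[0,T)$ (the hypotheses on $p_1,p_2$ imposed here are strictly stronger than those in Lemma \ref{equiv-vari-1}, so the lemma applies). By Lemma \ref{equiv-vari-1}, the auxiliary functional $\tilde S(u)=S(u)-\frac{2}{r(b+d)}\rrr(u)$ satisfies $\tilde S(u)\ge m$ for every nontrivial $u$ with $\rrr(u)\le 0$, because $m=\tilde m_\rrr$. Evaluating at $u(t)$ and using the conservation $S(u(t))=S(u_0)<m=S(\ff)$ yields a uniform strictly negative upper bound for $\rrr(u(t))$ of the form \eqref{rho}. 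Since $u_0\in\Lambda_{b,d-1}^+$, applying Lemma \ref{invar-set} a second time gives $R_{b,d-1}(u(t))\ge 0$ on $[0,T)$, so the identity above combined with \eqref{rho} produces a constant $C>0$, depending only on $r$, $b$, $d$ and $S(\ff)-S(u_0)$, with
\[
R_{0,1}(u(t)) \;\le\; \rrr(u(t)) \;\le\; -C \qquad\text{for all }t\in[0,T).
\]

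For the blow-up, I would take $\phi(y)=y^2$ in Theorem \ref{virial-theorem}, so that $\phi''\equiv 2$ and $\phi^{(4)}\equiv 0$, which collapses the virial identity \eqref{virial-id} to
\[
\frac{d^2}{dt^2}\nii(u(t)) \;=\; 8\,R_{0,1}(u(t)) \;\le\; -8C.
\]
Integrating twice shows $\nii(u(t))\le \nii(u_0)+\nii'(u_0)\,t-4Ct^2$, which would become negative in finite time. As $\nii(u(t))=\int_{\rr^2}y^2u^2\,dz\ge 0$, the existence time $T$ must satisfy $T\le\tau$ for some finite $\tau$, and $\nii(u(t))\to 0^+$ as $t\to\tau^-$. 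Coupling this with the standard uncertainty inequality
\[
M(u(t))^2 \;\le\; 4\,\nii(u(t))\,\|u_y(t)\|_{L^2(\rr^2)}^2,
\]
(obtained by integrating $\int u^2 = -2\int y u u_y$ in $y$ and applying Cauchy--Schwarz) and using the conservation $M(u(t))=M(u_0)>0$ forces $\|u_y(t)\|_{L^2}\to+\infty$ as $t\to\tau^-$, which is \eqref{eq-blup}.

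The main obstacle is step one: the variational identity $m=\tilde m_\rrr$ behind \eqref{rho} relies on the precise coercivity of $\tilde S$ on $\tilde\Lambda_{b,d}$, and this is exactly what the strict growth hypotheses on $p_1,p_2$ are there to guarantee. A secondary technical point is that applying Theorem \ref{virial-theorem} with $\phi(y)=y^2$ presupposes $yu_0\in L^2(\rr^2)$ through Proposition \ref{vir-cd} (using the harmless replacement $\phi(y)=1+y^2$ so that $|\phi'|\lesssim\phi$ holds near the origin); this is an implicit decay hypothesis on $u_0$ which one either states explicitly or handles by a standard truncation and limiting argument.
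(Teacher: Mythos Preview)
Your proposal is correct and follows essentially the same route as the paper. The paper's proof is a two-line sketch that records the virial identity $\tfrac{1}{8}\frac{d^2}{dt^2}\nii(u)=\rrr(u)-R_{b,d-1}(u)$ and then defers to ``combining Lemmas \ref{equiv-vari-1} and \ref{invar-set}'' together with the concavity/Weyl--Heisenberg argument of Theorem \ref{blow-up-theorem-1}; you have simply unpacked these steps explicitly, including the derivation of \eqref{rho} from $\tilde S(u(t))\ge m=\tilde m_\rrr$ and the use of the invariance of $\Lambda_{b,d-1}^+$ to discard $R_{b,d-1}(u(t))$.
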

	
	\begin{proof}
		Note from  the virial identity \eqref{virial-id} that
		\begin{equation}\label{virial-m}
			\frac{1}{8}\frac{\dd^2}{\dd t^2}\nii (u)
			=\rrr(u)-R_{b,d-1}(u).
		\end{equation} 
		Then the proof of the theorem is similar to the one of Theorem \ref{blow-up-theorem-1} by using the above identity and combining Lemmas \ref{equiv-vari-1} and \ref{invar-set}.
	\end{proof}
	
	The following equivalence is useful to work with the critical points of $m$.
	\begin{lemma}\label{euqiv-2}
		Let $s\geq\al+1$. Then $\tilde m=m$, where $\tilde m=\inf_{u\in\tilde N_0}S(u)$ and 
		\[
		\tilde N_0=\{u\in X^s\setminus\{0\},\;P(u)=0 \}.
		\]
	\end{lemma}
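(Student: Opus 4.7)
The easy inequality $\tilde m \geq m$ follows from the embedding $X^s \hookrightarrow X_\al$ for $s \geq \al + 1$: if $u \in X^s$, then $u \in H^s$ controls $\|u\|_{L^2}$ and $\|\dx u\|_{L^2}$, while $(\xi^{-1}\what u)^\vee \in H^s$ with $s \geq 1$ controls $\|\nd u_y\|_{L^2}$. Therefore $\tilde N_0 \subset N_0$ and $\tilde m \geq m$.

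For the reverse inequality $\tilde m \leq m$, the plan is to show that any minimizer attaining $m$ already lies in $X^s$. By Theorem \ref{theorem-exist} there exists $u \in X_\al$ with $P(u) = 0$ and $S(u) = m$, and by Theorem \ref{exist-th} this $u$ is a weak solution of \eqref{gkp}:
\[
(1 + D_x^{2\al})u + \partial_x^{-2} u_{yy} = \mu_1 |u|^{p_1-1} u + \mu_2 |u|^{p_2-1} u.
\]
A regularity bootstrap then promotes $u$ to $X^s$ for every $s \geq \al + 1$. Schematically, the Fourier symbol of the left-hand operator, $1 + |\xi|^{2\al} + \eta^2/\xi^2$, behaves like the anisotropic weight underlying the $X^s$-norm, so inverting it transfers regularity from the right-hand side to $u$; the right-hand side is a superlinear polynomial in $u$, controlled via the embedding \eqref{embed} together with Moser-type product estimates. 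Iterating yields $u \in X^s$, so $u \in \tilde N_0$ and $\tilde m \leq S(u) = m$.

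The main technical obstacle is justifying this bootstrap carefully, since the symbol degenerates at $\xi = 0$ and one must treat the $H^s$ component and the $\partial_x^{-1}$-shifted $H^s$ component of the $X^s$-norm separately (parallel to the commutator estimates invoked in Remark \ref{remark-0}). An alternative, more elementary plan avoiding regularity theory is a density-plus-rescaling argument: since $X_\al$ is the closure of $\partial_x C_0^\infty(\rr^2)$ and $\partial_x C_0^\infty(\rr^2) \subset X^s$, pick $v_n \in \partial_x C_0^\infty(\rr^2)$ with $v_n \to u$ in $X_\al$, and rescale $w_n(x,y) = v_n(x, C_n y)$ with $C_n > 0$ chosen so that $P(w_n) = 0$. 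A direct computation shows $P(v_C) = a C^{-1} + b C$ with $b = \|\nd u_y\|_{L^2}^2 > 0$ for any nontrivial ground state, which forces $C_n \to 1$ as $v_n \to u$; hence $w_n \to u$ in $X_\al$, $S(w_n) \to S(u) = m$, and $w_n \in \tilde N_0$ gives $\tilde m \leq m$.
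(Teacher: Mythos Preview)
Your second plan (density plus rescaling) is essentially the paper's argument, with one difference in the rescaling step. The paper approximates an arbitrary $u^\ast\in N_0$ by $u_n\in X^s$ (using density of $X^s$ in $X_\al$) and then corrects the constraint by \emph{scalar multiplication}: it finds $\nu_n\to 1$ with $P(\nu_n u_n)=0$, so that $S(\nu_n u_n)\to S(u^\ast)$ and hence $\tilde m\leq m$. You instead correct by a $y$-dilation $w_n(x,y)=v_n(x,C_n y)$. Your choice has the advantage that $P(v_C)=aC^{-1}+bC$ is explicitly solvable for $C$, whereas $P(\nu v)$ is a non-homogeneous polynomial in $\nu$ whose root near $1$ requires a continuity argument the paper leaves to the reader. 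Both mechanisms work and the overall structure is the same.

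Two small points. First, your regularity bootstrap (first plan) is not what the paper does and is not needed; it is a legitimate alternative in principle, but you correctly flag the degeneracy at $\xi=0$ as an obstacle, and the density route avoids it entirely. Second, in your $y$-dilation computation the coefficients $a,b$ belong to $v_n$, not to the ground state $u$; what you actually use is $a_n\to -\|\nd u_y\|_\lt^2<0$ and $b_n\to\|\nd u_y\|_\lt^2>0$, whence $C_n=\sqrt{-a_n/b_n}\to 1$. The positivity $\|\nd u_y\|_\lt^2>0$ holds because a nontrivial $L^2(\rr^2)$ function cannot be independent of $y$.
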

	\begin{proof}
		Clearly $\tilde m\geq m$. To see the converse, it suffices to show that for any $\epsilon>0$ and $u^\ast\in N_0$, there holds
		\[
		S(u^\ast)\geq \inf_{u\in\tilde N_0}S(u)-\epsilon.
		\]
		Since $X^s$ is dense in $\x$, we find a sequence $\{u_n\}\subset X^s$ such that $u^\ast=u_n+w_n$ such that $w_n\to0$ in $\x$ as $n\to\infty$. Then we obtain from $u^\ast\not\equiv0$  and $u^\ast\in N_0$  that $\lim_{n\to\infty}P(u_n)=0$ and 
		\[
		\liminf_{n\to\infty}\|u\|_{L^q(\rt)}^q\neq0
		\]
		for any $q\leq 2^\ast$. It is easy to check that there exists a sequence $\{\nu_n\}\subset\rr$ such that $\nu_n u_n\in\tilde N_0$ and $\nu_n\to1$ as $n\to\infty$. Denoting $u^\ast=\nu_nu_n+(1-\nu_n)u_n+w_n$, we have from $1-\nu_n\to0$ and $w_n\to0$ in $\x$ as $n\to\infty$ that
		\[
		\frac12\|u^\ast\|_\x^2\geq\frac12\|\nu_nu_n\|\x^2-\frac{\epsilon}{3}
		\]
		and
		\[
		K(u^\ast)\leq K(\nu_nu_n)+\frac{\epsilon}{3}.
		\]
		On the other hand, the fact $\|w_n\|_{\dot X_\al}\to0$ as $n\to\infty$ implies that $S(u^\ast)\geq S(\nu_nu_n)-\epsilon\geq\tilde m-\epsilon$.
	\end{proof}
	\begin{lemma}\label{initial}
		Let $u,w\in\x$ be fixed and $\|w\|_\x\leq C$. Then there exists positive numbers $C_j$ with $j=1,2,3$, independent of $w$, such that
		\[
		\begin{split}
			&	S(u+w)<S(u)+C_1\|w\|_\x,\\
			&	\rrr(u)-C_3\|w\|_\x<\rrr(u+w)<\rrr(u)+C_2\|w\|_\x.
		\end{split}
		\]
	\end{lemma}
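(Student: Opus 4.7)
The plan is to establish that both $S$ and $\rrr$ are Lipschitz on bounded balls in $\x$ by decomposing each functional into its quadratic and nonlinear parts, controlling the former by bilinearity and Cauchy--Schwarz, and the latter via the fundamental theorem of calculus together with H\"older's inequality and the continuous embedding $\x \hookrightarrow L^{p_j+1}(\rt)$ supplied by \eqref{embed}.

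For the first inequality, I would expand $S(u+w) - S(u) = I(u+w) - I(u) - (K(u+w)-K(u))$. Since $I$ is a sum of squared Hilbert norms, the bilinear expansion gives
\[
I(u+w) - I(u) = \la u,w\ra_I + \tfrac{1}{2}\|w\|_I^2,
\]
where $\la\cdot,\cdot\ra_I$ denotes the inner product inducing $I$. For $u$ fixed and $\|w\|_\x \leq C$, Cauchy--Schwarz then yields $|I(u+w)-I(u)| \leq (\|u\|_\x + \tfrac{1}{2}C)\|w\|_\x$. For each nonlinear piece $K_j(v) = \int_\rt F_j(v)\dz$, the fundamental theorem of calculus gives
\[
K_j(u+w) - K_j(u) = \int_0^1 \!\!\int_\rt f_j(u+tw)\,w\,\dz\,\dd t,
\]
and since $|f_j(v)| = |v|^{p_j}$, H\"older's inequality with conjugate exponents $(p_j+1)/p_j$ and $p_j+1$ combined with \eqref{embed} produces
\[
|K_j(u+w) - K_j(u)| \leq \int_0^1 \|u+tw\|_{L^{p_j+1}(\rt)}^{p_j}\,\|w\|_{L^{p_j+1}(\rt)}\,\dd t \leq \tilde{C}_j(\|u\|_\x+C)^{p_j}\|w\|_\x.
\]
Summing the two estimates with the coefficients $\mu_1, \mu_2$ yields the first inequality, with a constant $C_1$ depending only on $u$ and $C$.

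The argument for $\rrr$ proceeds identically. The two quadratic pieces $\al b\|\dx u\|_\lt^2$ and $(d-b)\|\nd u_y\|_\lt^2$ admit analogous bilinear expansions bounded by $C\|w\|_\x$, while the nonlinear integrals involving $F_j$ are controlled by the same H\"older and embedding scheme. This furnishes $|\rrr(u+w) - \rrr(u)| \leq C'\|w\|_\x$, which gives the two-sided estimate with $C_2 = C_3 = C'$.

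There is no substantial obstacle here: the result is a straightforward continuity/Lipschitz statement for smooth functionals on bounded subsets of $\x$, which reduces to routine H\"older estimates together with the Sobolev-type embedding \eqref{embed}. The only mild point to verify is that the powers $p_j$ satisfy $c_{p_j}>0$ (equivalently $p_j+1<2^\ast$), so that the embedding applies and all constants can be chosen to depend only on $u$ and the radius $C$, independently of the particular $w$ in the ball; this subcriticality is in force throughout the setting in which the lemma is used.
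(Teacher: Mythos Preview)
Your argument is correct and is the standard route: expand the quadratic parts bilinearly and control the nonlinear integrals via the fundamental theorem of calculus, H\"older, and the embedding \eqref{embed}. The paper itself does not give a proof of this lemma---it simply states that the proof is similar to Lemma~4.7 of \cite{nonlinearity} and omits the details---so there is no in-paper argument to compare against; your write-up supplies exactly the kind of direct verification one would expect, with the only caveat (which you already flag) that the subcriticality condition $p_j+1\leq 2^\ast$ is needed for the embedding step.
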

	\begin{proof}
		The proof is similar to Lemma 4.7 in \cite{nonlinearity}, so we omit the details.
	\end{proof}
	
	Now we are in the position to show the strong instability of solitary waves.
	
	\begin{theorem}\label{instability-theorem}
		Let $\mu_1>0$ and $\ff$ be a solitary wave solution.  Suppose that  $p_1\geq p_2$ and $p_1>s_c$	if $\moo<0$ and
		$ 	p_1\geq p_2>s_c	$ 	when $\mu_2>0$.
		Then	  for any $\delta>0$, there exists $u_0\in X_s$ ($s\geq\al+1$) with $\|u_0-\ff\|_{{X}}<\delta$, such that the solution $u(t)$ of \eqref{main-fifthkp} with initial data $u(0)=u_0$ satisfies $$\displaystyle\lim_{t\to T^-}\|u(t)\|_\x=+\infty$$ for some $T>0$.
	\end{theorem}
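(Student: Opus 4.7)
The plan is to produce, for every $\delta>0$, an initial datum $u_0\in X^s$ with $\|u_0-\ff\|_X<\delta$ and $u_0\in \Lambda_{b,d}^-\cap\Lambda_{b,d-1}^+$ for a suitably chosen pair $(b,d)$; Theorem \ref{thm-improved-blow-up} then yields $\lim_{t\to T^-}\|u_y(t)\|_\lt=+\infty$, which by definition of the $X_\alpha$-norm forces $\lim_{t\to T^-}\|u(t)\|_\x=+\infty$. Under the hypothesis $p_1>s_c=1+4\alpha/(\alpha+2)$ (together with $p_2>s_c$ when $\mu_2>0$), a limiting computation shows that one can fix $b$ large with $d/b$ close to $1+\alpha$ but $d\neq b(1+\alpha)$ and $d\geq b+1$, so that
\[
p_j>4\max\left\{\frac{d-b}{b+d},\;\frac{\alpha b}{b+d-1}\right\}+1
\]
holds for the relevant indices $j\in\{1,2\}$; thus Theorem \ref{thm-improved-blow-up} applies, while the strict inequality $d\neq b(1+\alpha)$ is reserved for a transversality check below.

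Introduce the two-parameter family
\[
u^{\lambda,\rho}(x,y)=\lambda\,\rho^{(b+d)/2}\,\ff(\rho^b x,\rho^d y),\qquad (\lambda,\rho)\text{ near }(1,1),
\]
with $u^{1,1}=\ff$. Because $\ff$ is a critical point of $S$, identity \eqref{id} together with $\langle S'(\ff),\ff\rangle=P(\ff)=0$ implies that both $R_{b,d}(u^{\lambda,\rho})$, $R_{b,d-1}(u^{\lambda,\rho})$ and the partial derivatives of $S(u^{\lambda,\rho})$ vanish at $(1,1)$. The Pohozaev identities at $\ff$ read $\alpha\|\dx\ff\|_\lt^2=2\|\nd\ff_y\|_\lt^2=2[\ps_1\mu_1 K_1(\ff)+\ps_2\mu_2 K_2(\ff)]$. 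A direct computation then gives that the Jacobian determinant of $(\lambda,\rho)\mapsto(R_{b,d}(u^{\lambda,\rho}),R_{b,d-1}(u^{\lambda,\rho}))$ at $(1,1)$ equals
\[
8\,b\,[\,d-b(1+\alpha)\,]\,\|\nd\ff_y\|_\lt^2\,\left[\ps_1^2\mu_1 K_1(\ff)+\ps_2^2\mu_2 K_2(\ff)\right],
\]
which is nonzero: the first three factors by our choice of $(b,d)$, and the last bracket because, writing $a=\ps_1\mu_1 K_1(\ff)>0$ and $c=\ps_2\mu_2 K_2(\ff)$ with $a+c=\|\nd\ff_y\|_\lt^2>0$, one has $\ps_1 a+\ps_2 c=\ps_2(a+c)+(\ps_1-\ps_2)a>0$ thanks to $\mu_1>0$ and $p_1\geq p_2$. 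The inverse function theorem therefore furnishes $(\lambda,\rho)$ arbitrarily close to $(1,1)$ with $R_{b,d}(u^{\lambda,\rho})<0$ and $R_{b,d-1}(u^{\lambda,\rho})>0$. Moreover, $\partial_\lambda^2 S(u^{\lambda,\rho})|_{(1,1)}=-2(p_1+1)\ps_1\mu_1 K_1(\ff)-2(p_2+1)\ps_2\mu_2 K_2(\ff)$ is strictly negative by the same convex combination argument, so the selection can be arranged to also satisfy $S(u^{\lambda,\rho})<m$.

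Finally, Theorem \ref{cauchy-problem} requires $u_0\in X^s$ with $s\geq\alpha+1$. Using the density argument of Lemma \ref{euqiv-2} together with the $X_\alpha$-continuity of $S$, $R_{b,d}$ and $R_{b,d-1}$ provided by Lemma \ref{initial}, we approximate $u^{\lambda,\rho}$ in the $X_\alpha$-norm by a smooth $u_0\in X^s$; choosing the error small enough preserves $S(u_0)<m$, $R_{b,d}(u_0)<0$, $R_{b,d-1}(u_0)>0$ and keeps $\|u_0-\ff\|_X<\delta$. Hence $u_0\in\Lambda_{b,d}^-\cap\Lambda_{b,d-1}^+$, and Theorem \ref{thm-improved-blow-up} yields the desired finite-time blow-up, establishing the strong instability. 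The main obstacle is the non-degeneracy check: the Pohozaev identities pin down only the first moment $\ps_1\mu_1 K_1(\ff)+\ps_2\mu_2 K_2(\ff)$, so positivity of the higher moment $\ps_1^2\mu_1 K_1(\ff)+\ps_2^2\mu_2 K_2(\ff)$ (relevant when $\mu_2<0$) must be extracted from $\mu_1>0$ and $p_1\geq p_2$; combined with the geometric constraint $d\neq b(1+\alpha)$ in the limit $b\to\infty,\ d/b\to1+\alpha$ that realizes the sharp threshold $p_j>s_c$, this is exactly what makes the transversality argument succeed.
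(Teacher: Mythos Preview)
Your route via the inverse function theorem on $(\lambda,\rho)\mapsto(R_{b,d}(u^{\lambda,\rho}),R_{b,d-1}(u^{\lambda,\rho}))$ is genuinely different from the paper's. The paper sets $d=(1+\alpha)b$ \emph{exactly} and works with the scaling $w=\sqrt{BD}\,\ff(Bx,Dy)$: from the Pohozaev identities it computes $S(w)$ and $R_{b,d}(w)$ explicitly, shows that $S(w)<m$ and $R_{b,d}(w)<0$ simultaneously for $BD>1$ close to $1$, and then exploits the scale separation that $R_{b,d}(w)$ is proportional to $b$ while $R_{b,d}(w)-R_{b,d-1}(w)=\|\nd w_y\|^2_{L^2}-\tilde K(w)$ is $b$-independent, so a suitable choice of $b$ forces $R_{b,d-1}(w)>0$. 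Your choice $d\neq(1+\alpha)b$ trades this scale separation for a transversality argument.

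Your Jacobian formula $\det J=8b\,[d-b(1+\alpha)]\,\|\nd\ff_y\|^2_{L^2}\,[\ps_1^2\mu_1 K_1(\ff)+\ps_2^2\mu_2 K_2(\ff)]$ checks out, as does the positivity of the bracket under $\mu_1>0$, $p_1\geq p_2$. The genuine gap is the inference ``$\partial_\lambda^2 S<0$, so the selection can be arranged to also satisfy $S<m$''. The inverse function theorem fixes $(\lambda,\rho)$ once $(R_{b,d},R_{b,d-1})$ is prescribed, so one must \emph{verify} $S<m$ on the relevant wedge, and $\partial_\lambda^2 S<0$ is not the right information: since
\[
\partial_\lambda R_{b,d}\big|_{(1,1)}=-2(b+d)\Sigma,\qquad \partial_\lambda R_{b,d-1}\big|_{(1,1)}=-2(b+d-1)\Sigma
\]
have the \emph{same} sign, the $\lambda$-direction is mapped by $J$ into the quadrant $\{R_{b,d}<0,\ R_{b,d-1}<0\}$ (or its opposite), never into the target wedge $\{R_{b,d}<0<R_{b,d-1}\}$. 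Thus the fact that $S$ decreases along the $\lambda$-axis says nothing about $S$ on the cone $J^{-1}(\{R_{b,d}<0<R_{b,d-1}\})$ you actually need. To close this you would have to either compute the full Hessian of $(\lambda,\rho)\mapsto S(u^{\lambda,\rho})$ at $(1,1)$ and check it is negative on that cone (which is not automatic---the Hessian can be indefinite, and its $(\rho,\rho)$-entry equals $\partial_\rho R_{b,d}|_{(1,1)}$, whose sign depends on $b,d,\alpha$ and the supercriticality margin), or add an argument using the identity $\partial_\rho S(u^{\lambda,\rho})=\rho^{-1}R_{b,d}(u^{\lambda,\rho})$ to monotonically decrease $S$ while remaining in the good region. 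As written, this step does not close; the paper's explicit computation with $d=(1+\alpha)b$ sidesteps precisely this difficulty by securing $S(w)<m$ \emph{before} the parameter $b$ is used to split the signs of $R_{b,d}$ and $R_{b,d-1}$.
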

	
	\begin{proof}
		The proof is based on the application of Theorem \ref{thm-improved-blow-up}. For $B,D>0$ we set $w(x,y)=\sqrt{BD}\ff(Bx,Dy)$, and
		\[
		r_j=4+\frac{p_j+3}{2}(\al-2),\quad j=1,2.
		\]
		Then, we have from the Pohojaev identities
		\begin{equation}\label{poho-b}
			\begin{split}
				\al\|\dx\ff\|_\lt^2&=2\|\nd\ff_y\|_\lt^2,\\
				\|\nd\ff_y\|_\lt^2&=\ps_1K_1(\ff)+\ps_2K_2(\ff) ,\\
				\ps_1\|\ff\|_\lt^2&=\frac{r_1}{2}\|\dx\ff\|_\lt^2-(\poo-\po)K_2(\ff),\\
				\ps_2\|\ff\|_\lt^2&=\frac{r_2}{2}\|\dx\ff\|_\lt^2-(\po-\poo)K_1(\ff)
			\end{split}
		\end{equation}
		that
		\[
		S(\ff)= \left(\frac{r_j}{2\al\ps_j}+\frac12+\frac1\al-\frac{1}{\ps_j}\right)   \|\nd\ff_y\|_\lt^2.
		\]
		Moreover, we consider $d=(1+\al)b$ to observe that
		\[
		\begin{split}
			\frac{1}{b}\rrr(w)&=\al\left(B^{2\al}+\frac{\al D^2}{2B^2}-\frac{\ps_j(\al+2)(BD)^{\ps_j}}{2}\right)\|\dx\ff\|_\lt^2\\
			&\quad-(\al+2)(-1)^j\left(\ps_2(BD)^{\ps_2}-\ps_1(BD)^{\ps_1}\right)K_j(\ff)
		\end{split}
		\]
		and
		\[
		\begin{split}
			S(w)&=\frac{\al}{2}\left(\frac{r_j}{2\al\ps_j}+\frac{B^{2\al}}{\al}+\frac{D^2}{2B^2}-\frac{(BD)^{\ps_j}}{\ps_j}\right)\|\dx\ff\|_\lt^2\\
			&\quad-(\al+2)\left(\frac{(-1)^j(\poo-\po)}{2\ps_j}+(BD)^{\ps_2}-\frac{\ps_2}{\ps_1}(BD)^{\ps_1}\right)K_j(\ff) 
		\end{split}
		\]
		for $j=1,2$. Now, if $\mu_2<0$, then by using $j=1$ in the above equations and choosing $BD>1$ and near to 1, we find after some calculations for $b>(\al+2)^{-1}$ that $\rrr(w)<0$ and $S(w)<m$
		provided
		\[
		p_1>1+\frac{4\al b}{(\al+2)b-1}
		\]
		and $p_2\leq p_1$. In the case $\mu_2>0$, we get the similar inequalities if 
		\[
		p_1\geq p_2>1+\frac{4\al b}{(\al+2)b-1}.
		\]
		We note from \eqref{virial-m} and $\rrr(w)<0$ that  $R_{b,d-1}(w)>0$ if we choose $b>0$ sufficiently large. Now, by choosing for $\delta>0$ the above function $w$ such that $\|w-\ff\|_\x<\delta/2$, we can find from the density $X^s\hookrightarrow\x$ the initial data $u_0\in X^s$ such that 
		\[
		\|u_0-w\|_\x<\min\{\delta/2,\frac{\delta_1}{C_1+C_2+C_3}\},
		\]
		where $C_j$ with $j=1,2,3$ are as in Lemma \ref{initial} and $\delta_1=\min\{m-S(w),-\rrr(w),R_{b,d-1}(w),\delta/2\}$. Therefore, 
		\[
		\|\ff-u_0\|_\x\leq\|u_0-w\|_\x+\|w-\ff\|_\x<\delta,
		\]
		so that $u_0\in\Lambda_{b,d}^-\cap\Lambda_{b,d-1}^+$.
		Theorem \ref{thm-improved-blow-up} shows that the solution $u(t)$ associated with the initial data $u_0$ blows up in a finite time.
	\end{proof}

	\section{Numerical Results}\label{sect-numerical-s}
	In this section, we introduce a numerical approach that combines the Fourier pseudo-spectral method with the integration factor method and the Runge-Kutta method to solve the generalized KP equation, specifically with $\alpha=1$. This technique has proven successful in solving the Korteweg-de Vries (KdV) equation as demonstrated in \cite{trefethen}, and the Kadomtsev-Petviashvili (KP) equation as shown in \cite{klein-roidot, ksm}. An advantage of employing the integration factor method is its capability to alleviate the impact of the strong stiff linear term, thus enabling the use of larger time steps.
	
	It is worth noting that for the case $\alpha=1$, the equation \eqref{main-fifthkp} can be represented in the following form:
	\begin{equation}\label{kp-evolution}
		u_t + u_{xxx} + (\mu_1 u^{p_1} + \mu_2 u^{p_2})_x + \varepsilon ~ \partial_x^{-1} u{yy} = 0,
	\end{equation}
	where the antiderivative $\partial_x^{-1}$ is uniquely defined by
	\begin{equation}
		\partial_x^{-1} u(x) = \frac{1}{2} \left( \int_{-\infty}^x u(s,y) \dd s - \int_{x}^{\infty} u(s,y)\dd s \right).
	\end{equation}
	We choose the initial data in the Schwartz class $\mathcal{S}(\mathbb{R}^2)$  of rapidly decreasing smooth functions and we assume periodic boundary conditions. If the  Fourier transform is applied to the equation \eqref{kp-evolution},
	we obtain 
	\begin{equation} \label{Fouriertransformedevolution}
		\hat{u}_t-\ii  \left( k_x^3 -\varepsilon \frac{ k_y^2}{k_x} \right)\hat{u} +\ii  k_x (\mu_1  \widehat{u^{p_1}}+\mu_2  \widehat{u^{p_2}})=0,
	\end{equation}
	where  $k_x$ and $k_y$ are dual variables to $x$ and $y$ and $\partial_x^{-1}$ is defined  via its Fourier multiplier $ -i/k_x$.
	Here, the Fourier transform of $u(t,x,y)$ is denoted by $\hat{u}$  instead of $\hat{u}(t,k_x,k_y)$ for simplicity.
	Now, we multiply the equation \eqref{Fouriertransformedevolution} by the integration factor $\ee^{-\ii  \left( k_x^3 -\varepsilon \frac{ k_y^2}{k_x} \right)t}$, one gets 
	\begin{equation} \label{ode1}
		\hat{U}_t + \ii  k_x \ee^{-\ii   \left( k_x^3 -\varepsilon \frac{ k_y^2}{k_x} \right)t}(\mu_1  \widehat{u^{p_1}}+\mu_2  \widehat{u^{p_2}})=0,
	\end{equation}
	where 
	\begin{equation}  \label{ode2}
		\hat{U}= e^{-\ii   \left( k_x^3 -\varepsilon \frac{ k_y^2}{k_x} \right)t} \hat{u}.
	\end{equation}
	In order to avoid the division by zero for $k_x=0$ in the equations \eqref{ode1} and  \eqref{ode2}, we add 
	a small imaginary part to $k_x$ considering the sign of $\varepsilon$.  We use the smallest floating point number such that MATLAB 
	represent as \mbox{$\lambda=2.2 \times 10^{-16}$}. Therefore, we replace $ \displaystyle\frac{1}{k_x}$ by $\displaystyle\frac{1}{k_x+ \ii \varepsilon \lambda}$.
	We use the fourth-order explicit Runge–Kutta method to solve the resulting ODE \eqref{ode1} in time. Finally, we find the approximate solution by using the inverse Fourier transform. 
	
	\par Application of the numerical method requires truncation of the  $xy$-plane  to a
	finite rectangular region  $[-L_x, L_x]\times [-L_y, L_y]$. We approximate the Fourier coefficients by
	discrete Fourier transform which is efficiently computed
	with a fast Fourier transform (FFT).   In order to evaluate the discrete Fourier transform and its inverse, we use the MATLAB functions “fft2” and “ifft2,” respectively.  We use $N_x$ and $N_y$  collocation points in $x$ and $y$, respectively. We assume $N_x$ and $N_y$ are even, positive integers. 
	The time interval $[0, T]$ is divided into $M$ equal
	subintervals with grid spacing $\Delta t=T/M$. The temporal grid points are given by
	$t_{m}=m \Delta t$,  $m=0,1,2,...,M$. The approximate values of $u(x_{j}, y_{k}, t_{m})$,
	is denoted by $U_{j,k}^{m}$.  In order to check the accuracy of our code, we force the  mass conservation error 
	\begin{equation}
		\Delta= \left| 1 -\frac{M(u(t))}{M(u(0))} \right|
	\end{equation}
	is less than $10^{-4}$ at each time step, where  the  the mass integral \eqref{mass}
	is approximated by the trapezoidal rule. 
	\subsection{Numerical experiments}
	In this section, we first test the numerical accuracy of the method through the following example. The scheme is used to solve the Gardner equation given by
	\begin{equation}\label{gardner2}
		u_t+u_{xxx}+u u_x+ \frac{1}{12} u^2 u_x=0.
	\end{equation}
	The exact solution initially centered $x_0=0$  of the equation \eqref{gardner2} is  given by 
	\begin{equation}
		u(x,t)=\frac{6A}{1+\sqrt{1+\frac{A}{2}} \cosh \left(\sqrt{A} (x-At)\right)},
	\end{equation}
	where $A$ is arbitrary in \cite{hamdi}. Here, we set $A=1$. The computations are performed on the rectangular 
	region $[-60, 60] \times [-30, 30]$ for times up to $T = 10$. We
	choose the number of spatial grid points $N_x = 2^{10}, N_y=2^9$, and the number of temporal points $M=10000$. 
	Figure \ref{figure_gardner} shows the variation of the 
	$L_{\infty}$ error between the numerical and exact solution  (left panel) and the difference of mass $\Delta$ for the numerical solution (right panel).
	It shows that our proposed method is capable of high accuracy.
	\begin{figure}[!h]
		\begin{minipage}[t]{0.45\linewidth}
			\centering
			\includegraphics[height=5.5cm,width=7.5cm]{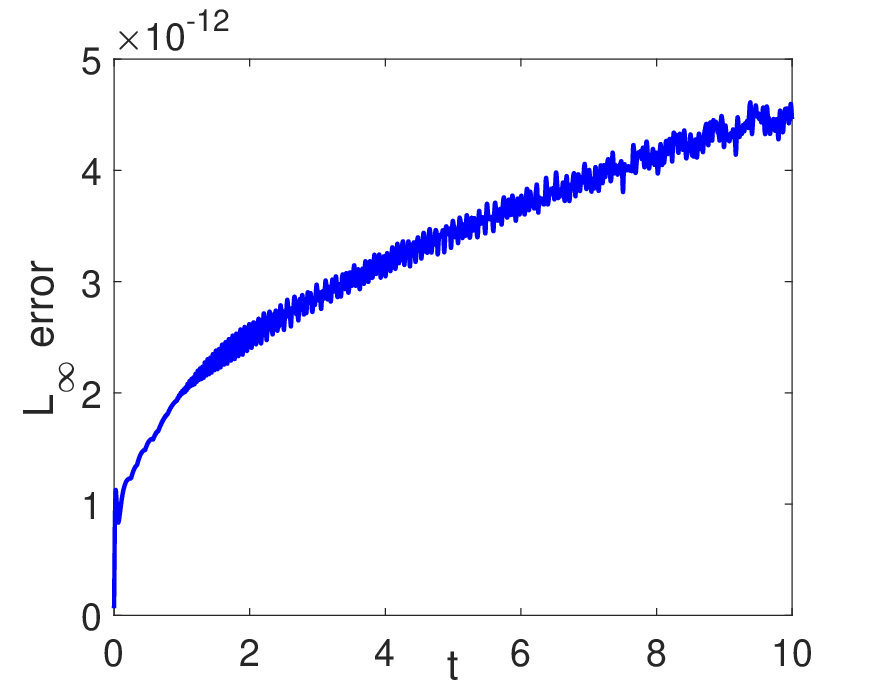}
		\end{minipage}%
		\hspace{20pt}
		\begin{minipage}[t]{0.45\linewidth}
			\centering
			\includegraphics[height=5.5cm,width=7.5cm]{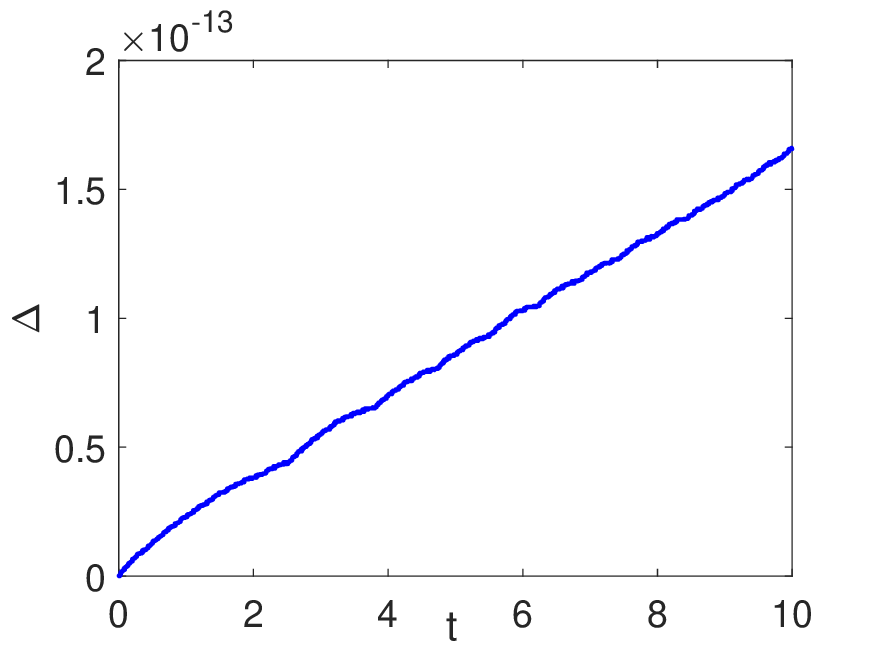}
		\end{minipage}
		\caption{Long-time errors of the numerical method (left panel) and mass conservation error (right panel) }\label{figure_gardner}
	\end{figure}

	\par
	The conditions needed for uniform boundedness or blow-up of the solutions for \eqref{main-fifthkp} have been discussed theoretically in Section $4$ but there remains a gap. Our aim is to remove the gap and to study the qualitative behavior of the solutions for the generalized KP equation with supercritical, subcritical, and critical nonlinearities.

	\subsection{The generalized KP equation with double supercritical nonlinearities }
	
	In this subsection,   we test our scheme for the generalized KP equation with double supercritical nonlinearities given by \mbox{$f(u)=u^5+u^6$} with $\mu_1=1, \mu_2=1$. The problem is solved on the rectangular region $[-5\pi, 5\pi]\times [-2\pi, 2\pi]$
	using ${N_x}=2^{12}$ and ${N_y}=2^{14}$.  We consider the Gaussian initial condition
	\begin{equation}
		u_0(x,y)=-4\left(\ee^ {-({x^2}+{y^2})}\right)_{xx} \label{initialu5u6}
	\end{equation}
	satisfying $E(u_0)=-88048<0$.  The program is stopped at $t=1.7188\times 10^{-6}$ since the quantity, measuring numerical mass conservation, becomes larger than $10^{-4}$.   The accuracy of the approximation in space is also controlled by the Fourier coefficients.
	Figure \ref{u5u6} shows the profile of  $ u $,   $ u_x $ and $ u_y $   and  Fourier coefficients  at the critical time \mbox{$t=1.7188\times 10^{-6}$} for the nonlinearity $f(u)=u^5+u^6$. As is seen from the figure, the gradient of $u$  at the critical time diverges more rapidly than the solution itself.
	The numerical result indicates that the solution blows up in a finite time. This numerical result is in complete agreement with the analytical result given in Theorem \ref{blow-up-theorem-1}, part (i).
	\begin{figure}[!htbp]
		\begin{minipage}[t]{0.45\linewidth}
			\centering
			\includegraphics[height=5.5cm,width=7.5cm]{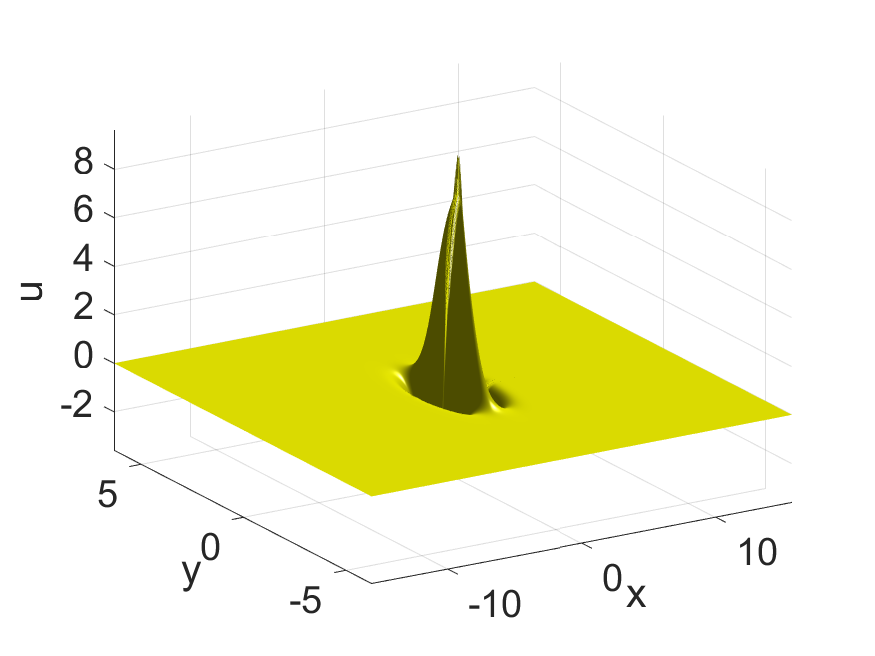}
		\end{minipage}%
		\hspace{20pt}
		\begin{minipage}[t]{0.45\linewidth}
			\centering
			\includegraphics[height=5.5cm,width=7.5cm]{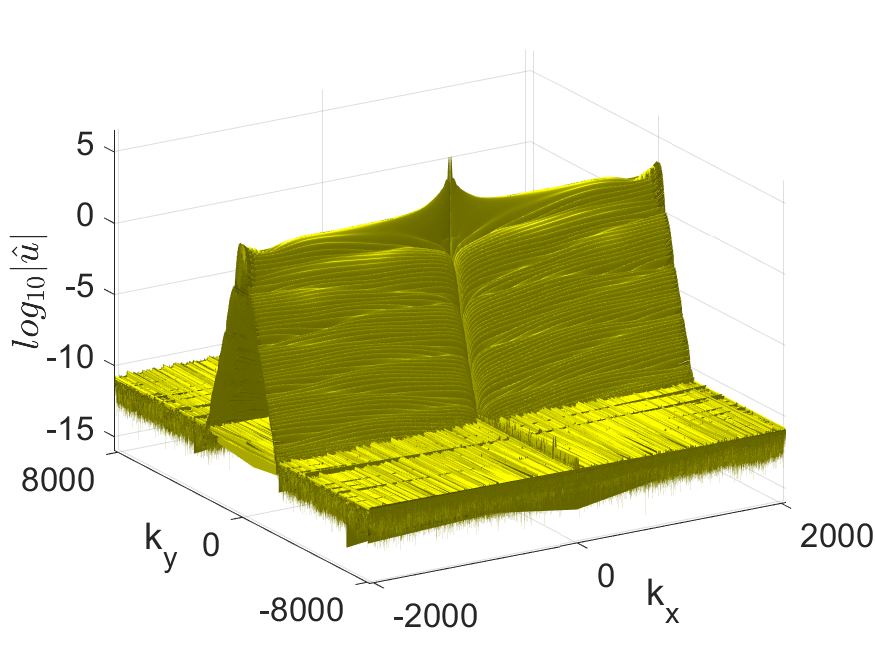}
		\end{minipage}
		\begin{minipage}[t]{0.45\linewidth}
			\centering
			\includegraphics[height=5.5cm,width=7.5cm]{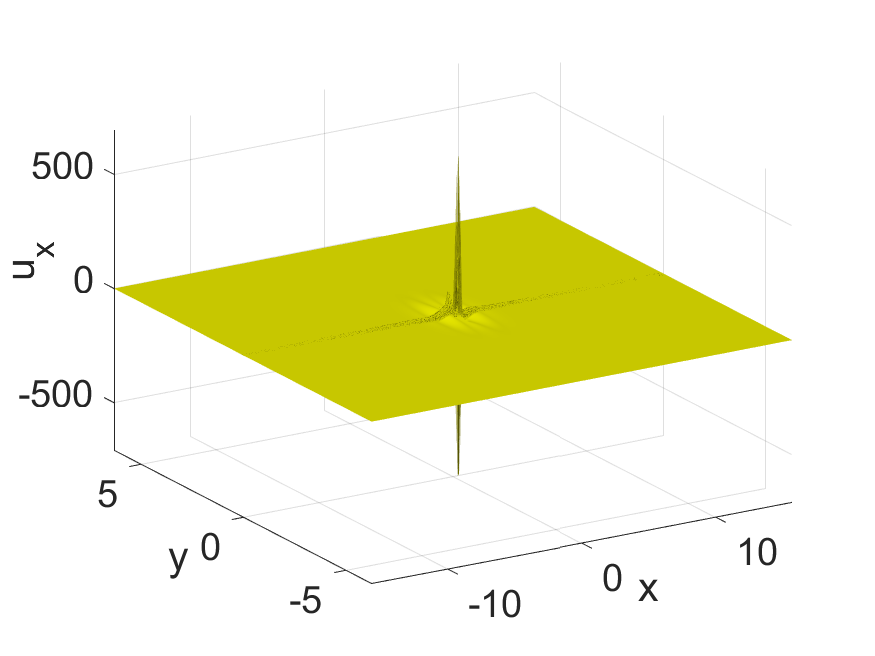}
		\end{minipage}%
		\hspace{40pt}
		\begin{minipage}[t]{0.45\linewidth}
			\centering
			\includegraphics[height=5.5cm,width=7.5cm]{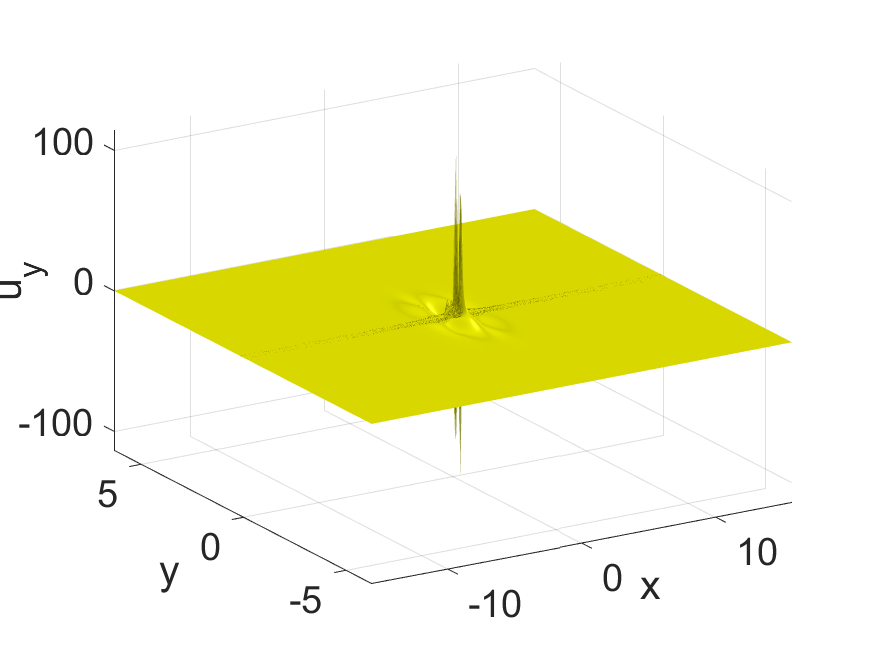}
		\end{minipage}
		\caption{ The profile of  $ u $,   $ u_x $ and $ u_y $  (top left, bottom left, bottom right)  and  Fourier coefficients at $t=1.7188\times 10^{-6}$ (top right) for the nonlinearity $f(u)=u^5+u^6$.}\label{u5u6}
	\end{figure}
	
	\subsection{The   double subcritical nonlinearities }
	For the generalized KP equation  with  double subcritical nonlinearities $f(u)=u^2+u^{3/2}$, the computations are carried out on the larger  rectangular region 
	$[-25\pi, 25\pi]\times [-4\pi, 4\pi]$ using ${N_x}=2^{14}$ and ${N_y}=2^{12}$ by taking the initial condition 
	\begin{equation}
		u_0(x,y)=  \left(\ee^ {-({x^2}+{y^2})}\right)_{xx} . \label{initial-u2u32}
	\end{equation}
	The variation of  $||u||_\infty$ with time and the profile of the numerical solution at  $t=0.5$  are illustrated in Figure \ref{u2u32}. As is seen from the 
	Figure,    $||u||_\infty$ decreases as time increases.   
	Figure \ref{u2u32focus} gives a closer look at the profile. We observe the decreasing oscillations and tails at $t=0.5$. The profile of the KP I equation with
	single quadratic nonlinearity at $t=0.15$ is presented in Figure $30$ of \cite{klein-saut}. The behavior of the solution for the single quadratic
	nonlinearity  $f(u)=u^2/2$  is very similar to the behavior of the generalized KP equation with the nonlinearity $f(u)=u^2+u^{3/2}$.
	There is no indication of a blow-up.  The numerical result is compatible with the Theorem \ref{subcritical-th}.
	
	\begin{figure}
		\begin{minipage}[t]{0.45\linewidth}
			\centering
			\includegraphics[height=5.5cm,width=7.5cm]{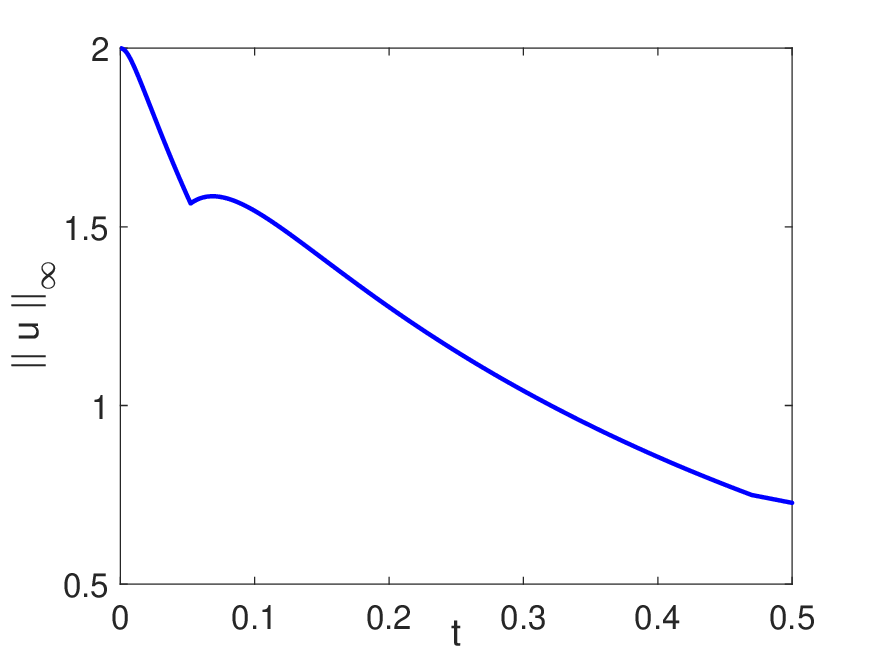}
		\end{minipage}%
		\hspace{40pt}
		\begin{minipage}[t]{0.45\linewidth}
			\centering
			\includegraphics[height=5.5cm,width=7.5cm]{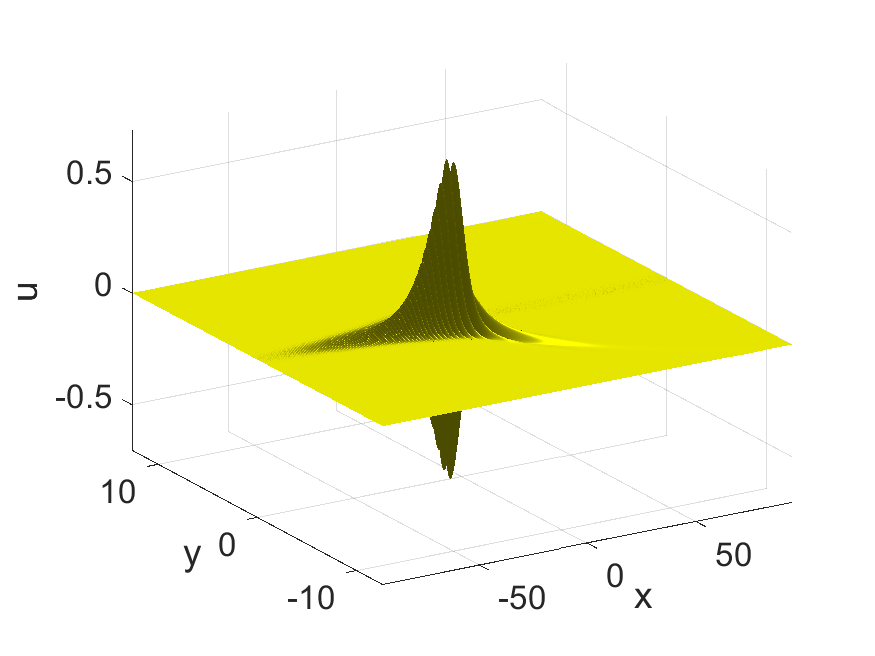}
		\end{minipage}
		\caption{  The variation of  $||u||_\infty$ with time  and  the profile of the numerical solution at  $t=0.5$ for the nonlinearity $f(u)=u^2+u^{3/2}$. }\label{u2u32}
	\end{figure}
	\begin{figure}
		\centering
		\includegraphics[height=5.5cm,width=7.5cm]{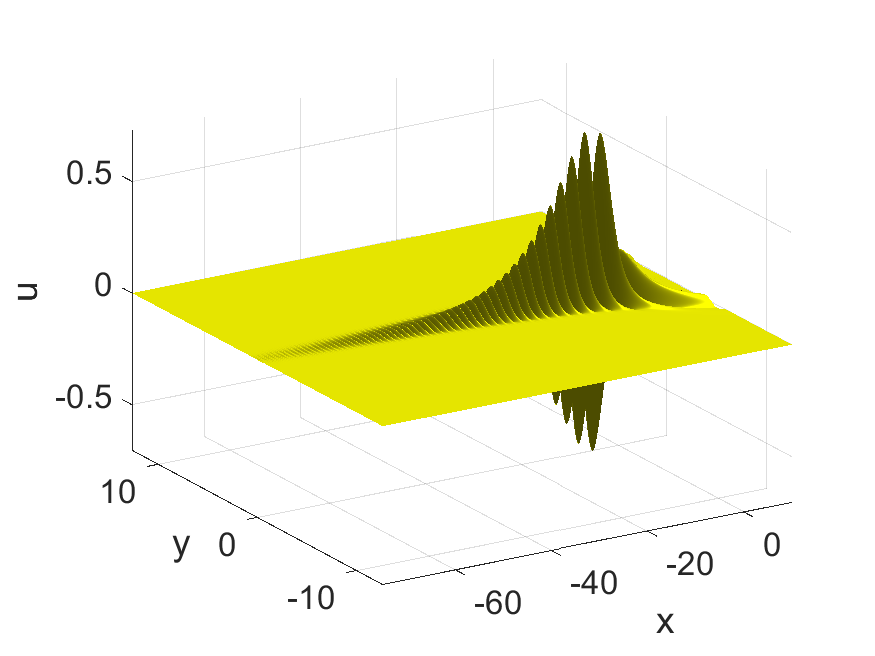}
		\caption{  The close-up look at the profile   $t=0.5$ for the nonlinearity $f(u)=u^2+u^{3/2}$. }\label{u2u32focus}
	\end{figure}
	
	\subsection{The  supercritical and subcritical nonlinearities} 
	Now,   we test our scheme for the generalized KP equation with supercritical and subcritical nonlinearities given by \mbox{$f(u)=-u^2+u^5$} with $\mu_1=-1, \mu_2=1$. The problem is solved on the rectangular region $[-5\pi, 5\pi]\times [-2\pi, 2\pi]$  by taking the initial condition \begin{equation}
		u_0(x,y)=4\left(\ee^ {-({x^2}+{y^2})}\right)_{xx} . \label{initialuminus2u5}
	\end{equation}
	The initial energy is negative such that $E(u_0)=-12918<0$.   Therefore, the conditions for Theorem \ref{blow-up-theorem-1}, part (ii) are fulfilled. 
	Figure \ref{uminus2u5} shows the profile of  $ u $,   $ u_x $ and $ u_y $   and  Fourier coefficients  at the critical time \mbox{$t=3\times 10^{-5}$} for the nonlinearity $f(u)=-u^2+u^5$. Similar to Figure \ref{u5u6}, the gradient of $u$  at the critical time diverges more rapidly than the solution itself. The numerical result indicates that the solution blows up in a finite time. This numerical result is also in complete agreement with the analytical result given in Theorem \ref{blow-up-theorem-1}, part (ii).
	\begin{figure}[!htbp]
		\begin{minipage}[t]{0.45\linewidth}
			\centering
			\includegraphics[height=5.5cm,width=7.5cm]{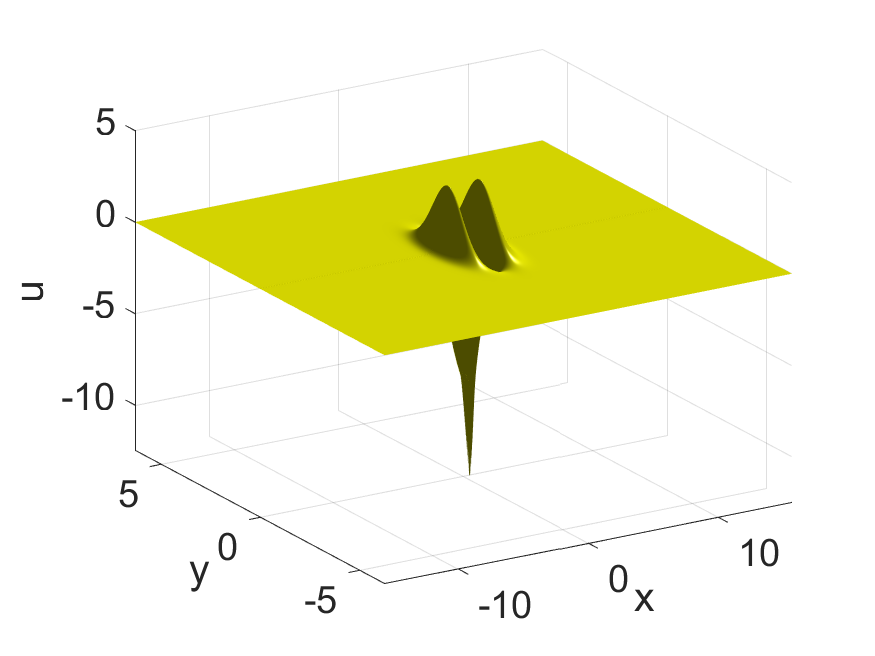}
		\end{minipage}%
		\hspace{20pt}
		\begin{minipage}[t]{0.45\linewidth}
			\centering
			\includegraphics[height=5.5cm,width=7.5cm]{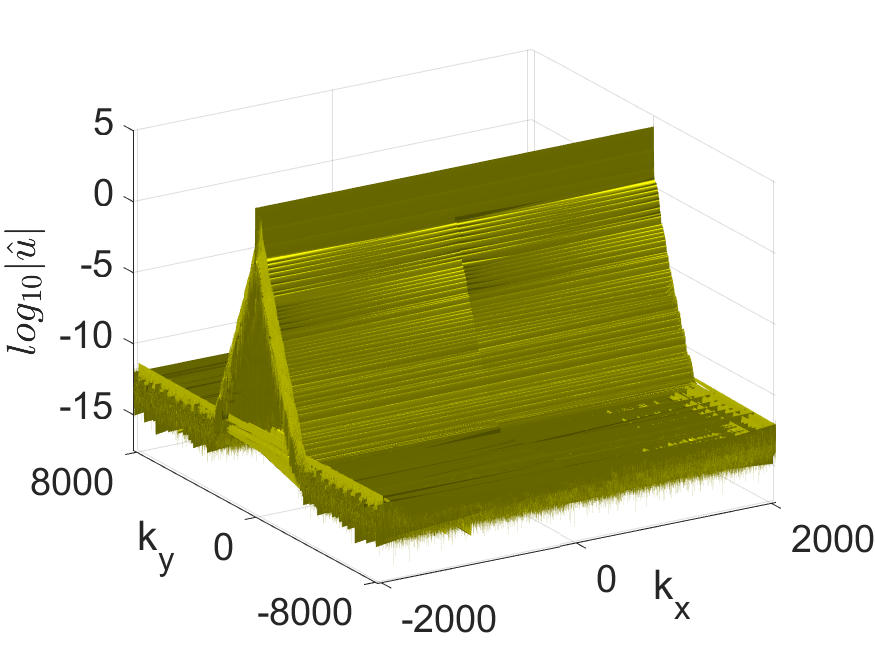}
		\end{minipage}
		\begin{minipage}[t]{0.45\linewidth}
			\centering
			\includegraphics[height=5.5cm,width=7.5cm]{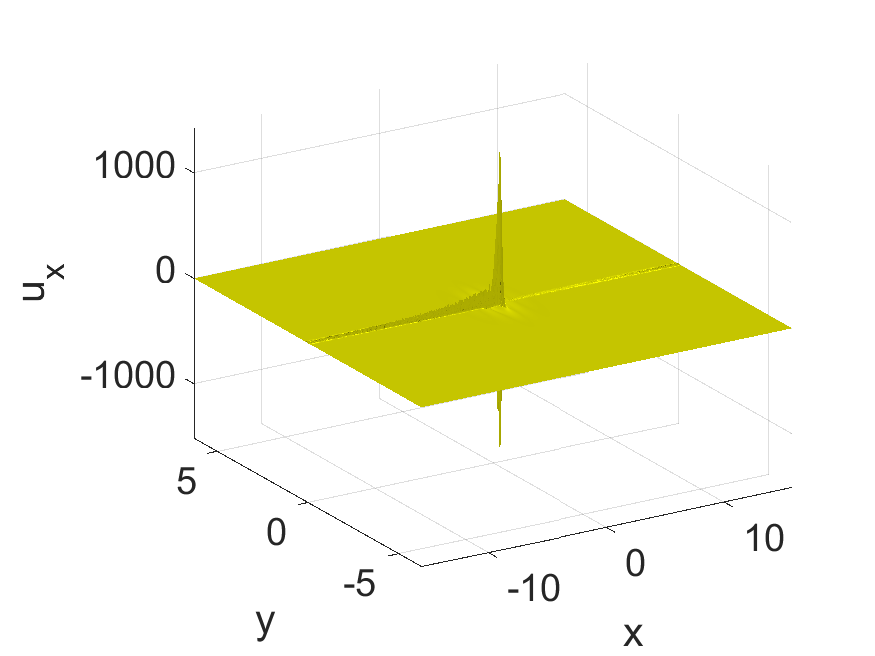}
		\end{minipage}%
		\hspace{40pt}
		\begin{minipage}[t]{0.45\linewidth}
			\centering
			\includegraphics[height=5.5cm,width=7.5cm]{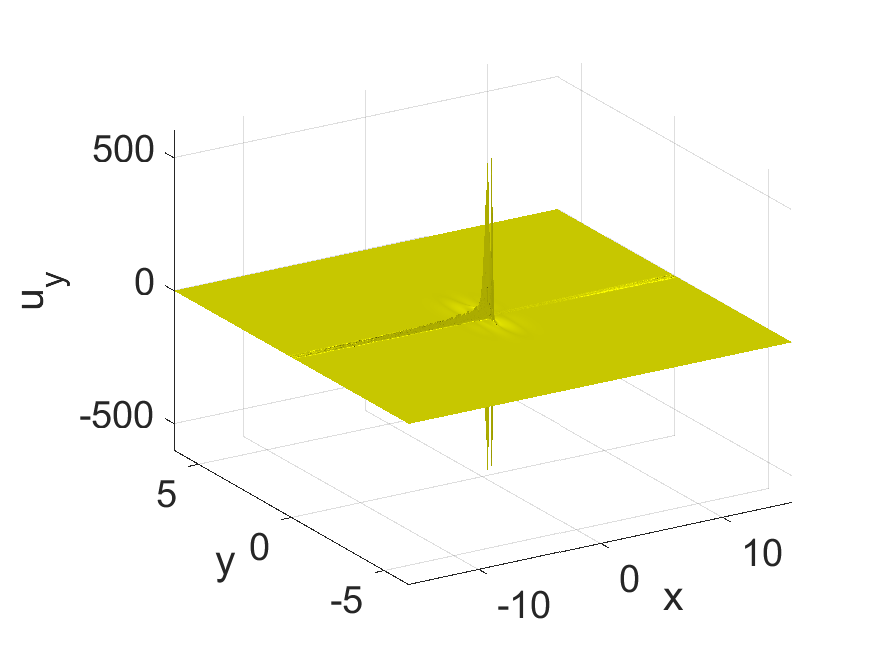}
		\end{minipage}
		\caption{ The profile of  $ u $,   $ u_x $ and $ u_y $  (top left, bottom left, bottom right)  and  Fourier coefficients  at $t=3\times 10^{-5}$ (top right) for the nonlinearity $f(u)=-u^2+u^5$.}\label{uminus2u5}
	\end{figure}

	Now, we focus on the generalized KP equation with different supercritical and subcritical nonlinearities where neither a uniform boundedness result nor a blow-up result is established theoretically. We start with the KP equation with supercritical and subcritical nonlinearities given by $f(u)=u^3-u^2$ with $\mu_1=1, \mu_2=-1$ and \mbox{$f(u)=u^3+u^2$} with $\mu_1=1, \mu_2=1$, respectively.   We choose the initial condition \eqref{initialuminus2u5}
	satisfying $E(u_0)=-359<0$ and  $E(u_0)=-200<0$, corresponding respectively \mbox{$f(u)=u^3-u^2$} and {$f(u)=u^3+u^2$}.
	The numerical experiments are carried out  from $t = 0$ to $t = 0.03$ taking  the number of temporal
	grid points $M = 10000$. The problem is solved on the rectangular region $[-5\pi, 5\pi]\times [-2\pi, 2\pi]$ using ${N_x}=2^{11}$ and ${N_y}=2^{13}$. Figure \ref{u3ueksi2} shows the profile of the numerical solution at different times.  We observe some distortions at two humps and dispersive oscillations in the $x$ direction as time increases. The amplitude of the solution increases very rapidly. Figure  \ref{u3ueksi2} indicates a blow-up.  In Figure \ref{u3u2}, the variation of  $L^{\infty}-$norm of the solution $u$  with time and 
	the profile of the numerical solution near the critical time $t=0.02105$ are depicted for the nonlinearity $f(u)=u^3+u^2$. The numerical results indicate that the solution blows up in a finite time. 
	
	\begin{figure}[!htbp] 
		\begin{minipage}[t]{0.45\linewidth}
			\centering
			\includegraphics[height=5.5cm,width=7.5cm]{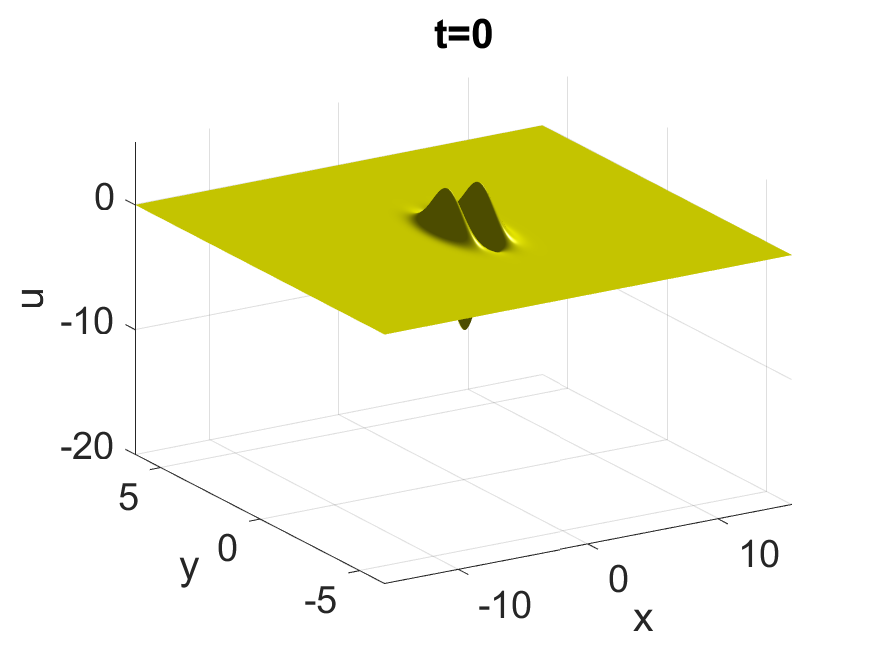}
		\end{minipage}%
		\hspace{20pt}
		\begin{minipage}[t]{0.45\linewidth}
			\centering
			\includegraphics[height=5.5cm,width=7.5cm]{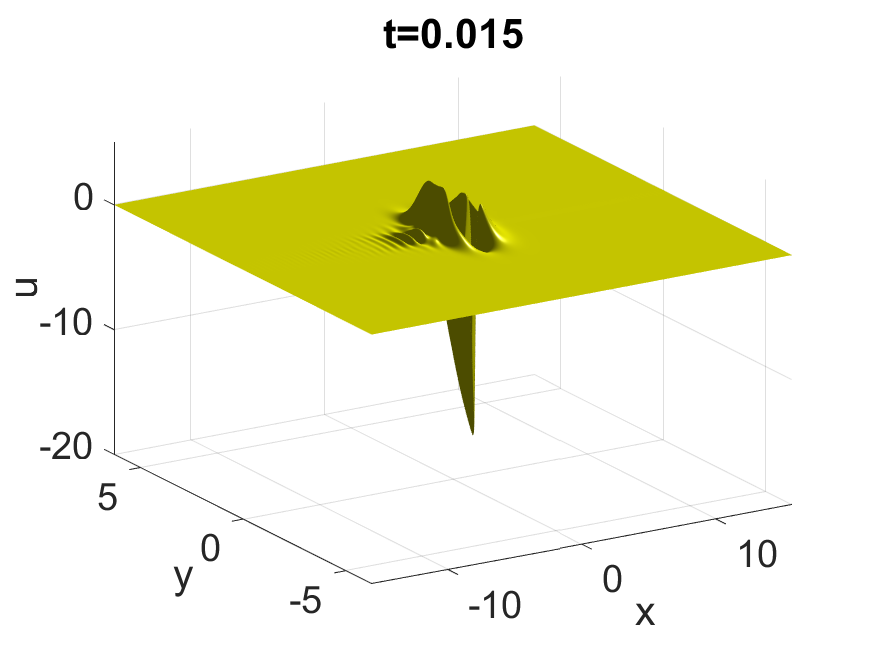}
		\end{minipage}
		\begin{minipage}[t]{0.45\linewidth}
			\centering
			\includegraphics[height=5.5cm,width=7.5cm]{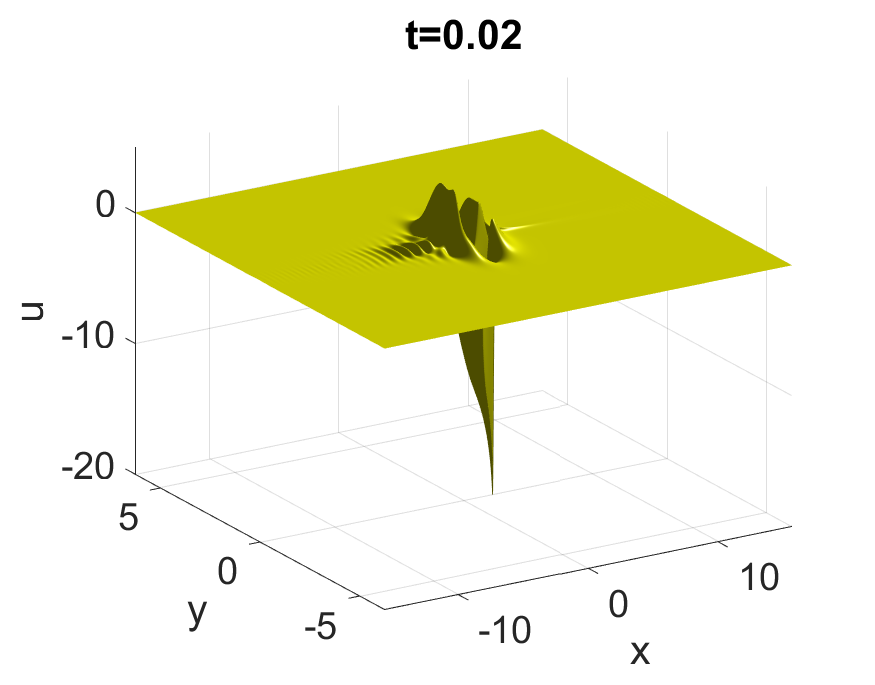}
		\end{minipage}%
		\hspace{40pt}
		\begin{minipage}[t]{0.45\linewidth}
			\centering
			\includegraphics[height=5.5cm,width=7.5cm]{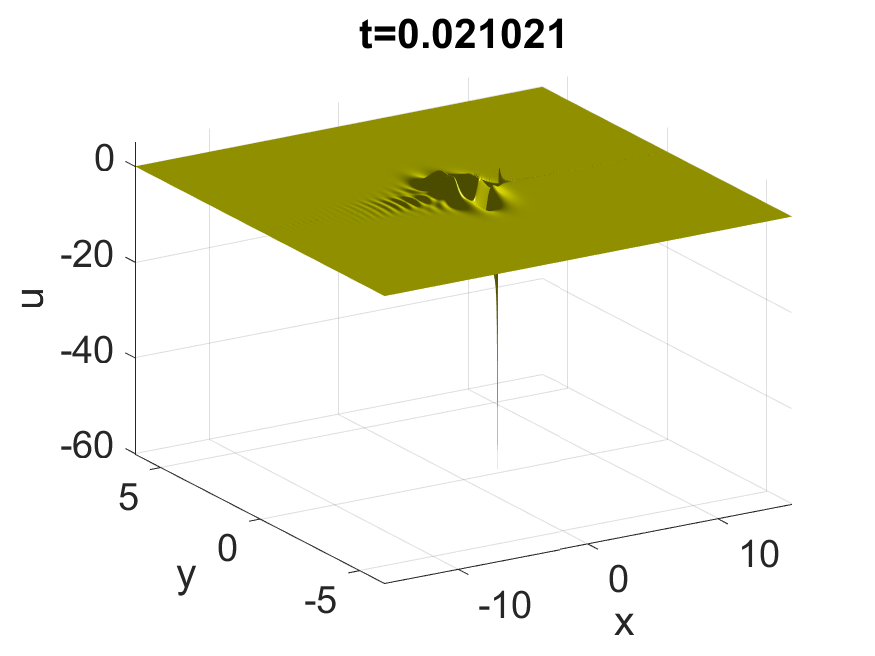}
		\end{minipage}
		\caption{ The variation of $u$ at different times for the nonlinearity $f(u)=u^3-u^2$.  }\label{u3ueksi2}
	\end{figure}
	\begin{figure}[!htbp] 
		\begin{minipage}[t]{0.45\linewidth}
			\centering
			\includegraphics[height=5.5cm,width=7.5cm]{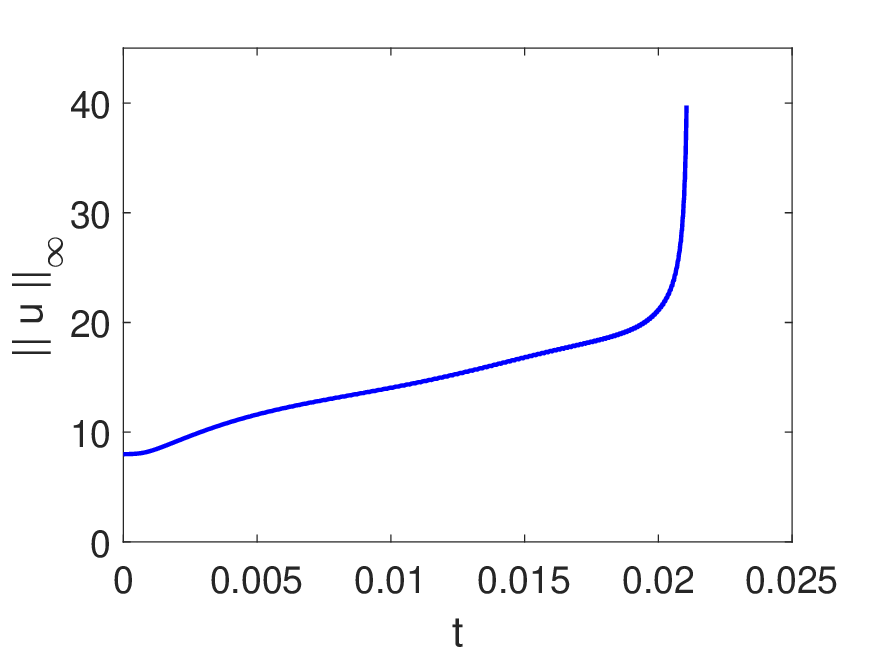}
		\end{minipage}%
		\hspace{20pt}
		\begin{minipage}[t]{0.45\linewidth}
			\centering
			\includegraphics[height=5.5cm,width=7.5cm]{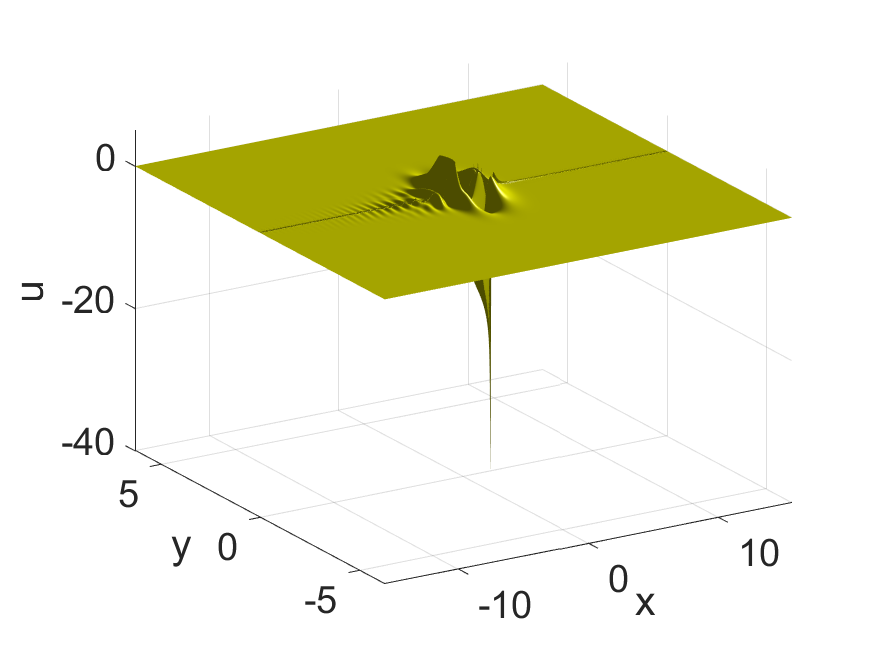}
		\end{minipage}
		
		\caption{ The variation of $||u||_\infty$ with time (left panel)  and  the profile of the numerical solution at $t=0.02105$ (right panel)
			for the nonlinearity $f(u)=u^3+u^2$. }\label{u3u2}
	\end{figure}

		\subsection{The sub-critical and critical nonlinearities case} 
		In the case of combined sub-critical and critical  nonlinearity  $f(u)=-u^2+u^{7/3}$ with $\mu_1=-1, \mu_2=1$,  we  take
		\begin{equation}
			u_0(x,y)={-10}\left(\ee^ {-({x^2}+{y^2})}\right)_{xx} 
		\end{equation}
		satisfying $E(u_0)=-1336<0$.  The experiment is carried out  from $t = 0$ to $t = 0.07$ taking  the number of temporal
		grid points $M = 10000$.
		In the left panel of Figure \ref{u-2u73},
		the variation of $||u||_\infty$ with time is presented. The amplitude of the numerical solution increases as time increases. Figure \ref{u-2u73} indicates that the solution blows up in a finite time.
		The profile of the numerical solution near the blow-up time $t=0.056707$ is depicted in the right panel of Figure \ref{u-2u73}.
		\begin{figure}[!htbp] 
			\begin{minipage}[t]{0.45\linewidth}
				\centering
				\includegraphics[height=5.5cm,width=7.5cm]{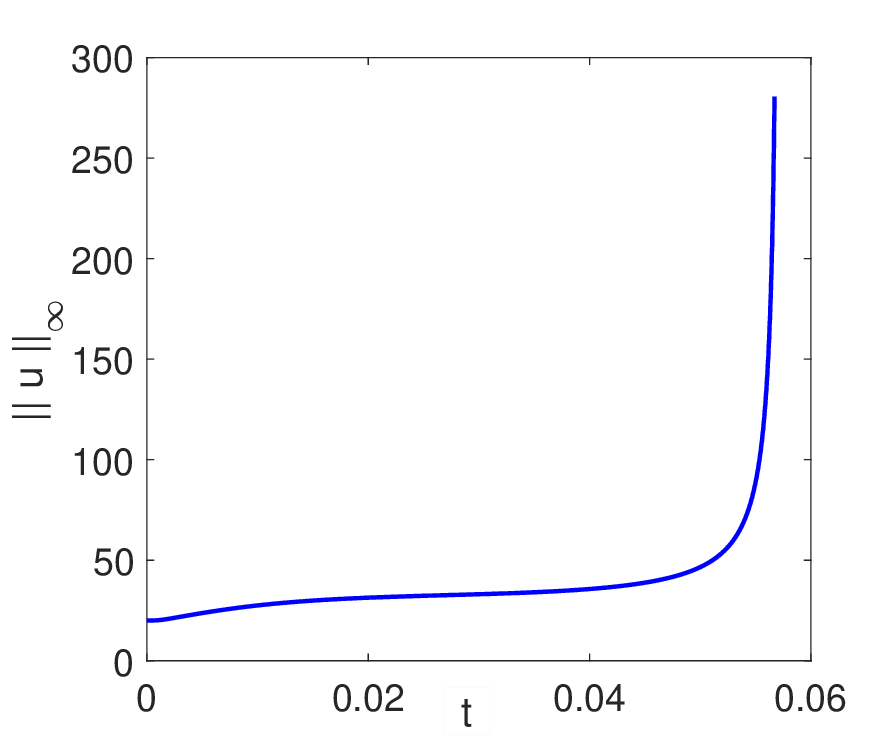}
			\end{minipage}%
			\hspace{40pt}
			\begin{minipage}[t]{0.45\linewidth}
				\centering
				\includegraphics[height=5.5cm,width=7.5cm]{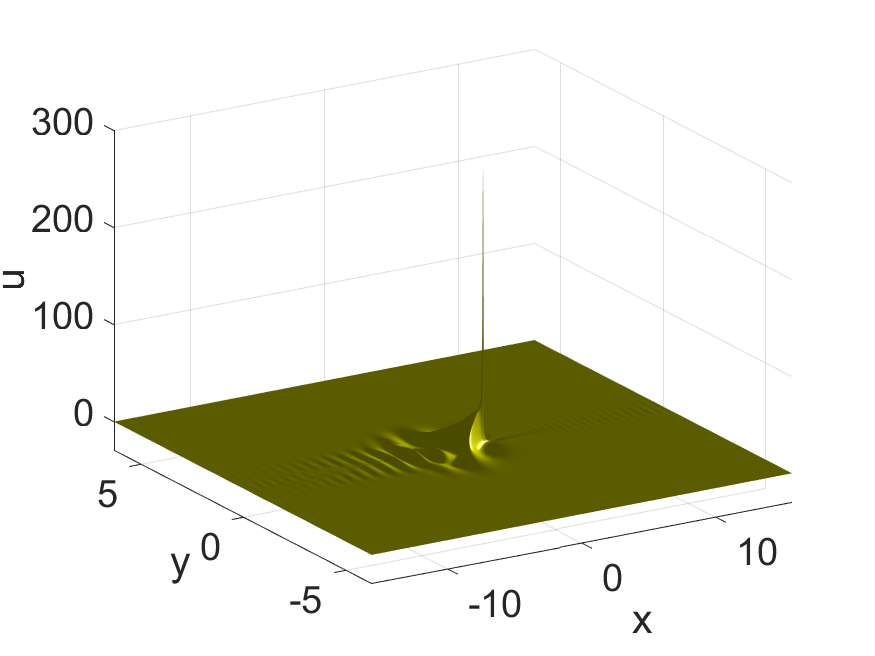}
			\end{minipage}
			\caption{  The variation of  $||u||_\infty$ with time  and  the profile of the numerical solution at  $t=0.056707$ for the nonlinearity $f(u)=-u^2+u^{7/3}$. }\label{u-2u73}
		\end{figure}
		
		
		
		\section*{Acknowledgment}
		
		A. E. is supported by Nazarbayev University under Faculty Development Competitive Research Grants Program for 2023-2025 (grant number 20122022FD4121). G. M. M.  used Computing Resources provided by the National Center for High-Performance Computing of Turkey (UHeM) under grant number 1015922023.
		
		\subsection*{Conflict of interest} The authors declare that they have no conflict of interest. 
		
		\subsection*{Data Availability}
		There is no data in this paper.

	\end{document}